\newcommand{\rI}{{\rm I}}
\newcommand{\cC}{\mathcal{C}}
\newcommand{\cF}{\mathcal{F}}
\newcommand{\cH}{\mathcal{H}}
\newcommand{\cL}{\mathcal{L}}
\newcommand{\cR}{\mathcal{R}}
\newcommand{\cS}{\mathcal{S}}
\newcommand{\cT}{\mathcal{T}}
\newcommand{\cU}{\mathcal{U}}
\newcommand{\fo}{{\mathfrak o}}
\newcommand{\fs}{{\mathfrak s}}
\newcommand{\R}{\mathbb{R}}
\newcommand{\C}{\mathbb{C}}
\newcommand{\su}{\mathfrak{su}}
\newcommand{\so}{\mathfrak{so}}
\renewcommand{\O}{{\rm O}}
\newcommand{\SO}{{\rm SO}}
\newcommand{\Sp}{{\rm Sp}}
\newcommand{\Spin}{{\rm Spin}}
\newcommand{\SU}{{\rm SU}}
\newcommand{\GL}{\mathrm{GL}}
\newcommand{\U}{{\rm U}}
\newcommand{\FR}{\rm Fr}
\renewcommand{\det}{\mathop\mathrm{det}\nolimits}
\newcommand{\End}{{\mathrm{End}}}
\newcommand{\del}{\partial}
\newcommand{\id}{\mathrm{id}}
\newcommand{\im}{\mathop{\mathrm{im}}}
\newcommand{\tr}{\mathop{\mathrm{tr}}\nolimits}
\newcommand{\Sym}{\mathrm{Sym}}
\newcommand{\qandq}{\quad\text{and}\quad}
\newcommand{\qforq}{\quad\text{for}\quad}
\def\<{\mathopen{}\left<}
\def\>{\right>\mathclose{}}
\def\({\mathopen{}\left(}
\def\){\right)\mathclose{}}
\definecolor{gold}{rgb}{0.85,.66,0}
\definecolor{cherry}{rgb}{0.9,.1,.2}
\definecolor{burgundy}{rgb}{0.8,.2,.2}
\definecolor{orangered}{rgb}{0.85,.3,0}
\definecolor{orange}{rgb}{0.85,.4,0}
\definecolor{olive}{rgb}{.45,.4,0}
\definecolor{lime}{rgb}{.6,.9,0}
\definecolor{green}{rgb}{.2,.7,0}
\definecolor{grey}{rgb}{.4,.4,.2}
\definecolor{brown}{rgb}{.4,.3,.1}
\def\om{\omega}
\def\ip{\raise1pt\hbox{\large$\lrcorner\ \!$}} 
\def\w{\wedge}
\newcommand{\ph}{\varphi}
\newcommand{\ps}{\psi}
\newcommand{\wt}{\widetilde}
\newcommand{\wh}{\widehat}
\newcommand{\triv}{\underline}
\newcommand{\ol}{\overline}
\newcommand{\eps}{\epsilon}
\renewcommand{\l}{\ell}
\DeclareMathOperator{\LC}{\mathsf{LC}}
\DeclareMathOperator{\W}{\mathsf{W}}
\DeclareMathOperator{\G}{\mathsf{G}}
\newcommand\tRc{\operatorname{Rc}}
\newcommand{\tRm}{\mathrm{Rm}}
\newcommand{\vol}{\mathsf{vol}}
\DeclareMathOperator\grad{grad}
\DeclareMathOperator\Div{div}
\newcommand{\hk}{\mathbin{\! \hbox{\vrule height0.3pt width5pt depth 0.2pt \vrule height5pt width0.4pt depth 0.2pt}}}
\newcommand{\Gt}{\mathrm{G}_2}
\newcommand{\lot}{\ell ot}
\newcommand{\real}{\mathrm{Re}}
\newcommand{\imag}{\mathrm{Im}}
\begin{document}

\renewcommand{\thepage}{\roman{page}}
\setcounter{page}{1}

\title{BRIDGES Lectures: \\ \, \\ Flows of geometric structures, \\ especially $\Gt$-structures}

\author{Spiro Karigiannis \\ Department of Pure Mathematics \\ University of Waterloo \\ \tt{karigiannis@uwaterloo.ca}}
\date{May 2025}

\maketitle

\newpage

\abstract{The \emph{BRIDGES meeting in gauge theory, extremal structures, and stability} was held in June 2024 at l'Institut d'\'Etudes Scientifiques de Carg\`ese in Corsica, France, organized by Daniele Faenzi, Eveline Legendre, Eric Loubeau, and Henrique S\'a Earp. The first week of the meeting was a summer school which consisted of four independent but related lecture series by Oscar Garc\'ia-Prada, Spiro Karigiannis, Laurent Manivel, and Ruxandra Moraru. The present document consists of notes for the lecture series by Spiro Karigiannis on ``Flows of geometric structures, especially $\Gt$-structures''. Some assistance in the preparation of these notes by the author was provided by several participants of the summer school. Details can be found in the acknowledgements section of the introduction.

The main theme is short time existence (STE) and uniqueness for geometric flows. We first introduce geometric structures on manifolds and geometric flows of such structures. We discuss some qualitative features of geometric flows, and consider the notions of strong and weak parabolicity. We focus on the Ricci flow, explaining carefully the DeTurck trick to establish short-time existence and uniqueness, an argument which we then extend to a general class of geometric flows of Riemannian metrics, previewing similar ideas for flows of $\Gt$-structures.

Finally, we consider geometric flows of $\Gt$-structures. We review the basics of $\Gt$-geometry and survey several different geometric flows of $\Gt$-structures. In particular, we clarify in what sense STE results for the $\Gt$ Laplacian flow differ from STE results for other geometric flows. We conclude with a summary of some recent results by the author with Dwivedi and Gianniotis, including a classification of all possible heat-type flows of $\Gt$-structures, and a sufficient condition for such a flow to admit STE and uniqueness by a modified DeTurck trick.}

\newpage

\dominitoc
\tableofcontents

\clearpage
\renewcommand{\thepage}{\arabic{page}}
\setcounter{page}{1}

\chapter{Introduction}

These lecture notes are a slightly expanded version of the lectures that the author gave in June 2024 at the \emph{BRIDGES meeting in gauge theory, extremal structures, and stability} at the Institut d'\'Etudes Scientifiques de Carg\`ese in Corsica. They are written in an informal style, and are aimed at young mathematicians who are just starting to learn about flows of geometric structures. The main theme is the idea of \emph{short time existence} (STE) and uniqueness, although several other aspects of the theory of geometric flows are briefly discussed in passing.

We begin with an introduction to the notion of a \emph{geometric structure} on a manifold, and of a \emph{geometric flow}. We discuss some qualitative features of geometric flows, and give a precise definition of the principal symbol and the property of strong parabolicity, which most geometric flows \emph{do not} possess. Strongly parabolic geometric flows, such as the harmonic map heat flow, enjoy short-time existence and uniqueness of solutions by standard PDE theory. We then focus on the Ricci flow of Riemannian metrics, and explain carefully the \emph{DeTurck trick} for demonstrating short-time existence and uniqueness by establishing a correspondence between the Ricci flow and a different flow which is strongly parabolic. This idea is then applied to a more general class of geometric flows of Riemannian metrics, as a precursor to similar ideas for flows of $\Gt$-structures.

Finally, we restrict our attention to $\Gt$-structures on a $7$-manifold, and geometric flows of $\Gt$-structures. We review the basic differential geometric properties of $\Gt$-structures, including the relation between curvature and torsion, and give a survey of several different geometric flows of $\Gt$-structures, including the $\Gt$ Laplacian flow. In particular, we clarify in what sense the STE results for the $\Gt$ Laplacian flow differ from the STE results for other geometric flows. We conclude with a summary of some of the results from the recent paper~\cite{DGK-flows2} by the author with Dwivedi and Gianniotis, including a classification of all possible heat-type flows of $\Gt$-structures, and a sufficient condition for such a flow to admit STE and uniqueness by a modified DeTurck trick.

While our bibliographic coverage of flows of $\Gt$-structures is fairly extensive, it is certainly not complete, as it omits much work on flows and solitons of flows on homogeneous manifolds. Moreover, our bibliographic coverage for other geometric flows is very very far from complete.

\textbf{Acknowledgements.} I gratefully acknowledge the hospitality of the Institut d'\'Etudes Scientifiques de Carg\`ese in Corsica, and in particular thank the organizers Daniele Faenzi, Eveline Legendre, Eric Loubeau, and Henrique S\'a Earp for putting together an excellent week of lectures for young researchers. For the preparation of these lecture notes, I am indebted to several participants of the BRIDGES meeting for their very careful proofreading of the manuscript, and for their many useful suggestions and recommendations for improving the clarity of these notes. I gratefully acknowledge Beatrice Brienza, Udhav Fowdar, Alfredo Llosa, Ra\'ul Gonz\'alez Molina, Andr\'es Moreno, Henrique S\'a Earp, and Jakob Stein for their contributions. In particular, much of the material appearing in the exercises for Chapters~\ref{chapter:structures} and~\ref{chapter:G2} was originally prepared by Henrique, Udhav, and Andr\'es for the problem sessions during the week in Corsica, and Jakob was very helpful in debugging the source code to get the exercise pictograms to display correctly. The BRIDGES meeting was supported by the \emph{BRIDGES collaboration: Brazil-France interactions in gauge theory, extremal structures and stability}, grants \#2021/04065-6, S\~{a}o Paulo Research Foundation (FAPESP) and ANR-21-CE40-0017, Agence Nationale de la Recherche (ANR). The author also thanks Anton Iliashenko and Alex Pawelko for spotting some minor errors in earlier versions of these notes.

\textbf{Notation.} Throughout these lectures, we let $M^n$ be a smooth oriented $n$-manifold. We always assume that $M$ is compact and without boundary. Compactness is not required for all results we discuss, but is essential for most of the analytic results so we just assume it throughout. If $x^1, \ldots, x^n$ are local coordinates on $M$, then we write $\partial_i$ for $\frac{\partial}{\partial x^i}$. We write $\partial_t$ for the differential operator $\frac{\partial}{\partial t}$. We use $\mathfrak{X}$ for the space $\Gamma(TM)$ of smooth vector fields, and $\Omega^k$ for the space $\Gamma(\Lambda^k T^* M)$ of smooth $k$-forms. We use $\cT^2$ for the space $\Gamma( T^*M \otimes T^* M)$ of smooth (covariant) $2$-tensors on $M$. We use $\cS^2$ for the space $\Gamma( S^2 T^* M)$ of smooth symmetric $2$-tensors, and $\cS^0$ for those which are trace-free with respect to a fixed metric $g$. In particular we have $\cS^2 = \langle g \rangle \oplus \cS^2_0$ and $\cT^2 = \cS^2 \oplus \Omega^2$, the splittings being (pointwise) orthogonal with respect to $g$. Here $\langle g \rangle = \{ f g : f \in \Omega^0 \}$.

We use $( e_1, \ldots, e_n )$ for the standard ordered basis of $\R^n$, and we always equip $\R^n$ with its standard positive definite inner product and standard orientation, for which $(e_1, \ldots, e_n)$ is an oriented orthonormal basis.

For $\alpha = (\alpha_1, \ldots, \alpha_n)$ a multi-index of nonnegative integers, we set $|\alpha| = \sum_{i = 1}^n \alpha_i$. If $\xi = (\xi_1, \ldots, \xi_n) \in \R^n$ then $\xi^{\alpha} := \xi_1^{\alpha_1} \cdots \xi_n^{\alpha_n}$, and if $f$ is a smooth function then
\begin{equation*}
   \partial^{\alpha} f := \frac{\partial^{|\alpha|}}{(\partial x^1)^{\alpha_1} \cdots (\partial x^n)^{\alpha_n}} f.
\end{equation*}

We use the Einstein convention throughout given a pair of matching upper/lower indices, for expressions involving local coordinates or a general local frame. Given a metric $g$ on $M$, we often, but not always, use the musical isomorphism determined by the metric to identify vector fields with $1$-forms, and more generally to identify type $(k, \l)$ tensors with (covariant) $(k+\l)$-tensors. In particular, whenever we use a local \emph{orthonormal frame} with respect to $g$, then all indices are subscripts, $g_{ij} = \delta_{ij}$, and repeated indices are summed.

Regarding our curvature conventions, if $g$ is a metric on $M$, then in local coordinates we have the Christoffel symbols
\begin{equation} \label{eq:Christoffel}
    \Gamma^k_{ij} = \frac{1}{2} g^{k \l} \left( \frac{\partial g_{i \l}}{\partial x^j} + \frac{\partial g_{j \l}}{\partial x^i} - \frac{\partial g_{i j}}{\partial x^{\l}} \right)
\end{equation}
yielding the Riemann curvature tensor $R_{ijk \l}$, the Ricci tensor $R_{jk}$, and the scalar curvature $R$ via
\begin{equation} \label{eq:curvatures}
\begin{aligned}
    R_{ijk\l} & = g_{m \l} R^m_{ijk} = g_{m \l} \left( \frac{\partial \Gamma^m_{jk}}{\partial x^i} - \frac{\partial \Gamma^m_{ik}}{\partial x^j} + \Gamma^m_{ia} \Gamma^a_{jk} - \Gamma^m_{ja} \Gamma^a_{ik}\right) \\
    & = g(\nabla_{\partial_i}(\nabla_{\partial_j}\partial_k)-\nabla_{\partial_j}(\nabla_{\partial_i}\partial_k), \partial_l), \\
    R_{jk} & = g^{i \l} R_{ijk \l}, \\
    R & = g^{jk} R_{jk}.
\end{aligned}
\end{equation}
We often use the fact that $\partial_m g^{ij} = - g^{ip} g^{jq} \partial_m g_{pq}$ which follows from differentiation of $g^{ij} g_{jk} = \delta^i_k$.

The \emph{divergence} on symmetric $2$-tensors with respect to the metric $g$ is the linear map $\Div \colon \cS^2 \to \Omega^1$ given by
\begin{equation*}
    (\Div h)_k = g^{pq} \nabla_p h_{qk}.  
\end{equation*}
Its formal adjoint is the linear map $\Div^* \colon \Omega^1 \to \cS^2$ given by
\begin{equation} \label{eq:divstar-defn}
X \mapsto \Div^* X = -\frac{1}{2} \cL_X g.
\end{equation}
In terms of a local frame, we have
\begin{equation} \label{eq:divstar}
    (\Div^* X)_{ij} = -\frac{1}{2} (\nabla_i X_j + \nabla_j X_i).   
\end{equation}

We have occasion to use the following ``diamond operator''. Let $\gamma \in \Omega^k$. Consider the linear map $\cT^2 \to \Omega^k$ given by $A \mapsto A \diamond \gamma$, where $A \diamond \gamma = \frac{d}{dt}\big|_{t=0} (e^{tA})^* \gamma$. This is the induced Lie algebra action of $\mathfrak{gl}(n, \R)$ on $\Omega^k$. Explicitly, we have
\begin{equation*}
    (A \diamond \gamma)(X_1, \ldots, X_k) = \gamma(AX_1, X_2, \ldots, X_k) + \gamma(X_1, AX_2, \ldots, X_k) + \cdots + \gamma(X_1, X_2, \ldots, AX_k).
\end{equation*}
In terms of a local orthonormal frame, this is
\begin{equation} \label{eq:diamond-defn}
    (A \diamond \gamma)_{i_1 \cdots i_k} = A_{i_1 m} \gamma_{mi_2 \cdots i_k} + A_{i_2 m} \gamma_{i_1 m i_3 \cdots i_k} + \cdots + A_{i_k m} \gamma_{i_1 \cdots i_{k-1} m}.
\end{equation}

We use $\triv{\R}^k_U$ to denote the trivial $\R^k$ vector bundle over $U$. Thus, if $E$ is an $\R^k$ vector bundle over $M$, then a local trivialization of $E$ over an open set $U \subset M$ is a vector bundle isomorphism $\phi_{U} \colon E|_U \to \triv{\R}^k_U$, which is equivalent to a \emph{local frame} $( s_1, \ldots, s_k )$ for $E$ over $U$ given by $s_j = \phi_{U}^{-1} \circ e_j$, where $e_j$ is the smooth section of $\triv{\R}^k_U$ given by $e_j (p) = (p, e_j)$.

\chapter{Geometric structures and geometric flows} \label{chapter:structures}

In this chapter we study \emph{geometric structures} and \emph{geometric flows} of such structures. So the first thing we need to do is understand what these words mean.

\section{Geometric Structures} \label{sec:structures}

The \emph{frame bundle} $\FR(M)$ of $M$ is the principal $\GL(n, \R)$-bundle of frames of the tangent bundle $TM$. What does this mean? Let $p \in M$, so $T_p M$ is the tangent space to $M$ at $p$. This is an $n$-dimensional real vector space, so the linear isomorphisms $\R^n \cong T_p M$ are in one-to-one correspondence with the set of (ordered) bases of $T_p M$. A \emph{point} in $\FR(M)$ corresponds to a pair $(p, \cF_p)$ where $\cF_p \colon \R^n \to T_p M$ is a linear isomorphism (equivalently, a \emph{frame} or \emph{basis} of $T_p M$) via the correspondence $\cF_p \leftrightarrow (\cF_p e_1, \ldots, \cF_p e_n)$. It is easy to see that $\GL(n, \R)$ acts smoothly on $\FR(M)$ via $A \cdot (p, \cF_p) = (p, \cF_p \circ A)$, and that this action is free and transitive on the fibres. This means that any two isomorphisms $\cF_p \colon \R^n \to T_p M$ and $\wt \cF_p \colon \R^n \to T_p M$ are related by a unique $A \in \GL(n, \R)$ such that $\cF_p = \wt \cF_p \circ A$.

In the language of vector bundles and transition functions, we say that the vector bundle $TM$ has ``structure group'' $\GL(n, \R)$, because we can cover $M$ by open sets $\{ U_{\alpha} \}$ over which we have \emph{local trivializations} $\phi_{\alpha}$ (equivalently, local frames for $TM$) such that the transition functions $\phi_{\beta} \circ \phi_{\alpha}^{-1}$ are vector bundle isomorphisms of $\triv{\R}^n_{U_{\alpha} \cap U_{\beta}}$ given by $p \mapsto (p, g_{\beta \alpha}(p))$ for some smooth map $g_{\beta \alpha} \colon U_{\alpha} \cap U_{\beta} \to \GL(n, \R)$.

If our manifold $M$ is equipped with additional ``structure'', then we can distinguish a \emph{preferred} class of frames at each $p \in M$. This is what we mean by a \emph{geometric structure}. Let's look at several examples.

\begin{ex} \label{ex:On}
Suppose $M$ is equipped with a Riemannian metric $g$. This is a smoothly varying choice of positive definite inner products on the fibres of $TM$. Then at each point $p \in M$, we can consider those frames which are \emph{orthonormal} with respect to the inner product $g_p$ on $T_p M$. This is equivalent to considering those isomorphisms $\cF_p \colon \R^n \to T_p M$ which are \emph{isometries}. Any two such frames are related by a unique element $A \in \O(n)$. Thus we can cover $M$ by open sets over which we have local trivializations of $TM$ where the transition functions lie in the proper subgroup $O(n)$ of $\GL(n, \R)$. We say that we have \emph{reduced the structure group} from $\GL(n, \R)$ to $\O(n)$.

It is a standard fact (using partitions of unity) than any manifold $M$ admits a Riemannian metric. That is, the tangent bundle of any manifold $M^n$ admits a reduction of structure group from $\GL(n, \R)$ to $\O(n)$.
\end{ex}

\begin{ex} \label{ex:GLplus}
Suppose $M$ is equipped with an \emph{orientation}. This is a smoothly varying choice of orientations on the fibres of $TM$. Then at each point $p \in M$, we can consider those frames which are \emph{oriented} with respect to this orientation on $T_p M$. This is equivalent to considering those isomorphisms $\cF_p \colon \R^n \to T_p M$ which are \emph{orientation preserving}. Any two such frames are related by a unique element $A \in \GL^+(n, \R)$. Thus we can cover $M$ by open sets over which we have local trivializations of $TM$ where the transition functions lie in the proper subgroup $\GL^+(n, \R)$ of $\GL(n, \R)$. We say that we have \emph{reduced the structure group} from $\GL(n, \R)$ to $\GL^+(n, \R)$.

In this case, such a reduction of structure group \emph{does not always exist}. In fact, an orientation on $M$ corresponds to an equivalence class $[\mu]$ of nowhere vanishing $n$-forms on $M$, where $\mu \sim \nu$ if and only if $\mu = f \nu$ for some \emph{positive} $f \in C^{\infty} (M)$. Thus an orientation exists if and only if the real line bundle $\Lambda^n (T^* M)$ is \emph{trivial}. This is a \emph{topological obstruction}. Such manifolds are called \emph{orientable}. In terms of this description, the ``oriented'' local frames $( X_1, \ldots, X_n )$ for $TM$ are those whose dual frames $( \alpha_1, \ldots, \alpha_n )$ for $T^* M$ satisfy $\alpha_1 \wedge \cdots \wedge \alpha_n = f \mu$ for some $f > 0$.
\end{ex}

\begin{ex} \label{ex:SOn}
If $M$ is orientable, we can simultaneously perform the reductions in Examples~\ref{ex:On} and~\ref{ex:GLplus} to obtain a reduction of structure group to $\GL^+ (n, \R) \cap \O(n) = \SO(n)$. Such a manifold $(M, g, [\mu])$ is an \emph{oriented Riemannian manifold} and in this case there is a unique representative $\vol$ of the class $[\mu]$ for which $\vol(X_1, \ldots, X_n) = 1$ for any oriented orthonormal frame $(X_1, \ldots, X_n)$, called the \emph{Riemannian volume form}.
\end{ex}

\begin{ex} \label{ex:GLC}
Suppose $M$ is equipped with an \emph{almost complex structure}. This is a smooth section $J \in \Gamma(TM \otimes T^* M) = \Gamma( \End TM )$ satisfying $J^2 = - I$. It allows us to identify each $T_p M$, which is a priori just a real vector space, with a \emph{complex vector space}, where the scalar multiplication by $\sqrt{-1}$ is given by $J_p$. Clearly we must have $n = 2m$ is even for this to have any hope of existing. However, even if $n=2m$, an almost complex structure need not exist. For instance, one can show that $M$ must necessarily be orientable (but this is not in general sufficient). This is because the standard inclusion of $\GL(m, \C)$ in $\GL(2m, \R)$ actually lies in $\GL^+ (2m, \R)$. (See Exercise~\ref{exer:GLmC} at the end of this chapter.) Existence of an almost complex structure is in general a very subtle topological question.

Given an almost complex structure $J$ on $M^{2m}$, at each point $p$ we can consider those frames $(X_1, \ldots, X_{2m})$ which are \emph{compatible} with this complex structure on $T_p M$, in the sense that they are of the form
\begin{equation*}
(X_1, J_p X_1, \ldots, X_m, J_p X_m)  
\end{equation*}
for some frame $(X_1, \ldots, X_m)$ of the \emph{complex} vector space $(T_p M, J_p)$. This is equivalent to considering those isomorphisms $\cF_p \colon \R^{2m} \to T_p M$ which are \emph{complex linear}, where $\R^{2m}$ is viewed as the underlying real vector space of $\C^m$. Any two such frames are related by a unique element $A \in \GL(m, \C)$. Thus we can cover $M$ by open sets over which we have local trivializations of $TM$ where the transition functions lie in the proper subgroup $\GL(m, \C)$ of $\GL(2m, \R)$. We say that we have \emph{reduced the structure group} from $\GL(2m, \R)$ to $\GL(m, \C)$.
\end{ex}

\begin{ex} \label{ex:Um}
Suppose $M^{2m}$ is equipped with an almost complex structure $J$. In particular, $M$ is orientable. Then we can always find a Riemannian metric $g$ on $M$ such that $g(JX, JY) = g(X, Y)$ for all $X, Y \in \Gamma(TM)$. This is done by replacing any metric $\wt{g}$ by $g = \wt g + J^* \wt g$. In this case, $J_p$ is an isometry with respect to $g_p$ for all $p \in M$. It is easy to see that $\omega (X, Y) = g(JX, Y)$ is a $2$-form on $M$, called the \emph{K\"ahler form} of $(M, g, J)$.

Thus given $J$, we can always further reduce to the structure group $\GL(m, \C) \cap \O(2m) = \U(m)$, the \emph{unitary} group of $\C^m$. Such a triple $(g, J, \omega)$ is called a $\U(m)$-structure on $M^{2m}$, and is also called an \emph{almost Hermitian structure}.
\end{ex}

\begin{ex} \label{G2}
A $7$-manifold $M^7$ is said to be equipped with a $\Gt$-structure if we can reduce the structure group from $\GL(7, \R)$ to the exceptional $14$-dimensional Lie group $\Gt \subset \SO(7)$. In particular, such a manifold also has both a metric and an orientation. It turns out that a $\Gt$-structure is completely encoded by a special kind of ``nondegenerate'' $3$-form $\ph$ on $M$. On a given $7$-manifold, a $\Gt$-structure exists if and only if $M$ is both orientable and \emph{spinnable}. (We do not assume the reader is familiar with spin geometry, so you don't need to know what this means.) A $\Gt$-structure on $M^7$ is essentially a smoothly varying identification of the tangent spaces of $M$ with the \emph{imaginary octonions}, together with their induced algebraic structure of the \emph{cross product} in seven dimensions, inner product, and orientation.
\end{ex}

We have seen that for a subgroup $G \subset \GL (n, \R)$, a $G$-structure on $M^n$ corresponds to a smoothly varying choice of point in each fibre of the frame bundle $\FR(M)$, subject to the fact that $(p,\mathcal{F}_p)$ is identified with $(p, \mathcal{F}_p \circ A)$ for any $A \in G$. Hence, specifying a $G$-structure is equivalent to choosing a smooth section of the quotient  $\FR(M)/G$, which is a fibre bundle over $M$ with typical fibre $\GL (n, \R) / G$. The fact that a $G$-structure may not exist on $M$ corresponds to the fact that the fibre fundle $\FR(M)/G$ may not have any global smooth sections.

We have now seen many examples of ``geometric structures'' on a manifold. Let $G$ be a Lie subgroup of $\GL(n, \R)$. When a manifold $M^n$ admits a $G$-structure, it usually admits \emph{many} $G$-structures. A natural question is: ``What is the \emph{best} $G$-structure on $M$?'' Of course, the answer depends entirely on what we mean by \emph{best}, which can depend on the context. Some examples:
\begin{itemize}
    \item A ``best'' Riemannian metric $g$ might be a metric with \emph{constant sectional curvature}. We know the topology of any manifold which admits a constant sectional curvature metric. Topologically, it must be a discrete quotient of either $S^n$ or $\R^n$, by the Killing--Hopf Theorem. Thus most manifolds cannot admit such ``best'' metrics, so this condition is too strong.
    
    At the other extreme, one can consider \emph{constant scalar curvature}. By the celebrated \emph{Yamabe problem}, solved by Rick Schoen, if $M$ is compact then any conformal class of metrics admits a constant scalar curvature representative. (See Lee--Parker~\cite{Lee-Parker} for an excellent survey.) So this condition is too weak.

    The natural intermediate condition is for the metric to be \emph{Einstein}. This means that $\tRc = \lambda g$ for some constant $\lambda$, and can be thought of as a type of ``constant Ricci curvature'' metric since $g$ is parallel.

    Note that each of these natural curvature conditions is a \emph{second order nonlinear partial differential equation} on the metric $g$.

    \item A ``best'' almost complex structure $J$ might be an \emph{integrable} almost complex structure, which means that its \emph{Nijenhuis tensor} $N$ vanishes. This is a $TM$-valued $2$-form on $M$ given by
    \begin{equation} \label{eq:Nijenhuis}
    N(X, Y) = [X, Y] + J[JX,Y] + J[X,JY] - [JX,JY].
    \end{equation}
    The celebrated Newlander-Nirenberg Theorem says that $J$ is integrable ($N=0$) if and only if there exists a \emph{holomorphic atlas} on $M$ inducing $J$, making $(M, J)$ into a \emph{holomorphic manifold}. (Most authors call this a ``complex manifold''.) This is a (nonlinear) first order PDE on $J$, but since there is no metric in this story, this particular example does not fit as easily into the same framework as the other examples. In this context it is worth pointing out that there are known examples of compact $4$-manifolds which admit almost complex structures but \emph{cannot} admit integrable complex structures. However, in dimension $2m \geq 6$, this is still an open question. Indeed, the most famous case is that of $S^6$. The $6$-sphere admits a ``canonical'' \emph{non-integrable} $J$, and it is unknown whether or not it admits an integrable $J$. It is a theorem~\cite{LeBrun} of Claude LeBrun that such a hypothetical integrable $J$ on $S^6$ cannot be compatible with the standard round metric.

    \item A ``best'' $\U(m)$-structure $(g, J, \omega)$ on $M^{2m}$ could be defined to be one for which $\nabla J = 0$, or equivalently $\nabla \omega = 0$. Such a manifold $(M, g, J, \omega)$ is called a \emph{K\"ahler} manifold, and the holonomy of its metric $g$ lies in $\U(m)$. Classically, and in algebraic geometry, it is more common to see the equivalent definition of K\"ahlerity as: $N = 0$ and $d \omega = 0$. More generally, for $m \geq 3$, the tensor $\nabla \omega$, called the \emph{torsion} of the $\U(m)$-structure, decomposes into four independent components under the Lie algebra action of $\U(m)$, and each of these four components could be zero or nonzero. This gives $2^4 = 16$ distinct ``classess'' of $\U(m)$-structure, called the Gray--Hervella classes. The most interesting non-K\"ahler $\U(m)$-structure is the case when $\nabla \omega$ is totally skew-symmetric, and hence equals $3 d\omega$. These are called \emph{nearly K\"ahler} manifolds, and when $2m=6$ they are intimately related to $\Gt$-structures.
    
    The curvature of a K\"ahler metric is significantly constrained, but there exist nevertheless a veritable zoo of distinct K\"ahler manifolds. Indeed, the Fubini-Study metric on complex projective space $\C \mathbb{P}^m$ is K\"ahler, and since a holomorphic submanifold of a K\"ahler manifold itself inherits a K\"ahler structure by restriction, it follows that any \emph{projective variety} has a natural K\"ahler structure. It thus makes sense to impose further restrictions, such as asking for a K\"ahler metric which is \emph{also} Einstein, naturally called a K\"ahler-Einstein metric. In particular, when the Einstein constant $\lambda = 0$, so $g$ is K\"ahler and Ricci-flat, then $g$ is called a \emph{Calabi--Yau} metric, and its holonomy is contained in $\SU(m)$. The \emph{Calabi-Yau Theorem} gives necessary and sufficient conditions for a compact K\"ahler  manifold $(M, g, J, \omega)$ to admit a Ricci-flat K\"ahler metric $\tilde g$ whose associated K\"ahler form $\tilde \omega ( \cdot, \cdot ) = \tilde g (J \cdot, \cdot)$ lies in the same cohomology class as $\omega$. The reader is referred to textbooks on K\"ahler geometry, such as Huybrechts~\cite{Huybrechts} or Moroianu~\cite{Moroianu}. (See the exercises at the end of this chapter for a study of $\SU(2)$-structures and their torsion.)

    \item Similarly we will see that if $\ph$ is a $\Gt$-structure on $M^7$, then $\nabla \ph$, called the \emph{torsion} of the $\Gt$-structure, decomposes into four independent components, so there are also $16$ classes of $\Gt$-structures. When $\nabla \ph = 0$, we say that $\ph$ is \emph{torsion-free}.
\end{itemize}

Let $\alpha$ be a covariant $k$-tensor corresponding to a $G$-structure on $M$, where $G \subset \SO(n)$. Then the \emph{torsion} of $\alpha$ is defined to be $\nabla \alpha$, and we denote it by $T$. Then we have $\nabla_X \alpha = T_X$, so $\nabla_X (\nabla_Y \alpha) = \nabla_X (T_Y) = (\nabla_X T)_Y + T_{\nabla_X Y}$. It follows that
\begin{equation*}
    R^{\nabla}(X, Y) \alpha = \nabla_X (\nabla_Y \alpha) - \nabla_Y (\nabla_X \alpha) - \nabla_{[X, Y]} \alpha = (\nabla_X T)_Y - (\nabla_Y T)_X.
\end{equation*}
Such a relationship between the Riemann curvature $R^{\nabla}$ of $g$ and the torsion $T$ of the $G$-structure $\alpha$ is sometimes called a \emph{$G$-Bianchi identity}, because it can also be proved by considering the linearization of the diffeomorphism invariance of the torsion of the $G$-structure, in the same way that the classical Bianchi identities of Riemannian geometry arise from the diffeomorphism invariance of the curvature. (See Chow--Knopf~\cite[Chapter 3, Sec.\ 2.2]{Chow-Knopf}.)

\begin{remark} \label{rmk:general-torsion}
If $G \subset \SO(n)$, we can give an equivalent definition of the torsion of a $G$-structure, as follows. The metric gives us the identification $\Lambda^2 (\R^n) = \fs\fo(n)$, so from the Lie algebra inclusion $\mathfrak{g} \subset \mathfrak{so}(n)$ we get an orthogonal decomposition $\Lambda^2 (\R^n) = \mathfrak{g} \oplus \mathfrak{g}^{\perp}$. Let $v_1, \ldots, v_n$ be a local orthonormal frame. The Levi-Civita connection $\nabla$ corresponds to an $\mathfrak{so}(n)$-valued $1$-form $A$ via $\nabla v_i = A_{ij} v_j$. The fact that $A$ is skew-symmetric follows from $\nabla g = 0$, since 
\begin{equation*}
0 = \nabla (g(v_j, v_k)) = g(\nabla v_j, v_k) + g(v_j, \nabla v_k) = g( A_{jl} v_l, v_k) + g(v_j, A_{kl} v_l) = A_{jk} + A_{kj}. 
\end{equation*}
Using the decomposition of $\mathfrak{so}(n) = \mathfrak{g} \oplus \mathfrak{g}^{\perp}$, we can write $A = \widehat{A} + T$, where $\widehat{A}$ is a $\mathfrak{g}$-valued $1$-form and $T$ a $\mathfrak{g}^\perp$-valued $1$-form. This splitting is in fact global. That is, $\nabla = \widehat{\nabla} + T$, where $\widehat{\nabla}$ is the projection of the Levi-Civita connection to $\mathfrak{g}$. This is sometimes called the canonical $G$-connection, although in~\cite{DGK-flows2} it is called the \emph{optimal} $G$-connection. The tensor $T$ is called the \emph{intrinsic torsion} of the $G$-structure.

Suppose $G$ is precisely the stabilizer of a finite set of differential forms $\gamma_1, \ldots, \gamma_N$. Since $P^* \gamma_a = \gamma_a$ for all $P \in G$, it follows that $\widehat{A} \diamond \gamma_a = 0$ in the notation of~\eqref{eq:diamond-defn}, so $\widehat{\nabla} \gamma_a = 0$. In particular, we have
\begin{equation*}
\nabla_X \gamma_a = \widehat{\nabla}_X \gamma_a + T(X) \diamond \gamma_a = T(X) \diamond \gamma_a.
\end{equation*}
The map $B \mapsto (B \diamond \gamma_1, \ldots, B \diamond \gamma_N)$ is injective on $\mathfrak{g}^{\perp}$ by construction, as its kernel is exactly $\mathfrak{g}$. It follows that $\nabla \gamma_a = 0$ for all $1 \leq a \leq N$ if and only if $T = 0$. Thus the intrinsic torsion as defined here is essentially equivalent to the torsion in the examples of $G$-structures discussed above. (For example, $N=1$ for both $G = \U(m)$ and $G = \Gt$, where we have $\gamma_1 = \omega$ or $\gamma_1 = \ph$, respectively.)
\end{remark}

\section{Geometric Flows} \label{sec:flows}

We have seen what a geometric structure is, which from now on will be a $G$-structure for some $G \subset \SO(n)$, and we have briefly discussed how some $G$-structures might be considered ``better'' than others. How do we find a ``best'' $G$-structure on $M$? There are two approaches, which can (very informally and somewhat imprecisely) be referred to as the \emph{elliptic} and \emph{parabolic} approaches. These are roughly as follows:
\begin{itemize}
    \item In the \emph{elliptic} approach, we attempt to directly solve the equation that determines our notion of ``best'' $G$-structure. This is usually a second order nonlinear PDE for the unknown $G$-structure $\alpha$. In best cases, it is an \emph{elliptic} equation and many tools from geometric analysis are available to study such problems, including: Sobolev and H\"older spaces, elliptic regularity, the Fredholm alternative, and the continuity method. Some examples of results of this flavour (not all stated in their full generality or completely precisely) include the following (these will not necessarily all make sense to you):
    \begin{itemize}
        \item The resolution of the Yamabe problem: on a compact manifold, any conformal class of Riemannian metrics admits a representative with constant scalar curvature. (See Aubin~\cite{Aubin} or Lee--Parker~\cite{Lee-Parker}, for example.)
        \item The Calabi--Yau Theorem: on a compact K\"ahler manifold $M$, each K\"ahler class admits a unique Ricci-flat K\"ahler metric if and only if the first Chern class of $M$ vanishes. (See Yau~\cite{Yau}, Aubin~\cite{Aubin}, or Joyce~\cite{Joyce} for more details.)
        \item The Donaldson--Uhlenbeck--Yau Theorem: let $E$ be a holomorphic vector bundle over a compact K\"ahler manifold. Then $E$ admits a Hermitian--Einstein connection if and only if $E$ is (semi-)stable. (See Donaldson--Kronheimer~\cite{DK}, for example.)
        \item Given a closed $\Gt$-structure $\ph$ on $M^7$, which is sufficiently close to torsion-free, then there exists a unique torsion-free $\Gt$-structure $\wt\ph$ in the same cohomology class and close to $\ph$. (See Joyce~\cite{Joyce}, for example.)
    \end{itemize}
    \item In the \emph{parabolic} approach, we start with a $G$-structure $\alpha$ that does \emph{not} satisfy our ``best'' criterion, and attempt to \emph{evolve or flow} it in the space of $G$-structures, with the hope that it constantly gets \emph{closer and closer} to being ``best''. This is usually a (weakly) parabolic nonlinear PDE which qualitatively exhibits \emph{diffusive phenomena} similar to the standard heat equation. This essentially means that it tends to spread out and dissipate any deviations from ``best''. Typical tools that are used to study such problems include: the maximum principle, Harnack inequalities, Shi-type estimates, and soliton solutions to model formation of singularities. (We will not make most of these ideas more precise in these lectures.) Some well-known examples of geometric flows which have been studied, together with some suggested references for further reading, are:
    \begin{itemize}
        \item The harmonic map heat flow~\cite{Eells-Sampson} of smooth maps between Riemannian manifolds.
        \item The Ricci flow~\cite{Chow-Knopf, CLN, Topping} of Riemannian metrics.
        \item The mean curvature flow~\cite{Zhu} (or Lagrangian mean curvature flow) of submanifolds.
        \item Various curvature flows in Hermitian geometry, such as the K\"ahler--Ricci flow~\cite{Cao}, the Hermitian curvature flow, and the anomaly flow.
        \item The Yang--Mills gradient flow~\cite{DK} of connections.
        \item The $\Gt$ Laplacian flow~\cite{Lotay-survey} of $\Gt$-structures.
    \end{itemize}
\end{itemize}
The goal of these lectures is to present the reader with a gentle introduction to some of the ideas involved in flowing $G$-structures on manifolds, using flows of Riemannian metrics (especially the Ricci flow) and flows of $\Gt$-structures as the main examples. The main question we focus on is the following:
\begin{itemize}
    \item Does the flow have short-time existence (STE) and uniqueness? This will lead us to consider the \emph{DeTurck trick} for proving STE for the Ricci flow and some other flows.
\end{itemize}
There are many other questions about geometric flows that are of supreme importance, which we discuss only in passing. Such questions include:
\begin{itemize}
    \item Does the flow have long-time existence (LTE)? For most (but not all) geometric flows this is in general \emph{not} the case. That is, \emph{most geometric flows develop singularities in finite time}.
    \item What \emph{characterizes} finite-time singularities? That is, what is happening geometrically to the manifold as we approach the singular time $\tau$? Usually some scalar quantity which is a norm of some tensor, is blowing up to infinity.
    \item Can we say more about singularities? We can \emph{blow up} at the singular point and time. That is, we ``zoom in'' by rescaling in space and translating in time. (This is not to be confused with blowing up in algebraic geometry.) Then in many cases, what we observe is a \emph{soliton} solution to the flow, which is a solution that is evolving by rescaling and diffeomorphisms. Thus, knowledge about existence and properties of soliton solutions tells us something about possible singularity formation. 
    \item If the flow has LTE, does it converge? The flow may exist for all time, but that does not mean that it gets closer and closer to a limit as $t \to \infty$.
    \item Does the flow exhibit \emph{stability}? That is, if we know for some reason that there exists a ``best'' $G$-structure $\ol{\alpha}$ sufficiently close to $\alpha_0$, will the flow starting at $\alpha_0$ exist for all time and converge to $\ol{\alpha}$, or at least to a ``best'' $G$-structure somehow equivalent to $\ol{\alpha}$, such as in the same diffeomorphism orbit?
\end{itemize}

\section{Some qualitative discussion of flows} \label{sec:qualitative}

Most reasonable geometric flows exhibit diffusive type qualitative behaviour, similar to the classical \emph{heat equation} on the Euclidean space $\R^n$, which is
\begin{equation*}
    \partial_t u = \Delta u.
\end{equation*}
Here $u \colon \R^n \times [0, \infty) \to \R$, and $\Delta u = \sum_{k=1}^n \frac{\partial^2 u}{\partial x_k \partial x_k}$ is the \emph{Laplacian} operator.

Let's try to qualitatively understand why the heat equation tends to \emph{diffuse} the function $u$, by looking at the case $n=1$. In this case the heat equation is $u_t = u_{xx}$. If $p \in \R$ is a point at which $u$ attains a local minimum in space, then $u_{xx} (p) \geq 0$, so $u_t(p) \geq 0$ and $u(p)$ will \emph{increase} in time. Similarly if $q \in \R$ is a point at which $u$ attains a local maximum in space, then $u_{xx} (q) \leq 0$, so $u_t(q) \leq 0$ and $u(q)$ will \emph{decrease} in time. Thus, the function $u(\cdot, t)$ should approach the average value of the initial function $u(\cdot, 0)$ as $t \to \infty$. This can of course be made rigourous.

Suppose $F \colon \R^k \to \R^k$ is a smooth function. We know from the theory of \emph{ordinary differential equations} that, given an initial condition $u(0) = u_0$, then \emph{there exists a unique solution} $u \colon [0, \infty)] \to \R^k$ to the first order ODE $\frac{\partial}{\partial t} u = F(u(t))$, at least for some short time $t \in [0, \eps)$. This is sometimes called the Picard--Lindel\"of Theorem, or the \emph{fundamental theorem of ODE}. Moreover, when $F$ is \emph{linear}, then we get \emph{existence for all time}. That is, we can take $\eps = \infty$. It is absolutely crucial, however, that \emph{in general long time existence fails for nonlinear equations}. The most elementary example is $\frac{\partial}{\partial t} u = u^2$, which has solution $u(t) = \frac{u_0}{1 - u_0 t}$, which exists for all time if $u_0 \leq 0$ but only for $t \in [0, u_0)$ if $u_0 > 0$. We say that $u$ \emph{blows up in finite time}. This is very important is because
\begin{equation*}
    \text{\emph{most natural geometric flows are \emph{nonlinear}, and hence in general do not have long time existence.}}
\end{equation*}
Moreover, the reason most natural geometric flows are nonlinear is because they usually depend on some notion of \emph{curvature}, and in general the various notions of curvature that arise in geometry are nonlinear second order partial differential expressions.

Thus consider, for example, the nonlinear PDE on $\R^1$ given by
\begin{equation*}
    u_t = u_{xx} + u^2,
\end{equation*}
which is a perturbation of the usual heat equation by the addition of a ``nonlinear reaction term''. Analyzing as above, at a point $p \in \R$ where $u$ attains a local min, we have $u_{xx} (p) \geq 0$ and of course $u(p)^2 \geq 0$ as well, so at such points $u(p)$ will increase in time. However, at a point $q \in \R$ where $u$ attains a local max, we have $u_{xx} (q) \leq 0$ but $u(q)^2 \geq 0$. So it is not clear what happens at such a point. It depend on ``which of the two terms wins the fight''.

\section{The ``heat equation'' on Riemannian manifolds} \label{sec:heat}

Before looking at other geometric flows, let's first consider the generalization of the classical heat equation to an arbitrary Riemannian manifold $(M, g)$. There are \emph{many} such generalizations, which schematically often take the form
\begin{equation*}
    \partial_t u = \Delta u,
\end{equation*}
where $u$ is some kind of geometric object (usually a section of some tensor bundle), and $\Delta$ is some kind of \emph{Laplacian}. There are many notions of Laplacian.

\begin{ex} \label{ex:LB}
The \emph{Laplace-Beltrami operator} on functions is given by
\begin{equation*}
   \Delta u = \Div (\grad u),
\end{equation*}
where $\grad u$ is the \emph{gradient} of the function $u$, which is the vector field metric dual to the $1$-form $du$, and $\Div X$ is the \emph{divergence} of the vector field $X$, which is the function determined by $\cL_X \vol = (\Div X) \vol$. If $g_{ij}$ are the components of the metric tensor with respect to some local coordinates $x^1 \ldots, x^n$, with $|g| = \det (g_{ij})$, then $\grad u = (du)^{\sharp} = g^{ij} \frac{\partial u}{\partial x^i} \frac{\partial}{\partial x^j}$ and $\Div X = \frac{1}{\sqrt{|g|}} \frac{\partial}{\partial x^i} \left( \sqrt{|g|} X^j \right)$, so
\begin{equation} \label{eq:LB-coords1}
    \Delta u = \Div (\grad u) = \frac{1}{\sqrt{|g|}} \frac{\partial}{\partial x^i} \left( \sqrt{|g|} \frac{\partial u}{\partial x^j} \right).
\end{equation}
Using the Chrisfoffel symbols $\Gamma^k_{ij}$ for $g$ from~\eqref{eq:Christoffel}, equation~\eqref{eq:LB-coords1} can also be written in the useful form
\begin{equation} \label{eq:LB-coords2}
    \Delta u = g^{ij} \left( \frac{\partial^2 u}{\partial x^i \partial x^j} - \Gamma^k_{ij} \frac{\partial u}{\partial x^k} \right).
\end{equation}
Note that in Riemannian normal coordinates centred at a point $p \in M$, we have $g_{ij}(p) = \delta_{ij}$,  $\frac{\partial g_{ij}}{\partial x^k}(p) = 0$, and $\Gamma^k_{ij} (p) = 0$, from which it follows from either expression above that $(\Delta u)(p) = \sum_{k=1}^n \frac{\partial u}{\partial x^k \partial x^k} (p)$ as for the Euclidean metric on $\R^n$.
\end{ex}

\begin{ex} \label{ex:AL}
Let $E$ be a vector bundle on $M$ equipped with a fibre metric $h$, and let $\nabla$ be a connection on $E$ compatible with $h$. The \emph{connection Laplacian} or \emph{rough Laplacian} or \emph{analyst's Laplacian} $\Delta$ on sections of $E$ is given by
\begin{equation*}
    \Delta = - \nabla^* \nabla,
\end{equation*}
where $\nabla^* \colon \Gamma(T^* M \otimes E) \to \Gamma (E)$ is the formal adjoint of $\nabla \colon \Gamma (E) \to \Gamma(T^* M \otimes E)$. In terms of a local frame $v_1, \ldots, v_n$ of $TM$, we have
\begin{equation} \label{eq:AL}
    \Delta s = - \nabla^* \nabla s = g^{ij} \nabla_i \nabla_j s
\end{equation}
where $\nabla_i := \nabla_{v_i}$. (Note that many authors use $\nabla^* \nabla$, which is a \emph{positive} operator, when they say rough Laplacian or connection Laplacian. But everyone agrees that the ``analyst's Laplacian'' has a minus sign, which is a \emph{negative} operator. The reader must always be careful to identify the sign convention for a Laplacian, similar to the problem of differing sign conventions for the Riemann curvature tensor.)
    
We remark that when $E$ is the trivial real line bundle over $M$, then the rough Laplacian equals the Laplace-Beltrami operator.
\end{ex}

\begin{ex} \label{ex:HL}
Let $d \colon \Omega^k \to \Omega^{k+1}$ be the exterior derivative, and let $d^* \colon \Omega^k \to \Omega^{k-1}$ be its formal adjoint. (One can show that $d^* = (-1)^{nk+n+1} \star d \star$ on $\Omega^k$, where $\star$ is the Hodge star operator.) The \emph{Hodge Laplacian} $\Delta_d$ is defined to be
\begin{equation*}
   \Delta_d = d d^* + d^* d.
\end{equation*}
The map $\Delta_d \colon \Omega^k\to \Omega^k$ is linear and formally self-adjoint. A $k$-form $\alpha$ is called \emph{harmonic} if $\Delta_d \alpha = 0$. It is easy to see that on a compact manifod, $\alpha$ is harmonic if and only if $d \alpha = 0$ and $d^* \alpha = 0$.

The classical \emph{Weitzenb\"ock formula} reveals that
\begin{equation} \label{eq:BW}
   \Delta_d = \nabla^* \nabla + \cR, 
\end{equation}
where $\cR \colon \Omega^k (M) \to \Omega^k (M)$ is a linear operator constructed from the Riemann curvature tensor of $g$ which is \emph{algebraic}, in the sense that it does not involve any differentiation. When $k=0$ this curvature term is zero, and thus on $0$-forms (which are functions) we have $\Delta_d = \nabla^* \nabla$ is the negative of the Laplace-Beltrami operator or analyst's Laplacian.
\end{ex}

There are many other Laplacians, such as the \emph{Lichnerowicz Laplacian} on symmetric $2$-tensors (which will play a role later in these lectures), and the \emph{Dirac Laplacian} associated to the Dirac operator on a spinor bundle, just to name a few.

One of the most fundamental results in Riemannian geometry is the classical \emph{Hodge Theorem}. Indeed, it is the author's opinion that the Hodge Theorem is the best introduction to \emph{geometric analysis}, especially since there are both \emph{elliptic and parabolic} proofs of this theorem. The precise statement is:
\begin{theorem}[The Hodge Theorem]
Let $\cH^k = \ker (\Delta_d|_{\Omega^k})$ be the vector space of harmonic $k$-forms on $M$. There is an $L^2$-orthogonal decomposition
\begin{equation*}
    \Omega^k = \cH^k \oplus d (\Omega^{k-1}) \oplus d^* (\Omega^{k+1}).
\end{equation*}
Moreover, we have
\begin{align*}
    \cH^k = \ker (d|_{\Omega^k}) \cap \ker (d^*|_{\Omega^k}), \quad \ker (d|_{\Omega^k}) = \cH^k \oplus d (\Omega^{k-1}), \quad \ker (d^*|_{\Omega^k}) = \cH^k \oplus d^* (\Omega^{k+1}).
\end{align*}
It follows that every de Rham cohomology class in $H^k (M, \R)$ has a \emph{unique harmonic representative} and that there is an isomorphism $\cH^k \cong H^k (M, \R)$ given by $\alpha \mapsto [\alpha]$.
\end{theorem}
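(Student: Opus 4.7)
The plan is to prove the theorem in two stages: first establish the algebraic/easy parts (pairwise $L^2$-orthogonality of the three summands, and the equivalent characterization of harmonicity), and then invoke the analytic heart of the argument (closed range and Fredholm property of $\Delta_d$) to obtain the decomposition itself. All the topological content — that harmonic representatives are unique and exhaust cohomology — will then fall out essentially formally.

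First I would prove pairwise $L^2$-orthogonality of $\cH^k$, $d(\Omega^{k-1})$, and $d^*(\Omega^{k+1})$ by integration by parts on the compact boundaryless manifold $M$. Since $d$ and $d^*$ are formal $L^2$-adjoints, for $\alpha \in \cH^k$, $\beta \in \Omega^{k-1}$, and $\gamma \in \Omega^{k+1}$ we have $\langle \alpha, d\beta \rangle = \langle d^* \alpha, \beta \rangle = 0$ and $\langle \alpha, d^* \gamma \rangle = \langle d \alpha, \gamma \rangle = 0$, using that harmonic forms satisfy $d\alpha = d^*\alpha = 0$; and $\langle d\beta, d^*\gamma \rangle = \langle d^2 \beta, \gamma \rangle = 0$ since $d^2 = 0$. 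The equivalence $\Delta_d \alpha = 0 \iff d\alpha = 0 \text{ and } d^*\alpha = 0$ on a compact manifold follows from the identity $\langle \Delta_d \alpha, \alpha\rangle = \|d\alpha\|_{L^2}^2 + \|d^*\alpha\|_{L^2}^2$.

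The key analytic step is to show that $\Delta_d \colon \Omega^k \to \Omega^k$ is Fredholm with closed image, and that the $L^2$-orthogonal complement of $\cH^k = \ker \Delta_d$ is precisely $\im \Delta_d$. This is the standard \emph{elliptic package}: from the Weitzenb\"ock formula~\eqref{eq:BW} one reads off that $\Delta_d$ is a second-order formally self-adjoint elliptic operator with principal symbol $|\xi|^2 \id$, so on a compact manifold one obtains G\aa rding-type elliptic estimates $\|\alpha\|_{H^{s+2}} \leq C(\|\Delta_d \alpha\|_{H^s} + \|\alpha\|_{H^s})$ on Sobolev completions of $\Omega^k$. Standard Fredholm theory then gives that $\ker \Delta_d$ is finite-dimensional, the image is closed, and formal self-adjointness identifies $(\im \Delta_d)^{\perp} = \ker \Delta_d = \cH^k$. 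Elliptic regularity ensures all weak solutions are smooth, so everything descends to smooth forms. Consequently every $\omega \in \Omega^k$ may be written as $\omega = \omega_H + \Delta_d \eta$ for some smooth $\eta$, and $\Delta_d \eta = d(d^* \eta) + d^*(d \eta)$ yields the claimed three-fold orthogonal decomposition. The refined identities $\ker(d|_{\Omega^k}) = \cH^k \oplus d(\Omega^{k-1})$ and $\ker(d^*|_{\Omega^k}) = \cH^k \oplus d^*(\Omega^{k+1})$ then follow: decomposing $\omega = \omega_H + d\mu + d^*\nu$ and imposing $d\omega = 0$ gives $d(d^*\nu) = 0$, whence $\|d^*\nu\|^2 = \langle d^*\nu, d^*\nu \rangle = \langle \nu, d d^*\nu\rangle = 0$, forcing $d^*\nu = 0$; the dual argument handles the $d^*$-kernel.

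For the cohomological consequence, given a closed $\alpha \in \Omega^k$, decompose $\alpha = \alpha_H + d\mu + d^*\nu$; the argument just given forces $d^*\nu = 0$, so $\alpha - \alpha_H = d\mu$ and hence $[\alpha] = [\alpha_H]$. Uniqueness is immediate from orthogonality: if $\alpha_H - \alpha_H' = d\mu$ with both terms harmonic, then $\|\alpha_H - \alpha_H'\|_{L^2}^2 = \langle \alpha_H - \alpha_H', d\mu\rangle = 0$ since $\cH^k \perp d(\Omega^{k-1})$. Thus $\alpha \mapsto [\alpha]$ is a linear bijection $\cH^k \to H^k(M, \R)$. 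The main obstacle in this plan is the analytic step: establishing closed range and the Fredholm alternative for the elliptic operator $\Delta_d$ on a compact manifold. Everything else is formal manipulation with $d$, $d^*$, and integration by parts once that step is in hand. In the spirit of the rest of these notes, one could alternatively obtain the harmonic projection via the \emph{parabolic} approach, running the heat equation $\partial_t \alpha = -\Delta_d \alpha$ and showing $\alpha(t)$ converges as $t \to \infty$ to the harmonic part of $\alpha(0)$; but the elliptic route above is the more direct proof.
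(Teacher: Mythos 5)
Your proposal is correct, but it takes the \emph{elliptic} route, whereas the proof sketched in these notes is deliberately the \emph{parabolic} one. Your argument is the standard elliptic package: formal $L^2$-orthogonality and the identity $\langle \Delta_d \alpha, \alpha \rangle = \|d\alpha\|_{L^2}^2 + \|d^*\alpha\|_{L^2}^2$ reduce everything to the one hard analytic fact that $\Delta_d$ is a formally self-adjoint elliptic operator on a compact manifold, hence Fredholm with closed range, $(\im \Delta_d)^{\perp} = \ker \Delta_d = \cH^k$, and with smooth solutions by elliptic regularity; writing $\omega = \omega_H + \Delta_d \eta = \omega_H + d(d^*\eta) + d^*(d\eta)$ then gives the decomposition, and your integration-by-parts arguments for the refined kernels, for existence of harmonic representatives, and for uniqueness are all correct. (This is essentially the proof in Warner, cited in the notes; one tiny quibble: with the symbol convention of Definition~\ref{defn:symbol} and Example~\ref{ex:symbol-Lap}, the principal symbol of $\Delta_d = \nabla^*\nabla + \cR$ is $-|\xi|^2 \, \rI$ rather than $+|\xi|^2 \, \rI$, which of course does not affect ellipticity.) The notes instead sketch the proof via the flow~\eqref{eq:heat-forms}: one shows the linear heat equation $\partial_t \alpha = -\Delta_d \alpha$ has long time existence and convergence, that $[\alpha_t]$ is constant when $\alpha_0$ is closed, and that $\alpha(t) \to$ the harmonic representative as $t \to \infty$. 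The trade-off is the point of the comparison: your elliptic argument is shorter, more self-contained, and immediately yields finite-dimensionality of $\cH^k$, while the parabolic argument (which you mention only as an aside) is the one the lectures care about, since it exhibits the harmonic representative as the limit of a geometric flow and illustrates why \emph{linearity} of the flow is what buys long time existence — the moral that the rest of the notes build on when the Laplacian is allowed to depend on the evolving structure.
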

\begin{proof}[Idea of proof]
See Warner~\cite{Warner} for a proof using elliptic methods. For our purposes, we are more interested in a proof using parabolic methods (that is, via a geometric flow.) Such a proof can be found in Rosenberg~\cite{Rosenberg}. The idea is to consider the equation
\begin{equation} \label{eq:heat-forms}
    \partial_t \alpha = - \Delta_d \alpha,
\end{equation}
which is a type of \emph{heat equation} on forms. From the Weitzenb\"ock formula~\eqref{eq:BW}, the right hand side of the above equation equals, up to lower order terms, the analyst's Laplacian $- \nabla^* \nabla$, so we should expect this equation to behave qualitatively like the classical heat equation.

One shows that~\eqref{eq:heat-forms} has \emph{long time existence and convergence}, and that if the initial $k$-form $\alpha_0$ is closed, then $[\alpha_t] = [\alpha_0]$ for all $t \geq 0$. That is, the de Rham cohomology class is preserved. In the limit as $t \to \infty$, the solution $\alpha(t)$ converges to the unique harmonic $k$-form in the class $[\alpha_0]$.
\end{proof}

We remark that the Hodge Theorem gives a proof that the de Rham cohomology of a compact orientable manifold is finite-dimensional, by using the fact that the kernel of an elliptic operator on a compact manifold is always finite-dimensional.

While the above situation is not an example of a $G$-structure, the Hodge Theorem \emph{does} give an excellent example of a situation where we ask for a ``best'' type of geometric object. In this case, given a de Rham cohomology class in $H^k (M, \R)$, we ask for the ``best representative'', which given a choice of metric $g$, is the unique \emph{harmonic} representative. One can also show easily that the harmonic representative has the smallest $L^2$-norm amongst all representatives.

It is worth making the following important remark about the geometric flow approach to the Hodge Theorem: the equation~\eqref{eq:heat-forms} is a \emph{linear} partial differential equation. Our metric $g$ is fixed, and the Laplacian is determined by $g$. This is (morally, at least) why we can get long time existence. By contrast, we will see for example that the Ricci flow can be thought of as a type of heat flow, where the Laplacian is changing in time, because it depends on the (evolving) metric $g(t)$. In this sense the Ricci flow (and most geometric flows) are \emph{nonlinear}, and thus as we remarked above, one should not expect to have long time existence in general. (Although it does sometimes happen, as we will discuss later.)

\section{Principal symbols of linear differential operators} \label{sec:symbols}

As we discussed earlier, if we want to use a \emph{geometric evolution equation} or \emph{geometric flow} to evolve some geometric object (such as a $G$-structure) into (ideally) a ``best'' one, then a natural type of equation is one which is a ``heat type'' of equation. Before we can make this precise, we need to discuss \emph{linear differential operators} and their \emph{principal symbols}.

Let $E, \wt{E}$ be real vector bundles of rank $k, \tilde k$ respectively over the Riemannian manifold $(M, g)$, and suppose $E, \wt E$ are both equipped with fibre metrics.

\begin{definition} \label{defn:LDO}
Let $P \colon \Gamma(E) \to \Gamma(\wt{E})$ be a \emph{linear} map. We say that $P$ is a \emph{differential operator} of order $m \geq 0$ if, whenever $\{ s_1, \ldots, s_k \}$ is a local frame for $E$, $( \tilde s_1, \ldots, \tilde s_{\tilde k} )$ is a local frame for $\wt E$, and $x^1, \ldots, x^n$ are local coordinates for $M$, we have
\begin{equation} \label{eq:LDO}
    P ( f^b s_b ) = \sum_{|\alpha| \leq m} P_{\alpha b}^c \partial^{\alpha} f^b \tilde s_c,
\end{equation}
where $1 \leq b \leq k$ and $1 \leq c \leq \tilde k$. Here the $P_{\alpha b}^c$ are smooth functions of $x^1, \ldots, x^n$ such that at least one $P_{\alpha b}^c$ with $|\alpha| = m$ is nonzero. (It is an easy exercise to verify that this characterization is independent of the choices of local coordinates on $M$ and local frame on $E$.)
\end{definition}

\begin{definition} \label{defn:symbol}
Let $P \colon \Gamma(E) \to \Gamma(\wt{E})$ be a linear differential operator of order $m$ as in Definition~\ref{defn:LDO}. Let $\xi = \xi_i dx^i \in T^*_p M \setminus \{ 0 \}$. The \emph{principal symbol} $\sigma_m (P) (\xi)$ of $P$ at $\xi$ is the linear endomorphism of the fibre $E_p$ given by
\begin{equation} \label{eq:symbol}
    \big( \sigma_m (P) (\xi) \big) (f^b s_b) = \sum_{|\alpha| = m} P_{\alpha b}^c \xi_1^{\alpha_1} \cdots \xi_n^{\alpha_n} f^b \tilde s_c.
\end{equation}
That is, in $P$ we replace the operator $\partial_i$ with multiplication by $\xi_i$, and keep only the highest order terms where $|\alpha| = m$. It is a good exercise using the chain rule to verify that this is well-defined, independent of the choice of local coordinates on $M$ and a local frame for $E$. For this it is \emph{crucial} that we keep only the highest order terms. The same ``operation'' on the full expression of $P$ is \emph{not} well-defined. This is an illustration of the fact that the highest order terms in a linear differential operator are (morally) the only terms that contribute to the qualitative behaviour of the differential equation $P s = 0$ or the evolution equation $\partial_t s = Ps$.
\end{definition}

Many authors use the convention (which comes from the relation to the Fourier transform) that we replace $\partial_i$ with $\sqrt{-1} \xi_i$. We won't do this in these lectures. Also note that one can express the notion of being a linear partial differential operator of order $m$ in terms of an arbitrary connection $\nabla$ on $E$, by replacing all the $\partial_i$ in~\eqref{eq:LDO} by $\nabla_i := \nabla_{\partial_i}$. It is easy to see that this is equivalent and gives the same notion of principal symbol.

\begin{remark} \label{rmk:symbol-coordinate-free}
It is possible to give a coordinate-free definition of the principal symbol, as follows. Given $\xi \in T^*_p M \setminus \{ 0 \}$, let $\eta \in C^{\infty} (M)$ be such that $(d \eta)_p = \xi$. Then it is an easy exercise to show that at the point $p$, we have
\begin{equation*}
    \lim_{r \to \infty} r^{-m} e^{-r \eta} P( e^{r \eta} s) = (\sigma_m (P) (\xi)) s,
\end{equation*}
independent of the choice of $\eta$ with this property.
\end{remark}

\begin{remark} \label{rmk:symbol-comp}
Suppose $P_1 \colon \Gamma(E_1) \to \Gamma(E_2)$ and $P_2 \colon \Gamma(E_2) \to \Gamma(E_3)$ are linear differential operators of orders $m_1$, $m_2$ respectively. Then it is easy to check, using the chain rule, that $P_2 \circ P_1 \colon \Gamma(E_1) \to \Gamma(E_3)$ is a linear differential operator of order \emph{at most} $m_1 + m_2$, and moreover that
\begin{equation} \label{eq:symbol-comp}
    \sigma_{m_1 + m_2}(P_2 \circ P_1) (\xi) = \sigma_{m_2} (P_2) (\xi) \circ \sigma_{m_1} (P_1) (\xi).
\end{equation}
That is, \emph{the principal symbol of the composition is the composition of the principal symbols}. Equation~\eqref{eq:symbol-comp} still holds when $P_2 \circ P_1$ is of lower order than $m_1 + m_2$, in the sense that the right hand side of~\eqref{eq:symbol-comp} must vanish. This observation is crucial for understanding \emph{diffeomorphism invariance} of geometric flows in Section~\ref{sec:diff-inv}.
\end{remark}

\section{Strongly parabolic flows} \label{sec:parabolic}

In this section we make precise what we mean by a ``heat type'' equation (or \emph{strongly parabolic} equation). More details can be found in Topping~\cite{Topping}.

\begin{definition} \label{defn:parabolic}
Let $P \colon \Gamma(E) \to \Gamma(E)$ be a linear \emph{second order} differential operator. The evolution equation
\begin{equation*}
   \partial_t s = Ps 
\end{equation*}
is called \emph{strongly parabolic} (sometimes also called \emph{strictly parabolic} or just \emph{parabolic}) if and only if there exists $C > 0$ such that
\begin{equation} \label{eq:parabolic}
    \langle \sigma_2 (P) (\xi) s, s \rangle \geq C \, |\xi|^2 \, |s|^2
\end{equation}
for all $\xi \in \Omega^1$ and all $s \in \Gamma(E)$. In particular, this says that for every $p \in M$ and every $\xi \in T_p^* M \setminus \{ 0 \}$, the principal symbol $\sigma_2 (P) (\xi)$ has the property that its self-adjoint part is positive, and  that this positivity holds \emph{uniformly} on $M$.
\end{definition}

\begin{ex} \label{ex:symbol-Lap}
Let $P = - \nabla^* \nabla$ be the analyst's Laplacian on $\Gamma(E)$ of Example~\ref{ex:AL}. Then
\begin{equation*}
    \sigma_2 (P) (\xi) = g^{ij} \xi_i \xi_j \rI = |\xi|^2 \rI,
\end{equation*}
where $\rI$ is the identity operator. Thus~\eqref{eq:parabolic} is satisfied with $C = 1$, so $\partial_t s = - \nabla^* \nabla s$ is a parabolic equation. By the Weitzenb\"ock formula~\eqref{eq:BW}, the same is true of the heat flow~\eqref{eq:heat-forms} on forms which is used to prove the Hodge Theorem.
\end{ex}

We now remove the overly restrictive assumption of \emph{linearity}, but we require $P$ to at least be \emph{quasilinear}, which means it is linear in the highest order derivatives. The precise statement is the following.

\begin{definition} \label{defn:QLDO}
Let $\cU$ be an open set in $\Gamma(E)$ with respect to the $C^{\infty}$ topology, and let $P \colon \cU \to \Gamma(E)$ be a map. Let $s \in \Gamma(E)$. We say that $P$ is a \emph{nonlinear but quasilinear} differential operator of order $m \geq 0$ if, whenever $\{ s_1, \ldots, s_k \}$ is a local frame for $E$ and $x^1, \ldots, x^n$ are local coordinates for $M$, we have
\begin{equation} \label{eq:QLDO}
    P ( f^b s_b ) = \sum_{|\alpha| = m} P_{\alpha b}^c \partial^{\alpha} f^b s_c + Q^b s_b,
\end{equation}
where the $P_{\alpha b}^c$ and $Q^b$ are smooth functions of $x^1, \ldots, x^n$ \emph{and all partial derivatives of $f^1, \ldots, f^k$ up to order $m-1$}, such that at least one $P_{\alpha b}^c$ is nonzero. (Again, it is easy to check that this characterization is independent of the choices of local coordinates on $M$ and local frame on $E$.)
\end{definition}

\begin{ex} \label{ex:curvature-QL}
By~\eqref{eq:curvatures}, the Riemann curvature tensor, Ricci curvature tensor, and scalar curvature tensor are all examples of second order nonlinear but \emph{quasilinear} differential operators on the space of Riemannian metrics on $M$, which is an open set $\cU$ in $\Gamma (S^2 (T^* M))$ with respect to the $C^{\infty}$ topology~\cite{GMM}.
\end{ex}

Let $P \colon \cU \to \Gamma(E)$ be a \emph{nonlinear but quasilinear} differential operator of order $m$. The \emph{linearization} of $P$ at $s \in \cU$, which is a \emph{linear} map $D_s P \colon \Gamma(E) \to \Gamma(E)$, is defined to be
\begin{equation*}
(D_s P) u = \left. \frac{d}{d \eps} \right|_{\eps=0} P(\gamma(\eps))
\end{equation*}
for \emph{any} smooth curve $\gamma$ in $\cU$ satisfying $\gamma(0) = s$ and $\gamma'(0) = u$. (Often, but not always, it is convenient to take $\gamma(\eps) = s + \eps u$.) Let us consider three examples which will be important later.

\begin{ex} \label{ex:linearization-Ricci}
Let $\cU$ denote the open subset (with respect to the $C^{\infty}$ topology) of $\cS^2 = \Gamma(S^2 (T^*M ))$ consisting of the positive definite sections. (That is, $\cU$ is the space of Riemannian metrics on $M$.) The map $\tRc \colon \cU \to \cS^2$ that sends a metric $g$ to its Ricci tensor $\tRc_g$ is a second order nonlinear but quasilinear operator, by Example~\ref{ex:curvature-QL}. It is a good exercise to compute that its \emph{linearization} $D_g \tRc \colon \cS^2 \to \cS^2$ at $g$ is given by
\begin{equation} \label{eq:linearization-Ricci}
    ((D_g \tRc)(h))_{jk} = \frac{1}{2} g^{ab} ( \nabla_a \nabla_j h_{bk} + \nabla_a \nabla_k h_{bj} - \nabla_a \nabla_b h_{jk} - \nabla_j \nabla_k h_{ab} ).
\end{equation}
(See~\cite[Lemma 3.5]{Chow-Knopf}, for example.)
\end{ex}

\begin{ex} \label{ex:linearixation-scalar}
Let $\cU$ be as in Example~\ref{ex:linearization-Ricci}. The map $R g \colon \cU \to \cS^2$ that sends a metric $g$ to $Rg$, where $R$ is its scalar curvature, is a second order nonlinear but quasilinear operator,  by Example~\ref{ex:curvature-QL}. Let $g_{\eps}$ be a smooth curve in $\cU$ with $g_0 = g$ and $\frac{d}{d\eps}|_{\eps = 0} g_t = h$. Using~\eqref{eq:linearization-Ricci} we compute that
\begin{align*}
    (D_g (Rg))(h) & = \left. \frac{d}{d\eps} \right|_{\eps = 0} g_t^{ij} (\tRc_{g_t})_{ij} g_t \\
    & = - g^{ik} h_{k\l} g^{lm} (\tRc_g)_{ij} g + g^{ij} ((D_g \tRc)(h))_{ij} g + g^{ij} (\tRc_g)_{ij} h \\
    & = - \langle h, \tRc_g \rangle g + \tr_g ((D_g \tRc)(h) ) g + R h \\
    & = - \langle h, \tRc_g \rangle g + g^{ab} g^{jk} (\nabla_a \nabla_j h_{bk} - \nabla_a \nabla_b h_{jk}) g + R h,
\end{align*}
which further simplifies to
\begin{equation} \label{eq:linearization-scalar}
    (D_g (Rg))(h) = - \Delta (\tr h) g + \Div (\Div h) g - \langle h, \tRc \rangle g + R h.
\end{equation}
\end{ex}

\begin{ex} \label{ez:linearization-LC}
Let $\cU$ be as in Example~\ref{ex:linearization-Ricci}. In this example, let $\nabla^0$ be the Levi-Civita connection of $g_0 \in \cU$. Recall that the difference of two connections is an $\End (TM)$-valued $1$-form. Consider the map $\LC^0 \colon \cU \to \Gamma(T^* M \otimes \End (TM))$ given by $\LC^0(g) = \nabla - \nabla^0$. It is not hard to see from~\eqref{eq:Christoffel} that this is a first order nonlinear but quasilinear differential operator, and that its linearization at $g$ is given by
\begin{equation} \label{eq:linearization-LC}
    ((D_g \LC^0) (h))^k_{ij} = \frac{1}{2} g^{k\l} (\nabla_i h_{j\l} + \nabla_j h_{i\l} - \nabla_{\l} h_{ij}).
\end{equation}
The tensorial equation~\eqref{eq:linearization-LC} can be most easily obtained from~\eqref{eq:Christoffel} by using Riemannian normal coordinates for $g$ at a point, so that covariant differentiation and partial differentiation agree at that point. Note that the linearization $D_g \LC^0$ is independent of the choice of ``reference metric'' $g_0$, as expected.
\end{ex}

\begin{definition} \label{defn:QL-parabolic}
Let $P$ be a nonlinear but quasilinear \emph{second order} differential operator on some open set $\cU$ in $\Gamma(E)$. The evolution equation
\begin{equation*}
   \partial_t s = P s 
\end{equation*}
is called \emph{strongly parabolic at $s \in \cU$} if and only if the linearized evolution equation $\partial_t u = (D_s P) u$ is strongly parabolic in the sense of Definition~\ref{defn:parabolic}.
\end{definition}

The fundamental result on strongly parabolic evolution equations is the following theorem on short-time existence and uniqueness. Precise statements and detailed proofs are notoriously hard to find in the literature. (See also Remark~\ref{rmk:parabolic-defn} below.) One paper which appears to give a complete proof is Mantegazza--Martinazzi~\cite{MM}.

\begin{theorem} \label{thm:parabolic-STE}
Let $P \colon \cU \to \Gamma(E)$ be a second order nonlinear but quasilinear differential operator such that $\partial_t s = P s$ is a strongly parabolic evolution equation. Given $s_0 \in \cU$, there exists $\eps > 0$ and a \emph{unique} smooth solution to $\partial_t s = P s$ for $t \in [0, \eps)$ with initial condition $s(0) = s_0$. That is, a strongly parabolic flow has \emph{short time existence and uniqueness}.
\end{theorem}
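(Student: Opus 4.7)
The plan is to reduce the nonlinear problem to the linear parabolic theory by a fixed-point argument, exploiting quasilinearity. The natural functional setting is the parabolic H\"older spaces $C^{2+\alpha, 1 + \alpha/2}(M \times [0, \eps], E)$ for some $\alpha \in (0,1)$, since these spaces enjoy Schauder-type maximal regularity estimates for second order linear strongly parabolic equations on compact manifolds. (One could instead work in the anisotropic Sobolev space $W^{2,1}_p$ for $p$ large, but the H\"older setup is cleanest for bootstrapping to $C^{\infty}$.)

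First, use quasilinearity~\eqref{eq:QLDO} to decompose $P(s) = A(s)\, s + Q(s)$, where $A(s)$ is a linear second order differential operator on $E$ whose coefficients depend pointwise and smoothly on $s$ and $\nabla s$, and $Q(s)$ depends pointwise on $s$ and $\nabla s$ alone. By hypothesis, the linearization $D_{s_0} P$ is strongly parabolic; in terms of this decomposition this is exactly the statement that $A(s_0)$ has positive-definite principal symbol. Since strong parabolicity is an open condition and $M$ is compact, there is a $C^{2+\alpha}$-neighbourhood $\cU_0$ of $s_0$ on which $A(s)$ remains uniformly strongly parabolic. Standard linear parabolic theory (e.g. Ladyzhenskaya--Solonnikov--Ural'tseva or Lieberman, transferred to $M$ via a finite atlas and partitions of unity using compactness) then yields, for each $s$ with values in $\cU_0$ and each datum $(\tilde s_0, f)$, a unique $\tilde s \in C^{2 + \alpha, 1 + \alpha/2}$ solving
\begin{equation*}
\partial_t \tilde s - A(s) \tilde s = f, \qquad \tilde s(\cdot, 0) = \tilde s_0,
\end{equation*}
together with a quantitative Schauder estimate.

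With this in hand, define a map $\Phi$ which sends $s$ in a small closed ball $\cB$ of $C^{2+\alpha, 1 + \alpha/2}(M \times [0, \eps], E)$ centred at the constant-in-time path $s \equiv s_0$ to the unique solution $\tilde s = \Phi(s)$ of the \emph{linear} Cauchy problem $\partial_t \tilde s = A(s) \tilde s + Q(s)$ with $\tilde s(\cdot, 0) = s_0$. A fixed point of $\Phi$ is precisely a short-time solution of $\partial_t s = P(s)$ with initial datum $s_0$. The main obstacle, and the technical heart of the argument, is showing that after shrinking $\eps > 0$ if necessary, $\Phi$ maps $\cB$ into itself and is a contraction. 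The self-map property follows because the solution of the linear problem with zero forcing and initial value $s_0$ is the constant path $s \equiv s_0$, so $\Phi(s) - s_0$ is controlled by $Q(s)$ and by the perturbation $A(s) - A(s_0)$, both of which produce errors of size $o(1)$ as $\eps \to 0$ by the Schauder estimate and the vanishing initial data. Contractivity is obtained by applying the same Schauder estimate to the difference $\Phi(s_1) - \Phi(s_2)$, which solves a linear strongly parabolic equation with zero initial data and right-hand side bounded by a smoothness-of-$A$ and $Q$ constant times $\|s_1 - s_2\|_{C^{2+\alpha, 1 + \alpha/2}}$; the vanishing initial data allows a short-time factor to be absorbed so that the resulting multiplier is strictly less than $1$.

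Banach's contraction mapping theorem then produces a unique $s \in \cB$ with $\Phi(s) = s$, yielding the sought-after short-time solution in $C^{2+\alpha, 1 + \alpha/2}$. Uniqueness in the full class of smooth solutions follows by applying the same linear Schauder estimate to the difference of two candidate solutions (which satisfies a linear strongly parabolic equation with vanishing initial data) on a possibly shorter time interval, together with a standard open-closed continuation argument. Finally, $C^{\infty}$ smoothness is obtained by a routine parabolic bootstrap: once the solution is known to be $C^{2+\alpha, 1+\alpha/2}$, the coefficients $A(s)$ and the inhomogeneity $Q(s)$ lie in higher H\"older classes, and differentiating the equation in space and time and reapplying the Schauder estimates iteratively upgrades the regularity to any desired order.
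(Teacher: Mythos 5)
Your proposal is essentially sound, but note that the paper does not prove Theorem~\ref{thm:parabolic-STE} at all: it explicitly remarks that complete proofs are hard to locate and defers to Mantegazza--Martinazzi~\cite{MM}. So the relevant comparison is with that reference rather than with anything in the text. Mantegazza--Martinazzi work in parabolic Sobolev spaces $W^{2,1}_p$, invoke Solonnikov's linear theory for parabolic \emph{systems}, and close the argument with an inverse-function-theorem/linearization scheme; you instead freeze coefficients along the quasilinear structure $P(s) = A(s)s + Q(s)$, use parabolic Schauder theory, and run a Banach fixed point in $C^{2+\alpha,1+\alpha/2}$, followed by a bootstrap. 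Both routes are classical and both rest on the same two pillars: (i) the principal symbol of the linearization at $s_0$ coincides with that of $A(s_0)$ because the coefficients depend only on $s$ and $\nabla s$ (your observation here is correct and worth stating, since it is exactly where quasilinearity enters), and (ii) linear existence plus a priori estimates for strongly parabolic systems. Your H\"older-space contraction is arguably the more elementary and self-contained presentation; the $W^{2,1}_p$/inverse-function-theorem route of~\cite{MM} generalizes more readily and avoids some of the delicate short-time bookkeeping below.

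Two places in your sketch need more care than you give them. First, the linear input: Ladyzhenskaya--Solonnikov--Ural'tseva and Lieberman are written for scalar equations, whereas here $E$ has rank $k>1$, so you must appeal to Schauder/Solonnikov theory for parabolic \emph{systems}; this is legitimate because the condition of Definition~\ref{defn:parabolic} (uniform positivity of the symmetric part of the symbol) implies Petrovskii parabolicity of the frozen-coefficient operator $A(s)$, but that implication should be said, since it is precisely the point at issue in Remark~\ref{rmk:parabolic-defn}. Second, the contraction factor: the Schauder constant does not shrink as $\eps \to 0$, so estimating $\Phi(s_1)-\Phi(s_2)$ in the full $C^{2+\alpha,1+\alpha/2}$ norm only yields a bound by a fixed constant times $\|s_1-s_2\|$, not a small one. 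The standard repair is to exploit the vanishing initial data quantitatively, e.g.\ by running the contraction in a weaker norm (such as $C^{1+\alpha}$ or $C^0$) on a closed ball of the strong norm, using interpolation and the fact that such balls are closed under the weaker convergence, or by working with H\"older seminorms restricted to $[0,\eps]$ that carry an explicit factor $\eps^{\alpha/2}$ for functions vanishing at $t=0$. As written, "the short-time factor can be absorbed" is the one genuinely unproved step; with that point fleshed out, your argument is a correct and complete alternative to the cited proof.
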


\begin{remark} \label{rmk:self-adjoing-SP}
The condition of strong parabolicity only involves the \emph{self-adjoint} part of the principal symbol. More precisely, let $B = \sigma_2 (P)(\xi)$, and let $B = B^+ + B^-$ be the orthogonal decomposition of $B$ into self-adjoint and skew-adjoint endomorphisms. Then $\langle Bs, s \rangle = \langle B^+ s, s \rangle$, so strong parabolicity is equivalent to the self-adjoint part $B^+$ of the principal symbol to be \emph{positive definite}. Note that if $B$ has a kernel, then it cannot be strongly parabolic. But being invertible is certainly not sufficient.
\end{remark}

\begin{remark} \label{rmk:parabolic-defn}
Some references define strong parabolicity to be equivalent to all the (complex) eigenvalues of $B$ having \emph{strictly positive real part}. It is not hard to see that this definition is \emph{not equivalent} to our Definition~\ref{defn:parabolic}. (For example, see Remark~\ref{rmk:RB-STE} below.) The proofs of Theorem~\ref{thm:parabolic-STE} that the author is aware of all seem to use our Definition~\ref{defn:parabolic}. Perhaps one can also prove STE for this inequivalent definition of strong parabolicity, but the author is unaware if this is true. Of course, if $B$ is self-adjoint, then both definitions agree, and all the eigenvalues are strictly real in that case anyway. (Compare with Remarks~\ref{rmk:B-not-self-adjoint} and~\ref{rmk:B-self-adjoint} below.)
\end{remark}

\section{The harmonic map heat flow} \label{sec:harmonic-map-flow}

In this section we consider an important example of a geometric flow which is strongly parabolic, namely the \emph{harmonic map heat flow} introduced by Eells--Sampson~\cite{Eells-Sampson}. Indeed, to the author's knowledge this was the \emph{first} significant example of a geometric flow. It also plays an important role in proving short time existence for the Ricci flow.

Let $(M^n, g)$ and $(N^k, h)$ be a pair of compact Riemannian manifolds. We want to consider the notion of a ``best'' smooth map $F \colon M \to N$. To do this, we define the \emph{map Laplacian} $\Delta_{g, h} F$ of $F$ with respect to the two metrics, as follows. First, for any $p \in M$, the differential $(dF)_p = (f_*)_p$ is a linear map from $T_p M$ to $T_{F(p)} N = (F^* TN)_{p}$. Thus, the differential $dF$ is in fact a smooth section of the bundle $T^* M \otimes F^* TN$. Explicitly, if $x^1, \ldots, x^n$ are local coordinates for $M$ and $y^1, \ldots, y^k$ are local coordinates for $N$, then
\begin{equation*}
    dF = \frac{\partial F^{\alpha}}{\partial x^j} dx^j \otimes \frac{\partial}{\partial y^{\alpha}}.
\end{equation*}

We can take the covariant derivative of $dF$ using the connection induced on $T^* M \otimes F^* TN$ by the Levi-Civita connections of $g$ and $h$. That is, if $(\Gamma_g)^k_{ij}$ and $(\Gamma_h)^{\alpha}_{\beta \gamma}$ are the Christoffel symbols for $g$ and $h$, respectively, then 
\begin{equation*}
    \nabla_i dx^j = - \Gamma^j_{ik} dx^k \qquad \text{and} \qquad \nabla_i \frac{\partial}{\partial y^{\alpha}} = \frac{\partial F^{\beta}}{\partial x^i} \nabla_{\beta} \frac{\partial}{\partial y^{\alpha}} = \frac{\partial F^{\beta}}{\partial x^i} (\Gamma_h)^{\gamma}_{\beta \alpha} \frac{\partial}{\partial y^{\gamma}}.     
\end{equation*}
A computation then yields that $\nabla dF \in \Gamma(T^* M \otimes T^* M \otimes F^* TN)$ is given by
\begin{equation*}
    \nabla dF = \nabla_i dF dx^i = \left( \frac{\partial^2 F^{\alpha}}{\partial x^i \partial x^j } - (\Gamma_g)^k_{ij} \frac{\partial F^{\alpha}}{\partial x^k} + \frac{\partial F^{\beta}}{\partial x^i} \frac{\partial F^{\gamma}}{\partial x^j} (\Gamma_h)^{\alpha}_{\beta \gamma} \circ F \right) dx^i \otimes dx^j \otimes \frac{\partial}{\partial y^{\alpha}}.
\end{equation*}
We now define the \emph{map Laplacian} of $F$ to be the smooth section $\Delta_{g, h} F$ of the pullback tangent bundle $F^* TN$ given by the trace of $\nabla dF$ with respect to $g$. That is,
\begin{equation} \label{eq:map-Laplacian}
    (\Delta_{g, h} F)^{\alpha} = g^{ij} \left( \frac{\partial^2 F^{\alpha}}{\partial x^i \partial x^j } - (\Gamma_g)^k_{ij} \frac{\partial F^{\alpha}}{\partial x^k} + \frac{\partial F^{\beta}}{\partial x^i} \frac{\partial F^{\gamma}}{\partial x^j} (\Gamma_h)^{\alpha}_{\beta \gamma} \circ F \right).
\end{equation}
A smooth map $F \colon (M, g) \to (N, h)$ is called a \emph{harmonic map} if $\Delta_{g, h} F = 0$. In some sense, this is the ``best'' type of map between Riemannian manifolds.

\begin{remark}
The notion of a harmonic map generalizes two familiar concepts. When the codomain $N$ is $\R$ equipped with the standard flat metric, then a harmonic map $F \colon M \to \R$ is just a harmonic function. That is, $F \in C^{\infty} (M)$ is in the kernel of the Laplace-Beltrami operator. When the domain $M = \R$ or $S^1$ equipped with the standard flat metric, then a harmonic map $F \colon M \to N$ is a \emph{geodesic} on $N$.
\end{remark}

The bundle $T^* M \otimes F^* TN$ is equipped with a fibre metric $g^{-1} \otimes F^* h$ induced by $g$, $h$, and $F$. This allows us to define the \emph{energy density} $|dF|^2$ of the map $F$ by
\begin{equation*}
    | dF |^2 = \frac{\partial F^{\alpha}}{\partial x^i} \frac{\partial F^{\beta}}{\partial x^j} g^{ij} (h_{\alpha \beta} \circ F),
\end{equation*}
and it is a smooth function on $M$. One can show tha the harmonic map equation $\Delta_{g, h} F = 0$ is the Euler-Lagrange equation for the \emph{energy functional} $F \mapsto \int_M | dF |^2 \vol_M$.

In fact, the \emph{negative gradient flow} of the energy functional is the geometric flow
\begin{equation} \label{eq:HMHF}
    \partial_t F = \Delta_{g, h} F,
\end{equation}
and is called the \emph{harmonic map heat flow}. It should be clear from the expression~\eqref{eq:map-Laplacian} that the principal symbol of the nonlinear but quasilinear second order differential operator $F \mapsto \Delta_{g, h} F$ is positive definite self-adjoint, and thus the harmonic map heat flow~\eqref{eq:HMHF} should be strongly parabolic and hence have short time existence and uniqueness. This is indeed the case, but there is a subtlety because the domain of this operator is the space $\mathrm{Map} (M, N)$ of smooth maps from $M$ to $N$, which is not the space of sections of a vector bundle, but rather an infinite-dimensional manifold. Nevertheless, Theorem~\ref{thm:parabolic-STE} can indeed be adapted to this setting.

In Section~\ref{sec:DeTurck}, in order to establish short time existence and uniqueness of the Ricci flow,  we will need the following special case of the map Laplacian.

\begin{prop} \label{prop:map-Laplacian-diffeo}
Let $F \colon (M, g) \to (N, h)$ be a \emph{diffeomorphism}. Then the map Laplacian $\Delta_{g, h} F$ of $F$ can be written as
\begin{equation} \label{eq:map-Laplacian-diffeo}
    (\Delta_{g, h} F)^{\gamma} = \big[ ((F^{-1})^* g)^{\alpha \beta} \left( - (\Gamma_{(F^{-1})^* g})^{\gamma}_{\alpha \beta} + (\Gamma_h)^{\gamma}_{\alpha \beta} \right) \big] \circ F.
\end{equation}
\end{prop}
\begin{proof}
In this proof, to keep the notation under control, it is understood that a function of the local coordinates $y^{\alpha}$ on $N$ is precomposed by $F$, so that it is actually a function of the local coordinates $x^i$ on $M$.

For simplicity, let $\tilde h = (F^{-1})^* g$, which is a metric on $N$ satisfying $F^* \tilde h = g$. Note that the Jacobian matrices $\frac{\partial (F^{-1})^i}{\partial y^{\alpha}}$ and $\frac{\partial F^{\alpha}}{\partial x^i}$ are inverses of each other. The Christoffel symbols $\Gamma_g$ for the metric $g = F^* \tilde h$ on $N$ satisfy
\begin{align*}
    (\Gamma_g)^k_{ij} \frac{\partial}{\partial x^k} = (\Gamma_{F^* \tilde h})^k_{ij} \frac{\partial}{\partial x^k} & = \nabla^{F^* \tilde h}_i \frac{\partial}{\partial x^j} = (F^* \nabla^{\tilde h})_i \frac{\partial}{\partial x^j} \\
    & = (F^{-1})_* \left( \nabla^{\wt h}_{F_* \frac{\partial}{\partial x^i}} \Big( F_* \frac{\partial}{\partial x^j} \Big) \right) \\
    & = (F^{-1})_* \left( \frac{\partial F^{\alpha}}{\partial x^i} \nabla^{\wt h}_{\alpha} \Big( \frac{\partial F^{\beta}}{\partial x^j} \frac{\partial}{\partial y^{\beta}} \Big) \right) \\
    & = (F^{-1})_* \left( \frac{\partial^2 F^{\beta}}{\partial x^i \partial x^j} \frac{\partial}{\partial y^{\beta}} + \frac{\partial F^{\alpha}}{\partial x^i} \frac{\partial F^{\beta}}{\partial x^j} (\Gamma_{\tilde h})^{\gamma}_{\alpha \beta} \frac{\partial}{\partial y^{\gamma}} \right) \\
    & = \left( \frac{\partial^2 F^{\gamma}}{\partial x^i \partial x^j} + \frac{\partial F^{\alpha}}{\partial x^i} \frac{\partial F^{\beta}}{\partial x^j} (\Gamma_{\tilde h})^{\gamma}_{\alpha \beta} \right) \frac{\partial (F^{-1})^k}{\partial y^{\gamma}} \frac{\partial}{\partial x^k},
\end{align*}
from which we deduce that
\begin{equation*}
    (\Gamma_g)^k_{ij} \frac{\partial F^{\gamma}}{\partial x^k} = \frac{\partial^2 F^{\gamma}}{\partial x^i \partial x^j} + \frac{\partial F^{\alpha}}{\partial x^i} \frac{\partial F^{\beta}}{\partial x^j} (\Gamma_{\tilde h})^{\gamma}_{\alpha \beta}.
\end{equation*}
We also have $g_{ij} = (F^* \tilde h)_{ij} = \tilde h_{\lambda \mu} \frac{\partial F^{\lambda}}{\partial x^i} \frac{\partial F^{\mu}}{\partial x^j}$, so
\begin{equation} \label{eq:map-Laplacian-diffeo-temp}
g^{ij} = \tilde h^{\lambda \mu} \frac{\partial (F^{-1})^i}{\partial y^{\lambda}} \frac{\partial (F^{-1})^j}{\partial y^{\mu}}.
\end{equation}
The above two expressions yield
\begin{align*}
    g^{ij} (\Gamma_g)^k_{ij} \frac{\partial F^{\gamma}}{\partial x^k} & = g^{ij}\frac{\partial^2 F^{\gamma}}{\partial x^i \partial x^j} + (\tilde h^{\lambda \mu} \circ F) \frac{\partial (F^{-1})^i}{\partial y^{\lambda}} \frac{\partial (F^{-1})^j}{\partial y^{\mu}} \frac{\partial F^{\alpha}}{\partial x^i} \frac{\partial F^{\beta}}{\partial x^j} (\Gamma_{\tilde h})^{\gamma}_{\alpha \beta} \\
    & = g^{ij}\frac{\partial^2 F^{\gamma}}{\partial x^i \partial x^j} + \tilde h^{\alpha \beta} (\Gamma_{\tilde h})^{\gamma}_{\alpha \beta}.
\end{align*}
Substituting the above into~\eqref{eq:map-Laplacian} and using~\eqref{eq:map-Laplacian-diffeo-temp} gives
\begin{align*}
    (\Delta_{g, h} F)^{\gamma} & = - \tilde h^{\alpha \beta} (\Gamma_{\tilde h})^{\gamma}_{\alpha \beta} + g^{ij} \frac{\partial F^{\alpha}}{\partial x^i} \frac{\partial F^{\beta}}{\partial x^j} (\Gamma_h)^{\gamma}_{\alpha \beta} \\
    & = \tilde h^{\alpha \beta} \left( - (\Gamma_{\tilde h})^{\gamma}_{\alpha \beta} + (\Gamma_h)^{\gamma}_{\alpha \beta} \right)
\end{align*}
as claimed.
\end{proof}

\section{Exercises} \label{sec:exercises-structures}

\begin{exercise} \label{exer:GLmC}
Consider the standard embedding $\GL(m, \C) \to \GL(2m, \R)$ given by
\begin{equation*}
    A + i B \to \begin{pmatrix} A & - B \\ B & A \end{pmatrix}.
\end{equation*}
Note that this is precisely the embedding which identifies multiplication by $i$ in $\C^m$ with left multiplication by the matrix
\begin{equation*}
    J = \begin{pmatrix} 0 & - I_m \\ I_m & 0 \end{pmatrix}
\end{equation*}
in $\R^{2m}$, where $J^2 = - I_{2m}$. We identify $\GL(m, \C)$ with the subgroup of $\GL(2m, \R)$ given by this embedding.
\begin{enumerate}[{$[$}a{$]$}]
\item Show that a matrix $P \in \GL(2m, \R)$ lies in $\GL(m, \C)$ if and only if $PJ = JP$.
\item Show that any $P \in \GL(m, \C)$ has $\det P > 0$, so that $\GL(m, \C) \subset \GL^+ (2m, \R)$.
\item Show explicitly that any two bases of $\R^{2m}$ of the form $(e_1, J e_1, \ldots, e_m, J e_m)$ are related by a matrix with positive determinant.
\end{enumerate}
\end{exercise}

\begin{exercise}
Let $(M^n, g, \vol)$ be an oriented Riemannian manifold of dimension $n$, where $\vol$ is the associated volume form. Let $d \colon \Omega^k \to \Omega^{k+1}$ be the exterior derivative, and let $d^* \colon \Omega^k \to \Omega^{k-1}$ be its formal adjoint, defined by $\int_M \langle d \alpha, \gamma \rangle \vol = \int_M \langle \alpha, d^* \gamma \rangle \vol$ for all $\alpha \in \Omega^k$ and $\gamma \in \Omega^{k+1}$. Recall that the Hodge star operator is defined by
\begin{equation*}
    \alpha \w \star \beta = \langle \alpha, \beta \rangle \vol \qquad \text{for $\alpha, \beta \in \Omega^k$.}
\end{equation*}
\begin{enumerate}[{$[$}a{$]$}]
\item Show that $\star^2 = (-1)^{k(n-k)}$ on $\Omega^k$.
\item Show that $\langle \star \alpha, \star \beta \rangle = \langle \alpha, \beta \rangle$, so $\star$ is an isometry.
\item Show that $d^* = (-1)^{nk+n+1} \star d \star$ on $\Omega^k$. \emph{Hint:} Consider $d (\alpha \w \star \beta)$ for $\alpha \in \Omega^{k-1}$ and $\beta \in \Omega^k$.
\end{enumerate}
\end{exercise}

\begin{exercise}
Let $(M, g)$ be a compact oriented Riemannian manifold. Recall that the Hodge Laplacian on $k$-forms is defined to be $\Delta_d = d d^* + d^* d$.
\begin{enumerate}[{$[$}a{$]$}]
\item Show that the Weitzenb\"ock formula for the Hodge Laplacian on $1$-forms is
\begin{equation*}
\Delta_d \alpha = \nabla^* \nabla \alpha + \tRc \alpha \qquad \text{for $\alpha \in \Omega^1$},    
\end{equation*}
where $(\tRc \alpha)(Y) = \tRc(\alpha, Y)$ and we identify $1$-forms with vector fields as usual.
\item If $\tRc \geq 0$, show that any harmonic $1$-form is parallel. Deduce in this case using the Hodge Theorem that the first Betti number $b_1 (M) = \dim H^1 (M, \R)$ is at most $n$.
\item If $\tRc \geq 0$ and $\tRc$ is \emph{strictly} positive at any point, show that any harmonic $1$-form is zero.
\end{enumerate}

\hskip -0.25in \textbf{Introduction to $\SU(2)$-structures.} The remaining exercises in this chapter concern the geometry of $\SU(2)$-structures. A good reference for the reader is Fowdar--S\'a Earp~\cite{FS}. An $\SU(2)$-structure on a $4$-manifold $M$ is encoded by a triple of $2$-forms $( \om_1,\om_2,\om_3)$ which is \emph{pointwise} modelled by the triple
\begin{equation} \label{eq: selfdual omega_i}
    \om_1 = e_0 \w e_1 + e_2 \w e_3, \quad \om_2 = e_0 \w e_2 + e_3 \w e_1, \quad \om_3 = e_0 \w e_3 + e_1 \w e_2,
\end{equation}
on $\R^4$ in terms of the standard basis $\{ e_0, e_1, e_2, e_3 \}$.
\end{exercise}

\begin{exercise} The triple~\eqref{eq: selfdual omega_i} defines a metric $g$ and a triple of orthogonal almost complex structures $(J_1, J_2, J_3)$ on $M^4$ as follows:
\begin{equation}
\label{eq: ACS J_i}
    J_iX \ip \om_j := X \ip \om_k, \qforq (i,j,k)\sim (1,2,3) \qandq X \in \mathfrak{X}(M).
\end{equation}
The compatible metric $g$ is then given by $\om_i(\cdot,\cdot) = g(J_i \cdot, \cdot)$. 
\begin{enumerate}[{$[$}a{$]$}]
    \item Convince yourself that each $J_i$ is indeed an orthogonal almost complex structure.
        
    \item What is the most elegant way to show that~\eqref{eq: selfdual omega_i} defines an $\SU(2)$-structure in the sense of the lectures? (That is, a reduction of the frame bundle $\FR(M)$ to an $\SU(2)$-bundle.)

    \item Observe from~\eqref{eq: selfdual omega_i} that $\frac{1}{2} \om_1 \w \om_1 = \frac{1}{2} \om_2 \w \om_2 = \frac{1}{2} \om_3 \w \om_3$ is a nowhere vanishing top form on $M^4$. Show that with respect to this orientation, this form is the Riemannian volume form $\vol$ of $g$. Let $\star$ denote the associated Hodge star operator. Defining $J_i \alpha : = J_i^* \alpha$, convince yourself that $\star$ and $J_i$ commute on differential forms.

    \item Verify that the triple~\eqref{eq: selfdual omega_i} give a global trivialization of the bundle of self-dual $2$-forms on $M^4$.

    \item Consider the bilinear map $\diamond \colon \Omega^2 \times \Omega^2 \to \Omega^2$ defined on decomposable $2$-forms as follows. If $\alpha=\alpha_1 \w \alpha_2$ and $\beta=\beta_1 \w \beta_2$ we set
        \begin{equation}
        \alpha \diamond \beta := \alpha_1 \w (\alpha_2 \ip \beta) -\alpha_2 \w (\alpha_1 \ip \beta),
        \label{equ: diamond on 2 forms}
        \end{equation}
        where as usual we identify vector fields and $1$-forms using the metric $g$. (Note that since $\alpha_1 \w \alpha_2 = \alpha_1 \otimes \alpha_2 - \alpha_2 \otimes \alpha_1$, this is consistent with our definition of $\diamond$ from~\eqref{eq:diamond-defn}.)
        
        Define $\bar{\om}_1, \bar{\om}_2, \bar{\om}_3$ by
        \begin{equation*}
        \bar{\om}_1 = e_0 \w e_1 - e_2 \w e_3, \quad \bar{\om}_2 = e_0 \w e_2 - e_3 \w e_1, \quad \bar{\om}_3 = e_0 \w e_3 - e_1 \w e_2,
        \end{equation*}
        By computing $\om_i \diamond \om_j$, $\om_i \diamond \bar{\om}_j$, and $\bar{\om}_i \diamond \bar{\om}_j$ for all $i,j=1,2,3$, is it fair to say that $\diamond$ behaves like a Lie bracket on $\so(4) = \Lambda^2 (\R^4)$? In fact you should be able to establish the classical Lie algebra direct sum decomposition $\so(4) = \su(2) \oplus \su(2)$.
\end{enumerate}
    \emph{Hint:} For all these computations it suffices to work in $\R^4$ with the $\SU(2)$-structure defined by~\eqref{eq: selfdual omega_i}.
\end{exercise}
 
\begin{exercise}
Denoting by $\nabla$ the Levi-Civita connection of the metric $g$, the \emph{torsion} tensor $T \in \Gamma( T^* M \otimes \Lambda^2 (T^* M))$ of an $\SU(2)$-structure is defined implicitly by
\begin{equation*}
    \nabla_X \underline{\om} =: T(X) \diamond \underline{\om},
\end{equation*}
where $\underline{\om} = (\om_1, \om_3, \om_3)$. Use the preliminary linear algebraic results of the previous problem to establish the following.
    \begin{enumerate}[{$[$}a{$]$}]
    \item Show that the torsion tensor can be expressed as 
    \begin{align*}
    T  & = -\tfrac{1}{4} \big( g (\nabla \om_2, \om_3) \otimes \om_1 + g (\nabla \om_3, \om_1) \otimes \om_2 + g (\nabla \om_1, \om_2) \otimes \om_3 \big) \\
    & = a_1 \otimes \om_1 + a_2 \otimes \om_2 + a_3 \otimes \om_3, \qforq a_i \in \Omega^1 (M^4),
    \end{align*}
    so that in fact
    \begin{equation*}
    T \in \Gamma( T^* M \otimes \Lambda^2_+ (T^* M)).
    \end{equation*}
         
    \item Equivalently, convince yourself that
    \begin{align*}
    a_1 & = \tfrac{1}{4} \big( \star d \om_1 + J_2 (\star d \om_3) - J_3 (\star d \om_2) \big), \\
    a_2 & = \tfrac{1}{4} \big( \star d \om_2 + J_3 (\star d \om_1) - J_1 (\star d \om_3) \big), \\
    a_3 & = \tfrac{1}{4} \big( \star d \om_3 + J_1 (\star d \om_2) - J_2 (\star d \om_1) \big).
    \end{align*}  
    From the above deduce that
    \begin{equation*}
        T = 0 \iff \nabla \om_1 = \nabla \om_2 = \nabla \om_3 = 0 \iff d \om_1 = d \om_2 = d \om_3 = 0.
    \end{equation*}
     (You will need to use the fact that $J_1 J_2 = - J_3$ on $1$-forms, and cyclic permutations thereof.) We call such a torsion-free $\SU(2)$-structure $(M, \om_1, \om_2, \om_3)$ a \emph{hyperK\"ahler} $4$-manifold.     
\end{enumerate}
\end{exercise}

\begin{exercise}
Recall that the Newlander--Nirenberg theorem says that an almost complex structure $J$ is integrable if and only if its Nijenhuis tensor $N_J$ vanishes, where as in~\eqref{eq:Nijenhuis} we have
    \begin{equation*}
    N_J(X,Y) = [X,Y] + J [JX, Y] + J [X, JY] - [JX, JY]. 
    \end{equation*}
    \begin{enumerate}[{$[$}a{$]$}]
        \item We decompose $T^* M \otimes \C = (T^* M)^{(1.0)} \oplus (T^* M)^{(0,1)}$ into $\pm i$-eigenspaces of $J$. Then we define the space of (complex-valued) forms on $M$ of type $(p,q)$ to be
        \begin{equation*}
        \Omega^{(p,q)} = \Gamma( \Lambda^p ((T^* M)^{(1.0)}) \otimes \Lambda^q ((T^* M)^{(0,1)}).
        \end{equation*}
        Show that $N_J \equiv 0 \iff d (\Omega^{1,0}) \subset \Omega^{2,0} \oplus \Omega^{1,1}$. 
    \item Prove that $(M^4, \om_1, \om_2, \om_3)$ is:
    \begin{enumerate}[(i)]
        \item $J_i$-Hermitian (that is, $J_i$ is integrable) iff $J_j a_j = J_k a_k$, where $(i,j,k)\sim (1,2,3)$.
        \item hyper-Hermitian (that is, $J_1, J_2, J_3$ are all integrable), iff $J_1a_1=J_2a_2=J_3a_3$.
        \item K\"ahler with respect to $\om_i$, iff $a_j = a_k = 0$, where $(i,j,k)\sim (1,2,3)$.
    \end{enumerate}
    \item An $\SU(2)$-structure is called \emph{harmonic} if $\mathrm{div}(T) : = g^{ij} \nabla_i T_{j, kl} = 0$. Show that
    \begin{equation*}
    \mathrm{div}(T) = 0 \iff d^* a_1 = d^* a_2 = d^*a_3 = 0.
    \end{equation*}
    
    \emph{Hint:} For a $1$-form $\alpha$, we have $d^* \alpha = -g^{ij} \nabla_i \alpha_j$.
\end{enumerate}
\end{exercise}

\chapter{The Ricci flow} \label{chapter:Ricci-flpw}

In this chapter, we focus on geometric flows of Riemannian metrics, particularly (but not exclusively) the Ricci flow. This is by far the most well-studied and well-understood flow of $G$-structures, where $G = \O(n)$. In addition to surveying a broad set of known results about the Ricci flow, we emphasize that this flow is \emph{not} strongly parabolic, due to \emph{diffeomorphism invariance}. We make this precise and explain the ``DeTurck trick'' to prove STE and uniqueness for the Ricci flow, by establishing a correspondence with a strongly parabolic flow. One can think of this technique as a type of ``gauge-fixing''. An excellent reference for the Ricci flow, which includes almost everything we discuss in this chapter, is Chow--Knopf~\cite{Chow-Knopf}.

\section{Motivation for the Ricci flow} \label{sec:RF-motivation}

Let $g_0$ be a Riemannian metric on $M$. We seek a ``reasonable'' \emph{geometric flow} of Riemannian metrics
\begin{equation*}
    \partial_t g = P g, \qquad g(0) = g_0.
\end{equation*}
What could this be? In analogy with the heat equation, we look for a ``heat type'' flow, which qualitatively exhibits \emph{diffusive} properties. That is, we want a flow which tends to ``diffuse'' the curvature of $g$ in some sense, or said differently to spread it out evenly and dissipate it away. The first na\"ive guess would be
\begin{equation*}
    \partial_t g = \Delta g = - \nabla^* \nabla g,
\end{equation*}
where $\nabla$ is the Levi-Civita connection of $g$. This cannot work, of course, because $\nabla g = 0$ since $\nabla$ is $g$-compatible. So we need to think a little harder. Since the flow would stop when the right hand side is zero, and we want it to stop when it has ``nice'' curvature, and since the various curvatures are indeed second order nonlinear but quasilinear differential expressions in the metric $g$, the obvious thing to do is to build the right hand side of our flow using (linear combinations of) various notions of curvature of $g$. Moreover, since $g$ is a symmetric $2$-tensor, we have to move $g$ within the space of symmetric $2$-tensors, so $\partial_t g$ must be a symmetric $2$-tensor. We have two obvious choices: the \emph{Ricci tensor} $\tRc$ of $g$, and $R g$, where $R$ is the scalar curvature of $g$.

In fact, it is well-known that the only second order differential invariant of a Riemannian metric $g$ is the Riemann curvature tensor $\tRm$. Moreover, there is an orthogonal decomposition
\begin{equation*}
    \tRm = c_n R g \owedge g + c'_n g \owedge \tRc^0 + W,
\end{equation*}
where $\tRc^0 = \tRc - \frac{1}{n} R g$ is the \emph{trace-free} part of the Ricci tensor, $W$ is the \emph{Weyl} tensor, and $c_n, c'_n$ are constants depending on the dimension $n$. Here $\owedge$ is the \emph{Kulkarni-Nomizu} product that takes two symmetric $2$-tensors and outputs a curvature-type tensor. See Besse~\cite{Besse} for more details. The upshot of this discussion is that the \emph{only} possibility for a flow of Riemannian metrics that has a chance to yield a ``heat type'' flow must be of the form
\begin{equation} \label{eq:flow-metrics}
    \partial_t g = a \tRc + b R g
\end{equation}
for some $a, b \in \R$. We'll return to the general case of~\eqref{eq:flow-metrics} in Section~\ref{sec:RB-flow} but for now we make the following definition.

\begin{definition} \label{defn:ricci-flow}
The \emph{Ricci flow} of Riemannian metrics is the geometric evolution equation
\begin{equation*} \label{eq:ricci-flow}
    \partial_t g = - 2 \tRc.
\end{equation*}
That is, we take $a = -2$ and $b = 0$ in~\eqref{eq:flow-metrics}. With respect to any local frame this becomes
\begin{equation} \label{eq:ricci-flow-coords}
    \partial_t g_{ij} = - 2 R_{ij}.
\end{equation}
\end{definition}

To justify the above choice, and to motivate the underlying issue with most geometric flows, let us consider what the Ricci flow~\eqref{eq:ricci-flow-coords} says at a given time $t$ in a particular special choice of local coordinates. Recall that local coordinates $x^1, \ldots, x^n$ for $M$ are in particular (local) smooth real-valued functions. We say that the coordinates are \emph{harmonic} with respect to $g$ if $\Delta x^i = 0$ for all $i$, where $\Delta$ is the Laplace-Beltrami operator of $g$. From~\eqref{eq:LB-coords2} with $u = x^{\l}$, we get that $x^1, \ldots, x^n$ are harmonic if and only if $g^{ij} \Gamma^{\l}_{ij} = 0$ for all $1 \leq \l \leq n$. It follows from the local solvability of the Laplace equation that local harmonic coordinates \emph{always exist}. (See~\cite[pages 90--92]{Chow-Knopf}.)

\begin{lemma} \label{lemma:harmonic-coords}
In local harmonic coordinates we have
\begin{equation} \label{eq:Ricci-harmonic}
    R_{jk} = - \frac{1}{2} \Delta g_{jk} + \lot,
\end{equation}
where $\lot$ denotes lower order terms. (That is, terms involving only one or no derivatives of the metric.)

Note that in the right hand side of~\eqref{eq:Ricci-harmonic} the expression $\Delta g_{jk}$ denotes the Laplace-Beltrami operator $\Delta$ acting on the \emph{function} $g_{jk}$. Thus~\eqref{eq:Ricci-harmonic} is \emph{not} a coordinate-independent (that is, tensorial) equation. This is expected, because the equation holds only for a \emph{particular} class of special coordinates.
\end{lemma}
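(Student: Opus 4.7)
The plan is to expand the Ricci tensor as a linear second-order differential operator on $g$, identify the principal part, and then exploit the harmonic coordinate condition to eliminate every second-derivative term of $g$ except the ``wave''-type one $g^{i\ell}\partial_i\partial_\ell g_{jk}$, which up to lower order terms equals $\Delta g_{jk}$.

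First I would write the Ricci tensor using the formulas in~\eqref{eq:curvatures}. Contracting gives $R_{jk} = \partial_i \Gamma^i_{jk} - \partial_j \Gamma^i_{ik} + (\text{Christoffel})\cdot(\text{Christoffel})$, and since each Christoffel symbol is itself a first-order expression in $g$, the quadratic terms belong to $\lot$. Next I would substitute~\eqref{eq:Christoffel} into the two surviving derivatives, using the identity $\Gamma^i_{ik} = \tfrac{1}{2} g^{i\ell} \partial_k g_{i\ell}$ (which follows from the symmetry $i\leftrightarrow\ell$ collapsing two of the three terms). Derivatives that fall on the $g^{i\ell}$ factors produce more $\lot$ contributions, so I arrive at
\begin{equation*}
    R_{jk} = \tfrac{1}{2} g^{i\ell}\bigl(\partial_i\partial_j g_{k\ell} + \partial_i\partial_k g_{j\ell} - \partial_i\partial_\ell g_{jk} - \partial_j\partial_k g_{i\ell}\bigr) + \lot.
\end{equation*}
Note that the term $-\tfrac{1}{2}g^{i\ell}\partial_i\partial_\ell g_{jk}$ is precisely $-\tfrac{1}{2}\Delta g_{jk}$ modulo $\lot$, since the coordinate expression~\eqref{eq:LB-coords2} for the Laplace--Beltrami operator acting on the function $g_{jk}$ differs from $g^{i\ell}\partial_i\partial_\ell g_{jk}$ by a term $-g^{i\ell}\Gamma^m_{i\ell}\partial_m g_{jk}$, which is $\lot$.

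The remaining task is to show that the three leftover second-derivative terms combine into something that vanishes in harmonic coordinates, up to $\lot$. For this I would introduce $F^\ell := g^{ij}\Gamma^\ell_{ij}$, which is identically zero in harmonic coordinates, and lower its index to get
\begin{equation*}
    F_k = g^{ij}\partial_i g_{jk} - \tfrac{1}{2}g^{ij}\partial_k g_{ij} = 0.
\end{equation*}
Differentiating this identity in $x^j$ and in $x^k$ and symmetrizing, the derivatives of the $g^{ij}$ factors again contribute only to $\lot$, so
\begin{equation*}
    \partial_j F_k + \partial_k F_j = g^{i\ell}\bigl(\partial_i\partial_j g_{k\ell} + \partial_i\partial_k g_{j\ell} - \partial_j\partial_k g_{i\ell}\bigr) + \lot = 0.
\end{equation*}
Plugging this back into the expression for $R_{jk}$ obtained above gives $R_{jk} = -\tfrac{1}{2}\Delta g_{jk} + \lot$, as desired.

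The only real ``obstacle'' here is careful bookkeeping, in particular remembering that every time a derivative falls on $g^{i\ell}$ rather than on a second derivative of $g$, one picks up a $\lot$ term, and being consistent about what ``lower order'' means (at most one derivative of $g$, with coefficients that are smooth functions of $g$ and its first derivatives). Everything else is a routine calculation once the harmonic coordinate identity $F_k = 0$ is brought in at the right moment; conceptually, the lemma just says that the principal symbol of $\tRc$, which is not elliptic on the space of metrics because of diffeomorphism invariance, \emph{becomes} elliptic once we gauge-fix by choosing harmonic coordinates.
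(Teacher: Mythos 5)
Your proof is correct and follows essentially the same route as the paper: expand $R_{jk}$ via the contracted Christoffel formula, discard quadratic Christoffel terms and derivatives hitting $g^{i\ell}$ as $\lot$, and then differentiate and symmetrize the harmonic gauge condition to kill the three non-Laplacian second-derivative terms. The only (cosmetic) difference is that you lower the index of $g^{ij}\Gamma^\ell_{ij}$ to get $F_k=0$ before differentiating, whereas the paper differentiates the condition with the upper index and contracts with $g_{kb}$ afterwards; the resulting identity and its use are the same.
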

\begin{proof}
In local harmonic coordinates we have $g^{ma} \Gamma^b_{ma} = 0$. Differentiating this equation with respect to $x^j$ and using~\eqref{eq:Christoffel}, we get
\begin{equation*}
0 = g^{ma} g^{b \l} \left( \frac{\partial^2 g_{m \l}}{\partial x^j \partial x^a} + \frac{\partial^2 g_{a \l}}{\partial x^j \partial x^m} - \frac{\partial^2 g_{ma}}{\partial x^j \partial x^{\l}} \right) + \lot.
\end{equation*}
Contracting the above with $g_{kb}$ we obtain
\begin{equation*}
g^{ma} \left( \frac{\partial^2 g_{mk}}{\partial x^j \partial x^a} + \frac{\partial^2 g_{ak}}{\partial x^j \partial x^m} - \frac{\partial^2 g_{ma}}{\partial x^j \partial x^k} \right) = \lot.
\end{equation*}
Interchanging $j, k$ and adding to the above gives
\begin{equation*}
g^{ma} \left( \frac{\partial^2 g_{mk}}{\partial x^j \partial x^a} + \frac{\partial^2 g_{mj}}{\partial x^k \partial x^a} + \frac{\partial^2 g_{ak}}{\partial x^j \partial x^m} + \frac{\partial^2 g_{aj}}{\partial x^k \partial x^m} \right) = 2 g^{ma} \frac{\partial^2 g_{ma}}{\partial x^j \partial x^k} + \lot.
\end{equation*}
Since $g^{ma}$ is symmetric, we can swap the roles of $m, a$ in the third and fourth terms above, finally obtaining
\begin{equation} \label{eq:temp-harmonic-coords}
g^{ma} \left( \frac{\partial^2 g_{mk}}{\partial x^j \partial x^a} + \frac{\partial^2 g_{mj}}{\partial x^k \partial x^a} \right) = g^{ma} \frac{\partial^2 g_{ma}}{\partial x^j \partial x^k} + \lot.
\end{equation}

From~\eqref{eq:Christoffel} and~\eqref{eq:curvatures} and the symmetry of $g^{ma}$ we also have
\begin{align*}
    R_{jk} & = R^m_{mjk} = \frac{\partial}{\partial x^m} \Gamma^m_{jk} - \frac{\partial}{\partial x^j} \Gamma^m_{mk} + \lot \\
    & = \frac{1}{2} g^{ma} \left( \frac{\partial^2 g_{ja}}{\partial x^m \partial x^k} + \frac{\partial^2 g_{ka}}{\partial x^m \partial x^j} - \frac{\partial^2 g_{jk}}{\partial x^m \partial x^a} \right) - \frac{1}{2} g^{ma} \left( \frac{\partial^2 g_{ma}}{\partial x^j \partial x^k} + \frac{\partial^2 g_{ka}}{\partial x^j \partial x^m} - \frac{\partial^2 g_{mk}}{\partial x^j \partial x^a} \right) + \lot \\
    & = \frac{1}{2} g^{ma} \left( \frac{\partial^2 g_{ja}}{\partial x^m \partial x^k} - \frac{\partial^2 g_{jk}}{\partial x^m \partial x^a} - \frac{\partial^2 g_{ma}}{\partial x^j \partial x^k} + \frac{\partial^2 g_{mk}}{\partial x^j \partial x^a} \right) + \lot \\
    & = \frac{1}{2} g^{ma} \left( \frac{\partial^2 g_{jm}}{\partial x^a \partial x^k} - \frac{\partial^2 g_{jk}}{\partial x^m \partial x^a} - \frac{\partial^2 g_{ma}}{\partial x^j \partial x^k} + \frac{\partial^2 g_{mk}}{\partial x^j \partial x^a} \right) + \lot.
\end{align*}
Substituting~\eqref{eq:temp-harmonic-coords} into the above, we obtain
\begin{equation*}
    R_{jk} = - \frac{1}{2} g^{ma} \frac{\partial^2 g_{jk}}{\partial x^m \partial x^a} + \lot,
\end{equation*}
which yields~\eqref{eq:Ricci-harmonic} because of~\eqref{eq:LB-coords2} and $g^{ma} \Gamma^k_{ma} = 0$.
\end{proof}

\begin{remark} \label{rmk:harmonic-coords}
It follows from Lemma~\ref{lemma:harmonic-coords} that the Ricci flow $\partial_t = - 2 \tRc$ has the property that in \emph{harmonic coordinates} with respect to $g(t_0)$, the component functions $g_{jk}$ of the metric are moving in the direction of their $g(t_0)$-Laplacians (plus lower order terms), so the evolution is in some sense ``heat like''. (But we emphasize that the coordinates $x^1, \ldots, x^n$ are \emph{not} necessarily harmonic with respect to $g(t_1)$ for any $t_1 \neq t_0$.) This discussion gives one justification for the choice of the constants $a=-2$ and $b=0$ in~\eqref{eq:flow-metrics}.
\end{remark}

\section{Weak parabolicity and diffeomorphism invariance} \label{sec:diff-inv}

In our discussion at the end of Section~\ref{sec:RF-motivation}, we implicitly assumed that the Ricci flow $\partial_t g = - 2 \tRc$ admits a (unique) solution. If the equation was strongly parabolic, this would follow from Theorem~\ref{thm:parabolic-STE}. Unfortunately, however, the Ricci flow is \emph{not} strongly parabolic. Rather it is ``weakly parabolic'', which informally means that it is ``parabolic transverse to the action of diffeomorphisms''. In this section we make this statement precise. Much more detail can be found in~\cite[Chapter 3]{Chow-Knopf}, although we present a significantly simplified argument.

The first thing we need to understand is in what sense the Ricci flow is \emph{diffeomorphism invariant}. Let $F \colon M \to M$ be a diffeomorphism. It is well-known (and easy to check) that
\begin{equation} \label{eq:Ricci-diffeo-inv}
    \tRc_{F^* g} = F^* (\tRc_g).
\end{equation}
That is, the Ricci tensor of the pullback metric $F^* g$ is the pullback of the Ricci tensor of $g$.

Let $\cU \subset \cS^2$ be the open subset of Riemannian metrics on $M$, and let $P \colon \cU \to \cS^2$ be the map $g \mapsto P(g) = - 2 \tRc_g$. Then the Ricci flow is $\partial_t g = P g$.

If $X$ is a vector field on $M$ with flow $\Theta_{\eps}$, then the curve $\gamma$ in $\cU$ given by $\gamma(\eps) = \Theta_{\eps}^* g$ passes through $g$ at $\eps = 0$ with initial velocity $\gamma'(0) = \cL_X g$. In this case, equation~\eqref{eq:Ricci-diffeo-inv} says $\tRc_{\Theta_{\eps}^* g} = \Theta_{\eps}^* (\tRc_g)$, and differentiating this with respect to $\eps$ at $\eps = 0$ yields
\begin{equation} \label{eq:symb-Ricci-1}
    (D_g \tRc) (\cL_X g) = \cL_X (\tRc_g).
\end{equation}
Using~\eqref{eq:divstar-defn}, equation~\eqref{eq:symb-Ricci-1} says that
\begin{equation*}
    (D_g \tRc \circ \Div^*) (X) = - \frac{1}{2} \cL_X (\tRc_g).
\end{equation*}
The right hand side above, as a map $X \mapsto - \frac{1}{2} \cL_X (\tRc_g)$, is only a \emph{first} order differential operator acting on $X$, and hence so is the composition on the left hand side. Taking principal symbols of both sides and using Remark~\ref{rmk:symbol-comp}, we deduce that
\begin{equation} \label{eq:symb-Ricci-2}
    \sigma_2 (D_g \tRc) (\xi) \circ \sigma_1 (\Div^*)( \xi) = 0.
\end{equation}
Equation~\eqref{eq:symb-Ricci-2} says that the image of $\sigma_1 (\Div^*) (\xi)$ lies in the kernel of $\sigma_2 (D_g \tRc) (\xi)$. Thus, if this image is nonzero, then the principal symbol $\sigma_2 (D_g \tRc) (\xi)$ cannot satisfy the condition~\eqref{eq:parabolic} for strong parabolicity.

To simplify notation, let's write
\begin{align*}
    B & := - 2 \sigma_2 (D_g \tRc) (\xi) \colon \Sym^2 (T_p^* M) \to \Sym^2 (T_p^* M), \\
    A & := - \sigma_1 (\Div^*)(\xi) \colon T_p^* M \to \Sym^2 (T_p^* M).
\end{align*}
We have introduced the factor of $-2$ so that $B$ is precisely the principal symbol of the linearization of the right hand side of the Ricci flow.

The next three results say that $\im A = \ker B \neq \{ 0 \}$, and that the operator $B$ is positive self-adjoint on the orthogonal complement of $\im A$. That is, the failure of the Ricci flow $\partial_t g = - 2 \tRc$ to be strongly parabolic is \emph{entirely} due to the diffeomorphism invariance of the Ricci tensor. Recall that $\xi \in T_p^* M$ is nonzero.

\begin{prop} \label{prop:image-divstar}
The map $A$ is injective with $n$-dimensional image.
\end{prop}
\begin{proof}
By~\eqref{eq:divstar} we have
\begin{equation} \label{eq:Asymb}
    2 A X = \xi \otimes X + X \otimes \xi.
\end{equation}
Suppose $A X = 0$. Taking the inner product of~\eqref{eq:Asymb} with $g$, or equivalently taking its trace, we get $\langle X, \xi \rangle = 0$. Then applying both sides of~\eqref{eq:Asymb} to $\xi$ we get $0 = 2 (AX) (\xi) = \xi \langle X, \xi \rangle + |\xi|^2 X$, and thus $X = 0$.
\end{proof}

\begin{prop} \label{prop:kernel-DRc}
The map $B$ has kernel precisely equal to the image of $A$.
\end{prop}
\begin{proof}
We have already seen that $\im A \subseteq \ker B$. Suppose $h \in \ker B$. From~\eqref{eq:linearization-Ricci}, we have
\begin{equation} \label{eq:Bsymb-coords}
    (B h)_{jk} = g^{ab} ( - \xi_a \xi_j h_{bk} - \xi_a \xi_k h_{bj} + \xi_a \xi_b h_{jk} + \xi_j \xi_k h_{ab} ).
\end{equation}
The above can be written invariantly as
\begin{equation} \label{eq:Bsymb}
    Bh = - \xi \otimes h(\xi) - h(\xi) \otimes \xi + |\xi|^2 h + (\tr h) \xi \otimes \xi. 
\end{equation}
Suppose $B h = 0$. Taking the trace of the above gives $0 = - 2 h(\xi, \xi) + 2 |\xi|^2 \tr h$, so that $\tr h = \frac{1}{|\xi|^2} h(\xi, \xi)$ and thus
\begin{equation*}
    0 = - \xi \otimes h(\xi) - h(\xi) \otimes \xi + |\xi|^2 h + \frac{1}{|\xi|^2} h(\xi, \xi) \xi \otimes \xi. 
\end{equation*}
The above then yields
\begin{equation} \label{eq:kernel-Bsymb}
    h = \frac{1}{|\xi|^2} ( \xi \otimes h(\xi) + h(\xi) \otimes \xi) - \frac{1}{|\xi|^4} h(\xi, \xi) \xi \otimes \xi.
\end{equation}
Define $X := \frac{1}{|\xi|^2} h(\xi) - \frac{1}{2 |\xi|^4} h(\xi, \xi) \xi \in T_p ^* M$. It is then clear that $h = \xi \otimes X + X \otimes \xi = 2 A X$.
\end{proof}

\begin{remark} \label{rmk:B-not-self-adjoint}
We observe from~\eqref{eq:Bsymb} that for $h, f \in \cS^2$, we have
\begin{equation*}
    \langle Bh, f \rangle = - 2 \langle h(\xi), f(\xi) \rangle + |\xi|^2 \langle h, f \rangle + (\tr h) f(\xi, \xi),
\end{equation*}
so $\langle Bh, f \rangle \neq \langle h, Bf \rangle$. Thus $B$ is \emph{not} self-adjoint. But see Remark~\ref{rmk:B-self-adjoint} below.
\end{remark}

\begin{prop} \label{prop:symb-perp}
On the orthogonal complement of $\im A$, the map $B$ satisfies $\langle B h, h \rangle = |\xi|^2 \, |h|^2$.
\end{prop}
\begin{proof}
Let $h \in S^2 (T^*_p M)$. Let
\begin{equation} \label{eq:X-deTurk}
    X = \frac{1}{|\xi|^2} h(\xi) - \frac{1}{2 |\xi|^4} h(\xi, \xi) \xi  
\end{equation}
as in the proof of Proposition~\ref{prop:kernel-DRc}, so that
\begin{equation} \label{eq:X-symp-perp2}
    \xi \otimes X + X \otimes \xi = \frac{1}{|\xi|^2} ( \xi \otimes h(\xi) + h(\xi) \otimes \xi) - \frac{1}{|\xi|^4} h(\xi, \xi) \xi \otimes \xi.
\end{equation}
Define
\begin{align} \nonumber
    \breve{h} & = h - (\xi \otimes X + X \otimes \xi) \\ \label{eq:h-breve}
    & = h - \frac{1}{|\xi|^2} \xi \otimes h(\xi) - \frac{1}{|\xi|^2} h(\xi) \otimes \xi + \frac{1}{|\xi|^4} h(\xi, \xi) \xi \otimes \xi.
\end{align}
We claim $\breve{h} \in (\im A)^{\perp}$. To see this, we compute
\begin{align*}
\langle \breve{h}, \xi \otimes Y + Y \otimes \xi \rangle & = \left \langle h - \frac{1}{|\xi|^2} \xi \otimes h(\xi) - \frac{1}{|\xi|^2} h(\xi) \otimes \xi + \frac{1}{|\xi|^4} h(\xi, \xi) \xi \otimes \xi, \xi \otimes Y + Y \otimes \xi \right \rangle \\
& = 2 h(\xi, Y) - 2 h(\xi, Y) - \frac{2}{|\xi|^2} \langle \xi, Y \rangle h(\xi, \xi) + \frac{2}{|\xi|^2} \langle \xi, Y \rangle h(\xi, \xi) \\
& = 0.
\end{align*}
Thus indeed $\breve{h}$ is the orthogonal projection of $h$ onto $(\im A)^{\perp}$. Next observe from~\eqref{eq:h-breve} that
\begin{equation*}
    \breve h(\xi) = h(\xi) - \frac{1}{|\xi|^2} h(\xi, \xi) \xi - h(\xi) + \frac{1}{|\xi|^2} h(\xi, \xi) \xi = 0
\end{equation*}
and
\begin{equation*}
    \breve h (\xi, \xi) = \langle \breve h (\xi), \xi \rangle = 0.
\end{equation*}
Using these and~\eqref{eq:Bsymb}, we compute that
\begin{equation*}
    B \breve h = |\xi|^2 \breve h + (\tr \breve h) \xi \otimes \xi,
\end{equation*}
and
\begin{equation*}
    \langle B \breve h, \breve h \rangle = |\xi|^2 |\breve h|^2 + (\tr \breve h) \breve h (\xi, \xi) = |\xi|^2 |\breve h|^2. \qedhere
\end{equation*}
\end{proof}

\begin{remark} \label{rmk:B-self-adjoint}
From the proof of Proposition~\ref{prop:symb-perp}, we see that for $\breve h, \breve f \in (\im A)^{\perp}$, we have
\begin{equation*}
    \langle B \breve h, \breve f \rangle = |\xi|^2 \langle \breve h, \breve f \rangle + (\tr \breve h) \breve f(\xi, \xi) = |\xi|^2 \langle \breve h, \breve f \rangle,
\end{equation*}
so $B$ \emph{is} self-adjoint when restricted to the subspace $(\im A)^{\perp}$. (Compare with Remark~\ref{rmk:B-not-self-adjoint} above.)
\end{remark}

We have shown that the map $- 2 \sigma_2 (D_g \tRc)(\xi)$ is \emph{positive definite self-adjoint} on $(\im A)^{\perp}$. This suggests that the Ricci flow $\partial_t g = - 2 \tRc$ somehow ``wants to be strongly parabolic'', and the only reason it fails to be is due to the diffeomorphism invariance. This discussion motivates the following generalization of Definition~\ref{defn:QL-parabolic}.

\begin{definition} \label{defn:weakly-parabolic}
Let $P$ be a nonlinear but quasilinear \emph{second order} differential operator on some open set $\cU$ in $\Gamma(E)$. The evolution equation
\begin{equation*}
   \partial_t s = P s 
\end{equation*}
is called \emph{weakly parabolic at $s \in \cU$} if and only if, for any $s \in \cU$, the map $B := (\sigma_2 D_s P)(\xi)$ has kernel precisely equal to the image of $A := - \sigma_1 (\Div^*) (\xi)$ and that the restriction of $B$ to $(\im A)^{\perp}$ is positive definite self-adjoint. (Strictly speaking, we need a \emph{uniform} lower bound $C$ such that~\eqref{eq:parabolic} with $s \in (\im A)^{\perp}$ holds \emph{at all points} of $M$, but this will be automatic if $M$ is compact, which we assume throughout.)
\end{definition}

Note that the observation in Remark~\ref{rmk:harmonic-coords} says that, \emph{in particular specially chosen local coordinates}, the Ricci flow resembles an honest heat equation, which is strongly parabolic. But \emph{choosing particular local coordinates} should be thought of as ``breaking the diffeomorphism invariance'', also known as \emph{gauge-fixing}. This strongly hints that one can establish some kind of correspondence between the Ricci flow and an honest strongly parabolic flow by somehow ``breaking the diffeomorphism invariance''. We do this in the next section.

\section{The DeTurck trick and short time existence for the Ricci flow} \label{sec:DeTurck}

The discussion in Section~\ref{sec:diff-inv} suggests that we look for a \emph{new} flow of the form $\partial_t g = P g$ where $P$ is a nonlinear but quasilinear second order differential operator whose linearization has principal symbol $\sigma_2 (D_g P) (\xi)$ which is precisely the map $h \mapsto |\xi|^2 h$. In such case, the flow $\partial_t g = P g$ would be strongly parabolic and have STE. Recalling that $B = - 2 \sigma_2 (D_g \tRc) (\xi)$, and comparing with~\eqref{eq:Bsymb}, if we write $P g = - 2 \tRc_g + Q g$, then we need
\begin{equation} \label{eq:symbQ-1}
    \sigma_2 (D_g Q) (\xi) (h) = \xi \otimes h(\xi) + h(\xi) \otimes \xi - (\tr h) \xi \otimes \xi. 
\end{equation}
The above is the principal symbol of the linearization
\begin{equation*}
    ((D_g Q)(h))_{jk} = g^{ab} \nabla_j \nabla_a h_{bk} + g^{ab} \nabla_k \nabla_a h_{bj} - g^{ab} \nabla_j \nabla_k h_{ab}.
\end{equation*}
We can write this as
\begin{equation} \label{eq:DeTurck-Q}
    ((D_g Q)(h))_{jk} = \nabla_j Y_k + \nabla_k Y_j = (\cL_Y g)_{jk},
\end{equation}
where $Y$ is the vector field
\begin{equation*}
    Y_k = g^{ab} \nabla_a h_{bk} - \frac{1}{2} \nabla_k (\tr h) = g^{ab} \left( \nabla_a h_{bk} - \frac{1}{2} \nabla_k h_{ab} \right).
\end{equation*}
Using~\eqref{eq:linearization-LC}, we can rewrite this as
\begin{equation} \label{eq:DeTurck-Y}
    Y_{\l} = \frac{1}{2} g^{ab} \Big( \nabla_a h_{b\l} + \nabla_b h_{a\l} - \nabla_{\l} h_{ab} \Big) = g_{k \l} g^{ab} ((D_g \LC^0) (h))^k_{ab}.
\end{equation}
The above computation motivates the following definition. Fix a reference metric $g_0$ on $M$. Consider the map $\W \colon \cU \to \Gamma(TM)$ given by
\begin{equation} \label{eq:DeTurck-W}
    (\W(g))^k = g^{ab} (\LC^0 (g))^k_{ab} = g^{ij} ( (\Gamma^g)^{k}_{ij} - (\Gamma^{g_{0}})^{k}_{ij}) .
\end{equation}
That is, to construct $\W(g)$, we take the trace with respect to $g$ of $\LC^0 (g) \in \Gamma(T^* M \otimes T M \otimes T^* M)$ to obtain a vector field. It is then easy to see from~\eqref{eq:DeTurck-W} and~\eqref{eq:DeTurck-Y} that the first order nonlinear but quasilinear map $\W$ has linearization $(D_g \W)(h) = Y + \lot$.

Now define $Q \colon \cU \to \cS^2$ to be the composition
\begin{equation*}
    Qg = \cL_{\W(g)} g = - 2 (\Div^* \circ \W) (g),
\end{equation*}
where the second equality above is by~\eqref{eq:divstar-defn}. This is a second order nonlinear but quasilinear differential operator. Since $\Div^*$ is linear, taking the linearization at $g$ yields
\begin{equation*}
    (D_g Q) (h) = - 2 (\Div^* \circ D_g \W) (h) = - 2 \Div^* Y + \lot = \cL_Y g + \lot.
\end{equation*}
Thus the principal symbol of the above does indeed have the required property. To summarize, we have shown that the geometric flow
\begin{equation} \label{eq:Ricci-DeTurck}
    \partial_t g = - 2 \tRc_g + \cL_{\W(g)} g = - 2 \tRc_g - 2 (\Div^* \circ \W)(g)
\end{equation}
is strongly parabolic and thus has short time existence and uniqueness.

The flow~\eqref{eq:Ricci-DeTurck} is called the \emph{Ricci-DeTurck flow}. It is clearly not diffeomorphism invariant. That is, $Q(F^* g) \neq F^* (Qg)$. This is because $\W$ is defined with respect to the choice of a fixed reference metric $g_0$, which we might as well take to be the initial metric $g(0)$ of the flow.

We have thus succeeded in finding a (more or less canonical, modulo lower order terms) modification of the Ricci flow which \emph{is} strongly parabolic. What remains to show is that we can use this Ricci-DeTurck flow to prove that the Ricci flow itself has short time existence and uniqueness, even though it is only weakly parabolic. We will do this by in some sense establishing a correspondence between solutions of these two flows, in the following result.

\begin{theorem}[\cite{DeTurck, Hamilton-3}] \label{thm:RF-STE}
Let $g_0$ be a Riemannian metric on $M$. There exists $\eps > 0$ and a unique smooth solution $g \colon [0, \eps) \to \cU$ satisfying
\begin{equation*}
    \partial_t = - 2 \tRc_g \quad \text{on $[0, \eps)$}, \qquad g(0) = g_0.
\end{equation*}
That is, the Ricci flow has short time existence and uniqueness.
\end{theorem}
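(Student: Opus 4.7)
The strategy is the DeTurck trick set up above: the Ricci-DeTurck flow~\eqref{eq:Ricci-DeTurck} has been shown to be strongly parabolic, so by Theorem~\ref{thm:parabolic-STE} there is a unique smooth solution $\tilde g(t)$ to~\eqref{eq:Ricci-DeTurck} on some interval $[0, \eps)$ with $\tilde g(0) = g_0$. Because $M$ is compact, the ODE on $M$ generated by the time-dependent vector field $-\W(\tilde g(t))$,
\begin{equation*}
\partial_t \Phi_t(p) = -\W(\tilde g(t))\big(\Phi_t(p)\big), \qquad \Phi_0 = \id,
\end{equation*}
has a unique smooth solution $\Phi_t$ through diffeomorphisms on some (possibly shorter) interval $[0, \eps')$. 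Setting $g(t) := \Phi_t^* \tilde g(t)$ and using the standard pullback identity $\partial_t(\Phi_t^* \alpha_t) = \Phi_t^*(\partial_t \alpha_t + \cL_{X_t}\alpha_t)$ for $X_t = -\W(\tilde g(t))$, together with the diffeomorphism invariance~\eqref{eq:Ricci-diffeo-inv} of the Ricci tensor, the $\cL_{\W(\tilde g)}$ contributions cancel and I obtain $\partial_t g(t) = -2\tRc_{g(t)}$ with $g(0) = g_0$.

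\textbf{Uniqueness.} Conversely, let $g(t)$ be any smooth Ricci flow on $[0, \eps)$ with $g(0) = g_0$. The plan is to convert $g(t)$ into a Ricci-DeTurck solution by a diffeomorphism gauge-change, using the harmonic map heat flow of Section~\ref{sec:harmonic-map-flow}. Solve
\begin{equation*}
\partial_t F_t = \Delta_{g(t), g_0} F_t, \qquad F_0 = \id,
\end{equation*}
which is strongly parabolic; since $F_0 = \id$ is a diffeomorphism and being a diffeomorphism is an open condition in $C^1$, $F_t$ remains a diffeomorphism on some interval. Define $\tilde g(t) := (F_t^{-1})^* g(t)$, so $\tilde g(0) = g_0$. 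By Proposition~\ref{prop:map-Laplacian-diffeo}, $\Delta_{g(t), g_0} F_t = -\W(\tilde g(t)) \circ F_t$, so $F_t$ is precisely the flow of $-\W(\tilde g(t))$. Differentiating $g(t) = F_t^* \tilde g(t)$ via the same pullback identity, substituting $\partial_t g(t) = -2\tRc_{g(t)} = -2 F_t^* \tRc_{\tilde g(t)}$, and using injectivity of $F_t^*$ gives
\begin{equation*}
\partial_t \tilde g(t) = -2\tRc_{\tilde g(t)} + \cL_{\W(\tilde g(t))}\tilde g(t),
\end{equation*}
which is exactly the Ricci-DeTurck flow. Theorem~\ref{thm:parabolic-STE} then forces $\tilde g(t)$ to coincide with the Ricci-DeTurck solution produced in the existence step. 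With $\tilde g(t)$ thus determined, the ODE for $F_t$ has uniquely determined coefficients and initial data, so $F_t$ is unique, and hence so is $g(t) = F_t^* \tilde g(t)$.

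\textbf{Main obstacle.} The one technical input that must be imported is short-time existence and uniqueness for the harmonic map heat flow into the fixed target $(M, g_0)$ with time-dependent domain metric $g(t)$. This lives on the infinite-dimensional manifold $\mathrm{Map}(M, M)$ rather than on sections of a vector bundle, and the coefficients are time-dependent, so one needs a version of Theorem~\ref{thm:parabolic-STE} adapted to this setting, as alluded to in Section~\ref{sec:harmonic-map-flow}. Granting that input, the remainder of the argument reduces to the pullback identity for time-dependent diffeomorphisms, the openness of the diffeomorphism condition (so that $\Phi_t$ and $F_t$ remain diffeomorphisms near $t=0$), and standard ODE uniqueness on a compact manifold.
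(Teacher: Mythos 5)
Your proposal is correct and follows essentially the same route as the paper: existence via the strongly parabolic Ricci--DeTurck flow pulled back by the flow of $-\W(\tilde g(t))$, and uniqueness by converting any Ricci flow solution into a Ricci--DeTurck solution through the harmonic map heat flow (via Proposition~\ref{prop:map-Laplacian-diffeo}) and then invoking uniqueness for the DeTurck flow followed by ODE uniqueness for the diffeomorphisms. The technical input you flag (STE/uniqueness for the harmonic map heat flow with time-dependent domain metric) is exactly the same ingredient the paper relies on.
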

\begin{proof}
Since the Ricci-DeTurck flow~\eqref{eq:Ricci-DeTurck} is strongly parabolic, there exists $\eps > 0$ and a unique smooth solution to~\eqref{eq:Ricci-DeTurck} with initial condition $g(0) = g_0$, by Theorem~\ref{thm:parabolic-STE}. Define a one-parameter family of diffeomorphisms $F_t \colon M \to M$ by the flow of the non-autonomous vector field $-\W(g(t))$. That is,
\begin{equation} \label{eq:DeTurck-diffeos}
    \partial_t F_t(p) = - \W(g(t))_{F_t(p)} \quad \text{for $t \in [0, \eps)$}, \qquad F_0 = \id_M.
\end{equation}
Because $M$ is compact, the flow of the non-autonomous vector field $-\W(g(t))$ exists and is defined on all of $M$ for as long as $\W(g(t))$ is defined, that is for $t \in [0, \eps)$. Now consider the metrics $\wt{g}(t) = F_t^* g(t)$ for $t \in [0, \eps)$. We claim that $\wt{g}$ satisfies the Ricci flow on $[0, \eps)$ with initial condition $g_0$. Since $F_0 = \id_M$, we have $\wt{g}(0) = g(0) = g_0$. We also have
\begin{align} \nonumber
\partial_t \wt{g} & = \partial_t (F_t^* g) = F_t^* (\cL_{-\W(g)} g) + F_t^* (\partial_t g) \\ \nonumber
& = F_t^* ( - \cL_{\W(g)} g + (-2 \tRc_g + \cL_{\W(g)} g) ) \\ \label{eq:DeTurck-equivalence}
& = - 2 F_t^* (\tRc_g) = - 2 \tRc_{F_t^* g} = - 2 \tRc_{\wt{g}},
\end{align}
and the claim is proved. Thus a smooth solution $\wt{g}$ to the Ricci flow exists on $[0, \eps)$.

It remains to establish uniqueness. To do this, we first investigate what the flow~\eqref{eq:DeTurck-diffeos} corresponds to in terms of $\wt{g}(t)$. From~\eqref{eq:DeTurck-W} we have
\begin{equation*}
    \partial_t F_t = - \W = - g^{ab} ((\Gamma_g)^k_{ab} - (\Gamma_0)^k_{ab}),
\end{equation*}
where $(\Gamma_0)^k_{ab}$ are the Christoffel symbols of $g_0$. Since $\wt{g} = F_t^* g$, we have $g = (F_t^{-1})^* \wt{g}$, and we can write the above as
\begin{equation*}
    \partial_t F_t = ( (F_t^{-1})^* \wt{g} )^{ab} (- (\Gamma_{(F_t^{-1})^* \wt{g}})^k_{ab} + (\Gamma_0)^k_{ab}),
\end{equation*}
which by Proposition~\ref{prop:map-Laplacian-diffeo} becomes
\begin{equation} \label{eq:RF-HMHF}
    \partial_t F_t = \Delta_{\wt{g}(t), g_0} F_t.
\end{equation}
That is, the diffeomorphisms $F_t$ evolve according to the harmonic map heat flow~\eqref{eq:HMHF} with domain metric $\wt{g}(t)$ and codomain metric $g_0$. Stated more precisely, the harmonic map heat flow of $F_t$ with domain metric $\wt{g}(t)$ and codomain metric $g_0$ is equivalent to the flow of the vector field $-\W(g(t))$ where $g(t) = (F_t^{-1})^* \wt{g} (t)$. In this case, one of the metrics for the harmonic map heat flow is itself changing in time, but one can show that the flow still has short time existence and uniqueness. (That is, it is still strictly parabolic.)

Now suppose $\wt{g}_1$ and $\wt{g}_2$ are two solutions of the Ricci flow on $[0, \eps)$ with initial condition $g_0$. Let $F_1 (t)$ and $F_2 (t)$ be the solutions to the harmonic map heat flow~\eqref{eq:RF-HMHF} for these metrics, with initial conditions $F_1 (0) = F_2 (0) = \id_M$. For $k = 1, 2$, define $g_k (t) = (F_k^{-1} (t))^* \wt{g}_k (t)$. The computations in~\eqref{eq:DeTurck-equivalence} are reversible to show that $g_1 (t)$ and $g_2 (t)$ are both solutions of the Ricci-DeTurck flow with the same initial condition $g_0$. By uniqueness of the Ricci-DeTurck flow, $g_1 (t) = g_2 (t) = g(t)$ on $[0, \eps)$. Thus, the harmonic map heat flows~\eqref{eq:RF-HMHF} for $F_1 (t)$ and $F_2 (t)$ are flows of the \emph{same} vector field $- \W(g(t))$, so again by uniqueness we have $F_1(t) = F_2 (t)$ on $[0, \eps)$ and thus finally $\wt{g}_1 (t) = \wt{g}_2 (t)$ on $[0, \eps)$.
\end{proof}

\begin{remark} \label{rmk:DeTurck-Einstein-tensor}
The original formulation of the DeTurck trick demonstrated an interesting connection to the \emph{Einstein} operator $\G \colon \cS^2 \to \cS^2$ given by $\G(h) = h - \frac{1}{2} (\tr h) g$. Note that $\G(g)$ is the \emph{Einsten tensor} of $g$, which is divergence-free by the contracted Riemannian second Bianchi identity. It was shown that the linearization of the Ricci operator is
\begin{equation*}
    (D_g \tRc) (h) = - \frac{1}{2} \Delta_L h - (\Div^* \circ \Div) \G (h),
\end{equation*}
where $\Delta_L$ is the Lichnerowicz Laplacian. (See~\cite[Section 3.3]{Chow-Knopf} for more details.)
\end{remark}

\begin{remark} \label{rmk:STE-noncompact}
Somewhat later, Shi~\cite{Shi} established short-time existence and uniqueness for the Ricci flow on \emph{complete noncompact} Riemannian manifolds with \emph{bounded curvature}.
\end{remark}

\section{Some known analytic results on the Ricci flow} \label{sec:RF-analysis}

In this section we give a very brief and very incomplete survey of some known analytic results on the Ricci flow, essentially to motivate analogous questions for flows of $\Gt$-structures later. References for some of these results are Cao--Chen--Zhu~\cite{CCZ}, Chow--Lu--Ni~\cite{CLN}, and Topping~\cite{Topping}.

\textbf{Preservation of initial properties.} Sometimes one can show that if the initial metric $g_0$ possesses a certain property, often but not always expressed by an \emph{inequality}, then that property is \emph{preserved along the flow}. That is, the metric $g(t)$ also possesses this property for all time for which the flow exists. These kinds of ``preservation of property'' results are almost always proved using the \emph{maximum principle}, which is one of the key analytic tools used in the study of geometric flows.

Some examples of such results are:
\begin{itemize}
    \item A lower bound for scalar curvature $R$ is preserved by the Ricci flow. If $R(g_0) \geq C$, then $R(g(t)) \geq C$ for all $t$ for which the solution exists.
    \item In dimension $n=3$, positive Ricci curvature is preserved by the Ricci flow. If $\tRc(g(0)) \geq C g_0$ for some $C > 0$, then $\tRc(g(t)) \geq C g(t)$ for all $t$ for which the solution exists. This is a very special feature of dimension $3$, because in this case one may recover the Riemann curvature tensor algebraically from the Ricci tensor and the metric.
    \item In any dimension, positivity of the Riemann curvature as a self-adjoint operator on the space of $2$-forms is preserved by the Ricci flow. (Recall that a metric has positive curvature operator $\tRm$ if the eigenvalues of $\tRm$ are positive.)
    \item In higher dimensions $n \geq 4$, certain curvature preservation results are known in some cases. See, for example, B\"ohm--Wilking~\cite{Bohm-Wilking}.
    \item If the initial metric $g_0$ is part of the package of a $\U(m)$-structure $(g, J, \omega)$ on $M^{2m}$, then the Ricci flow of metrics corresponds to a flow of $2$-forms $\omega(t)$ via $\omega(u, v) = g(J u, v)$, where the almost complex structure $J$ is fixed. In the case that $J$ is \emph{integrable} (so we have a holomorphic manifold), and the $2$-form $\omega$ is closed, so that $g$ is a \emph{K\"ahler metric}, then the flow of $2$-forms stays in the same cohomology class. That is, $[\omega(t)] = [\omega(0)]$ for all $t$ for which the solution exists. For this reason, Ricci flow on a K\"ahler manifold is called \emph{K\"ahler-Ricci flow}. In this case $\partial_t \omega_t = - \rho_{\omega(t)}$ via $\omega(u, v) = g(J u, v)$, where $\rho(u, v) = \tRc(Ju, v)$ is the Ricci form of $g$. Moreover, the $\partial\ol{\partial}$-lemma of K\"ahler geometry allows us to write the flow in this case as the flow of a \emph{scalar K\"ahler potential} function. See Cao~\cite{Cao} or Tosatti~\cite{Topping} for more details.
\end{itemize}

\textbf{Characterization of the blow-up time, long time existence, and convergence.} The Ricci flow exists for as long as the Riemann curvature tensor remains bounded. More precisely, if a solution exists on a \emph{maximal} time interval $[0, T)$ with $0 < T \leq \infty$, and if $T < \infty$, then
\begin{equation*}
    \lim_{t \to T^-} \sup_{p \in M} | \tRm_{g_p (t)} |_{g_p (t)} = \infty.
\end{equation*}
(In fact, this has been improved to the \emph{Ricci tensor} blowing up at the singular time. See Sesum~\cite{Sesum} for details.)

Thus, if one can somehow prove in a particular case that the Riemann curvature tensor remains uniformly bounded in space and time, so that it \emph{cannot} blow up, then the Ricci flow has \emph{long time existence} in this case. Even in cases when the Ricci flow has long time existence (LTE), it need not necessarily converge to some limit metric as $t \to \infty$. There are many cases where this is known (strictly speaking, one needs to actually consider the \emph{normalized} Ricci flow so that the total volume remains constant). For example:
\begin{itemize}
    \item In dimension $n=2$, the Ricci flow always has LTE and (if properly normalized) it converges to a metric of constant sectional curvature. (This gives a proof of the classical \emph{Uniformization Theorem}.)
    \item In dimension $n=3$, if the initial metric $g_0$ has positive Ricci curvature, then the Ricci flow has LTE and (if properly normalized) it converges to a metric of constant positive sectional curvature. This is a theorem of Hamilton~\cite{Hamilton-3}.
    \item In higher dimensions $n \geq 4$, partial results are known in some cases. See, for example, the recent survey by Bamler~\cite{Bamler}.
    \item Suppose that $(M^{2m}, g_0, J, \omega_0)$ is a K\"ahler manifold, and that its first Chern class $c_1 (M)$ vanishes. Then the K\"ahler-Ricci flow exists for all time and converges to a Ricci-flat K\"ahler metric, also known as a \emph{Calabi-Yau} metric. This is a parabolic proof of the $c_1 = 0$ case of the Calabi--Yau Theorem. See Cao~\cite{Cao} for details.
\end{itemize}

\section{Is Ricci flow the only game in town?} \label{sec:RB-flow}

Recall that in Section~\ref{sec:RF-motivation} we argued that any reasonable geometric flow of Riemannian metrics, which has a hope of being qualitatively ``heat like'' or diffusive, must be of the form
\begin{equation} \label{eq:general-RBF}
    \partial_t g = a \tRc + b R g,
\end{equation}
for some constants $a, b \in \R$, where $\tRc$ is the Ricci curvature and $R$ is the scalar curvature. This is because $\{ \tRc, Rg \}$ is a basis for the space of second order differential invariants of a Riemannian metric $g$ which are symmetric $2$-tensors. The Ricci flow corresponds to $a=-2, b=0$. In this section, we consider more general values of $a, b$ for which a DeTurck type trick can be applied, to get short time existence and uniqueness. We do this to motivate the analogous problem for flows of $\Gt$-structures, which we discuss in Section~\ref{sec:flows2}.

For simplicity, we will keep $a = -2$, and ask what values of $b$ sufficiently close to $0$ still work to obtain short time existence and uniqueness via a DeTurck type trick. This flow is called the \emph{Ricci--Bourguignon flow}.

Thus let $P g = - 2 \tRc_g + b R g$, and define $C := (\sigma_2 P) (\xi)$. From~\eqref{eq:linearization-scalar}, we immediately see that
\begin{equation*}
    (\sigma_2 (Rg)(\xi)) (h) = - |\xi|^2 (\tr h) g + h(\xi, \xi) g.
\end{equation*}
Hence, in the notation of Section~\ref{sec:diff-inv}, we have
\begin{align} \nonumber
   Ch & = B h + b (- |\xi|^2 (\tr h) g + h(\xi, \xi) g ) \\ \label{eq:RB-temp1}
   & = |\xi|^2 h - \xi \otimes h(\xi) - h(\xi) \otimes \xi + (\tr h) \xi \otimes \xi - b |\xi|^2 (\tr h) g + b h(\xi, \xi) g.
\end{align}

First we show that, as in the Ricci flow case, the kernel of the principal symbol of the right hand side of the flow $\partial_t g = -2 \tRc + b R g$ is again entirely due diffeomorphism invariance.

\begin{prop} \label{prop:kernel-DP-RB}
If $n \neq 2$ or if $b \neq 1$, then the map $C$ has kernel precisely equal to the image of $A$. (If $n=2$ \emph{and} $b=1$, then $P = - 2 \tRc + R g = 0$ identically, so this flow is constant and we can ignore this trivial case.)
\end{prop}
\begin{proof}
The diffeomorphism invariance of the map $g \mapsto Rg$ again yields $\im A \subset \ker C$, but this can also be easily checked directly.

Conversely, suppose $C h = 0$. Taking the trace of~\eqref{eq:RB-temp1} yields
\begin{align*}
    0 & = |\xi|^2 (\tr h) - 2 h(\xi, \xi) + |\xi|^2 (\tr h) - n b |\xi|^2 (\tr h) + n b h(\xi, \xi) \\
    & = (2 - n b) |\xi|^2 (\tr h) + (nb - 2) h(\xi, \xi).
\end{align*}
Thus, if $b \neq \frac{2}{n}$, we get $\tr h = \frac{1}{|\xi|^2} h(\xi, \xi)$. Alternatively, from~\eqref{eq:RB-temp1} we could compute $0 = (Ch)(\xi, \xi)$, which gives
\begin{equation*}
    0 = |\xi|^2 h(\xi, \xi) - 2 |\xi|^2 h(\xi, \xi) + |\xi|^4 \tr h - b |\xi|^4 \tr h + b |\xi|^2 h(\xi, \xi),
\end{equation*}
which also implies that $\tr h = \frac{1}{|\xi|^2} h(\xi, \xi)$ if $b \neq 1$. Substituting this back into~\eqref{eq:RB-temp1} yields
\begin{equation*}
    h = \frac{1}{|\xi|^2} \left( \xi \otimes h(\xi) + h(\xi) \otimes \xi - \frac{1}{|\xi|^2} h(\xi, \xi) \xi \otimes \xi \right),
\end{equation*}
so that if we define $X := \frac{1}{|\xi|^2} h(\xi) - \frac{1}{2 |\xi|^4} h(\xi, \xi) \xi$ as before, then we again have $h = 2 A X$.
\end{proof}

Again motivated by what worked in the Ricci flow case, we attempt to use the same vector field $\W(g)$ to obtain a ``Ricci--Bourguignon--DeTurck flow''. That is, with $\W(g)$ as in~\eqref{eq:DeTurck-W}, we consider the flow
\begin{equation} \label{eq:RB-DeTurck}
        \partial_t g = - 2 \tRc_g + b R g + \cL_{\W(g)} g = - 2 \tRc_g + b R g - 2 (\Div^* \circ \W)(g).
\end{equation}
If we can show that~\eqref{eq:RB-DeTurck} is strongly parabolic, then the same argument as for the Ricci flow will establish short time existence and uniqueness for the Ricci--Bourguignon flow. (Of course, we intuitively expect that this will work only for a range of $b$ close to $b=0$).

Comparing with Section~\ref{sec:DeTurck}, if we consider the flow
\begin{equation*}
\partial_t g = \wt{P} g = P g + Q g = - 2 \tRc_g + b R g + Q g,
\end{equation*}
with $Q$ satisfying~\eqref{eq:symbQ-1}, then from~\eqref{eq:RB-temp1} we get
\begin{equation} \label{eq:symb-RB-DeTurck}
    \wt{C} h := \sigma_2 (D_g \wt{P}) (\xi) (h) = |\xi|^2 h - b |\xi|^2 (\tr h) g + b h(\xi, \xi) g.
\end{equation}

\begin{prop} \label{prop:RB-DeTurck}
The flow~\eqref{eq:RB-DeTurck} is strongly parabolic if
\begin{equation*}
    \left| b + \frac{2(n-1)}{n} \right| < \frac{2}{n} \sqrt{n^2 - n + 1}.
\end{equation*}
(Note that $b=0$ does indeed lie in this range).
\end{prop}
\begin{proof}
We need to show that $\langle \wt{C} h, h \rangle$ is strictly positive for all $h \neq 0$. (Recall that we are always assuming that $M$ is compact, so the uniformity will follow automatically).

Write $h = \lambda g + h^0$ where $\lambda = \frac{1}{n} (\tr h)$ and the trace-free part $h^0$ is orthogonal to $g$. Then $|h|^2 = n \lambda^2 + |h^0|^2$ and $h(\xi, \xi) = \lambda |\xi|^2 + h^0 (\xi, \xi)$. Using these and~\eqref{eq:symb-RB-DeTurck}, we compute
\begin{align*}
    \langle \wt{C} h, h \rangle & = \langle |\xi|^2 h - b |\xi|^2 (\tr h) g + b h(\xi, \xi) g, h \rangle \\
    & = |\xi|^2 |h|^2 - b |\xi|^2 (\tr h)^2 + b h(\xi, \xi) (\tr h) \\
    & = |\xi|^2 (n \lambda^2 + |h^0|^2) - b |\xi|^2 n^2 \lambda^2 + b(\lambda |\xi|^2 + h^0(\xi, \xi)) n \lambda \\
    & = |\xi|^2 \lambda^2 (n - b n^2 + b n) + |\xi|^2 |h^0|^2 + b n \lambda h^0 (\xi, \xi).
\end{align*}
Substituting the sharp inequality $h^0 (\xi, \xi) = \langle h^0 (\xi), \xi \rangle \geq - |h^0 (\xi)| \, |\xi| \geq - |h^0| \, |\xi|^2$ into the above, we get
\begin{equation*}
    \langle \wt{C} h, h \rangle \geq |\xi|^2 \lambda^2 (n - b n^2 + b n) +  |\xi|^2 |h^0|^2 - b n \lambda |\xi|^2 |h^0|.
\end{equation*}
Writing $|h^0| = \mu$ for simplicity, the above becomes
\begin{equation*}
     \langle \wt{C} h, h \rangle \geq |\xi|^2 ( \lambda^2 (n - b n^2 + b n) + \mu^2 - b n \lambda \mu ).
\end{equation*}
The proof will be complete if we can show that the quadratic form
\begin{equation*}
    \lambda^2 (n - b n^2 + b n) + \mu^2 - b n \lambda \mu
\end{equation*}
is positive definite. This quadratic form has matrix
\begin{equation*}
    \begin{pmatrix} 1 & - \frac{1}{2} b n \\ - \frac{1}{2} b n & n - b n^2 + b n \end{pmatrix}
\end{equation*}
which is positive definite if and only if $n - b n^2 + b n - \frac{1}{4} b^2 n^2 > 0$, or equivalently, if and only if
\begin{equation*}
    b^2 + \frac{4}{n} (n - 1) b - \frac{4}{n} < 0.
\end{equation*}
This occurs if and only if $b$ lies in the interval
\begin{equation*}
    \left( - \frac{2(n-1)}{n} - \frac{2}{n} \sqrt{n^2 - n + 1},  - \frac{2(n-1)}{n} + \frac{2}{n} \sqrt{n^2 - n + 1}\right)
\end{equation*}
as claimed.
\end{proof}

\begin{remark} \label{rmk:RB-STE}
In~Catino--Cremaschi--Djadli--Mantegazza--Mazzieri~\cite[Theorem 2.1]{CCDMM} the authors claim that the Ricci--Bourguignon flow is amenable to a DeTurck type trick with the same vector field $\W(g)$ as in the Ricci flow, if $b < \frac{1}{n-1}$. (Our $b$ is their $2 \rho$ in~\cite{CCDMM}). However, their proof is incorrect. They correctly compute the eigenvalues of $\wt{C}$, and show that they are all positive if $b < \frac{1}{n-1}$. However, this is not sufficient for $\langle \wt{C} h, h \rangle$ to be positive for all $h \neq 0$, because $\wt{C}$ is \emph{not self-adjoint}. Let $\wt{C} = \wt{C}^+ + \wt{C}^-$ be the decomposition of $\wt{C}$ into self-adjoint and skew-adjoint parts. Then $\langle \wt{C} h, h \rangle = \langle \wt{C}^+ h, h \rangle$, and positivity of this for $h \neq 0$ is equivalent to all the eigenvalues of $\wt{C}^+$ being positive. This is a different condition. As an example, consider the matrix $G = \begin{pmatrix} 1 & 4 \\ 0 & 1 \end{pmatrix}$, which has all positive eigenvalues. But for $v = \begin{pmatrix} 1 \\ - 1 \end{pmatrix}$, we have $\langle G v, v \rangle = - 2 < 0$.
\end{remark}

\section{Exercises} \label{sec:Ricci-flow}

\begin{exercise}
Let $(M, g)$ be a Riemannian manifold and let $F \colon M \to M$ be a \emph{diffeomorphism}. Then $F^* g$ is again a metric on $M$. Let $\tRm_g$, $\tRc_g$, $R_g$ be the Riemann, Ricci, and scalar curvatures of $g$, respectively. Show that
\begin{equation*}
    \tRm_{F^* g} = F^* \tRm_g, \qquad \tRc_{F^* g} = F^* \tRc_g, \qquad R_{F^* g} = F^* R_g.
\end{equation*}
\end{exercise}

\begin{exercise}
Recall from~\eqref{eq:DeTurck-W} that
\begin{equation*}
    (\W(g))^k = g^{ij} ( (\Gamma^g)^{k}_{ij} - (\Gamma^{g_{0}})^{k}_{ij})
\end{equation*}
with respect to a reference metric $g_0$, so $\W$ is first order nonlinear but quasilinear map $\W \colon \cU \to \Gamma(TM)$ where $\cU$ is the space of Riemannian metrics on $M$. Show $\W$ has linearization $(D_g \W)(h) = Y + \lot$ where
\begin{equation*}
    Y_{\l} = \frac{1}{2} g^{ab} \Big( \nabla_a h_{b\l} + \nabla_b h_{a\l} - \nabla_{\l} h_{ab} \Big).
\end{equation*}
\end{exercise}

\chapter{Introduction to $\Gt$-structures and flows of $\Gt$-structures} \label{chapter:G2}

In this chapter, we give an introduction to the geometry of $\Gt$-structures, including the important \emph{contraction identities} which allow us to effectively compute many geometric quantities without need to resort to abstract representation theory. We mention some important results in $\Gt$-geometry, including the Joyce existence theorem and the Hitchin variational characterization, which together motivate the definition of the $\Gt$-Laplacian flow introduced by Bryant. We then survey several natural flows of $\Gt$-structures and discuss various known results in each case. For a more leisurely introduction to $\Gt$-structures, see Karigiannis~\cite{K-intro}.

\section{The basics of $\Gt$-structures} \label{sec:g2-basics}

Let $M^7$ be a smooth $7$-manifold. A $\Gt$-structure on $M$ is a reduction of the structure group of the frame bundle from $\GL(7, \R)$ to the Lie group $\Gt$, which is a Lie subgroup of $\SO(7)$. As such, a $\Gt$-structure induces a Riemannian metric and an orientation. In fact, we can give a very concrete description of a $\Gt$-structure that is much more suitable for explicit computation and which allows us to think of flows of $\Gt$-structures in a similar way as flows of Riemannian metrics.

To make this precise, let $e_1, \ldots, e_7$ be the standard oriented orthonormal basis of $\R^7$, and define a $3$-form $\ph_0$ on $\R^7$ by
\begin{equation} \label{eq:standard-ph}
\begin{aligned}
    \ph_0 & = e_1 \w e_2 \w e_3 + e_1 \w (e_4 \w e_5 - e_6 \w e_7) \\
    & \qquad {} + e_2 \w (e_4 \w e_6 - e_7 \w e_5) + e_3 \w (e_4 \w e_7 - e_5 \w e_6).
\end{aligned}
\end{equation}
The form $\ph_0$ is expressed in the above particular way so that it can be easily remembered because $\ph_0 = e_1 \w e_2 \w e_3 + e_1 \w \omega_1 + e_2 \w \omega_2 + e_3 \w \omega_3$ where $\omega_1, \omega_2, \omega_3$ are the standard basis of anti-self-dual $2$-forms on $\R^4$ with oriented orthonormal basis $e_4, e_5, e_6, e_7$.

Let $\ps_0 = \star_0 \ph_0$ be the Hodge dual $4$-form. One can verify easily that
\begin{equation} \label{eq:standard-ps}
\begin{aligned}
    \ps_0 & = e_4 \w e_5 \w e_6 \w e_7 - e_2 \w e_3 \w (e_4 \w e_5 - e_6 \w e_7) \\
    & \qquad {} - e_3 \w e_1 \w (e_4 \w e_6 - e_7 \w e_5) - e_1 \w e_2 \w (e_4 \w e_7 - e_5 \w e_6),
\end{aligned}
\end{equation}
or, equivalently, that $\ps_0 = \vol_4 - e_2 \w e_3 \w \omega_1 - e_3 \w e_1 \w \omega_2 - e_1 \w e_2 \w \omega_3$, where $\vol_4 = e_3 \w e_5 \w e_6 \w e_7$ is the volume form on $\R^4$ with oriented orthonormal basis $e_4, e_5, e_6, e_7$.

It is straightforward to verify that for $X, Y \in \R^7$, we have
\begin{equation} \label{eq:metric-ph}
    (X \hk \ph_0) \w (Y \hk \ph_0) \w \ph_0 = - 6 \langle X, Y \rangle \vol
\end{equation}
in terms of the standard inner product and volume form. The group $\Gt$ can be \emph{defined} to be
\begin{equation*}
    \Gt = \{ A \in \GL(7, \R) : A^* \ph_0 = \ph_0 \},
\end{equation*} 
where $(A^* \ph_0) (X_1, \dots, X_7) = \ph_0 (A X_1, \dots, A X_7)$. One can show from the above definition that $\Gt \subset \SO(7)$. In fact, $\Gt$ is a compact, connected, simply-connected simple Lie group of dimension $14$, and is one of the five exceptional Lie groups. It is also the \emph{automorphism group} of the \emph{octonions}, an $8$-dimensional non-associative real division algebra. But none of these facts are relevant for us here. (See Bryant~\cite{Bryant-some-remarks} or Karigiannis~\cite{K-intro} for further details.)

For us, then, a $\Gt$-structure on a $7$-manifold $M$ is a smooth $3$-form $\ph$ with the property that, at every point $p \in M$, there exists a basis for $T_p^* M$ in which $\ph_p$ takes the standard form~\eqref{eq:standard-ph}. This basis can be taken to be of the form $dx^1|_p, \ldots, dx^7|_p$ for local coordinates $x^1, \ldots, x^n$ near $p$, but we can only demand agreement with~\eqref{eq:standard-ph} \emph{at $p$}.

The existence of a $\Gt$-structure on $M$ is a purely topological question. It turns out that $M$ admits $\Gt$-structures if and only if $M$ is both \emph{orientable} and \emph{spinnable}, or equivalently if and only if its first two Stiefel--Whitney classes vanish. (See Joyce~\cite{Joyce} or Lawson-Michelsohn~\cite{Lawson-Michelsohn} for details).

It is worth explicitly showing how $\ph$ determines a Riemannian metric and an orientation in a nonlinear way. Let $x^1, \ldots, x^7$ be any local coordinates on $M$. For~\eqref{eq:metric-ph} to hold for $\ph$, we must have
\begin{equation*}
    - 6 B_{ij} dx^1 \w \cdots \w dx^7 := (\partial_i \hk \ph) \w (\partial_j \hk \ph) \w \ph = - 6 g_{ij} \sqrt{\det g} dx^1 \w \cdots \w dx^7,
\end{equation*}
from which we obtain $B_{ij} = g_{ij} \sqrt{\det g}$. Taking determinants gives $\det B = (\det g)^{\frac{9}{2}}$ and thus $\sqrt{\det g} = (\det B)^{\frac{1}{9}}$ and, finally, $g_{ij} = (\det B)^{-\frac{1}{9}} B_{ij}$. Note that $B_{ij}$ is cubic in $\ph$, so $g_{ij}$ is a very nonlinear function of $\ph$.

In fact, the above construction gives a concrete way of understanding when a $3$-form $\ph$ on a $7$-manifold is a $\Gt$-structure. First, in any local coordinate chart the $7 \times 7$ matrix $B_{ij}$ defined above must be invertible, and then $(\det B)^{- \frac{1}{9}} B_{ij}$ must be positive definite. It is clear from this that being a $\Gt$-structure is an \emph{open} condition. That is, if $M$ admits a $\Gt$-structure $\ph$, then it admits \emph{many} $\Gt$-structures, as any $3$-form sufficiently close to $\ph$ will also be a $\Gt$-structure.

Let $\ps = \star \ph$ be the Hodge dual $4$-form with respect to the metric and orientation induced by $\ph$. One can explicitly verify several fundamental \emph{contraction identities} for $\ph$ and $\ps$, expressed here in terms of a local orthonormal frame for $g$. The most frequently needed contractions are:
\begin{equation} \label{eq:contractions}
\begin{aligned}
\ph_{ijk} \ph_{abk} & = g_{ia} g_{jb} - g_{ib} g_{ja} - \ps_{ijab}, \\
\ph_{ijk} \ph_{ajk} & = 6 g_{ia}, \\
\ph_{ijk} \ps_{abck} & = g_{ia} \ph_{jbc} + g_{ib} \ph_{ajc} + g_{ic} \ph_{abj} - g_{ja} \ph_{ibc} - g_{jb} \ph_{aic} - g_{jc} \ph_{abi}, \\
\ph_{ijk} \ps_{abjk} & = - 4 \ph_{iab}, \\ 
\ps_{ijkl} \ps_{abkl} & = 4 g_{ia} g_{jb} - 4 g_{ib} g_{ja} - 2 \ps_{ijab}, \\ 
\ps_{ijkl} \ps_{ajkl} & = 24 g_{ia}. \\
\end{aligned}
\end{equation}
It is worth emphasizing that all these identities follow from the first one, and that the first one essentially encodes the nonassociativity of octonion multiplication, or, equivalently, the failure of the iteration of the cross product in $\R^7$ to satisfy an identity identical to the iteration of the cross product in $\R^3$. Moreover, it is not an exaggeration to say that all nontrivial results about the local differential geometry of $\Gt$-structures can be traced back to the first identity in~\eqref{eq:contractions}.

\begin{remark} \label{rmk:fund-g2-coordinate-free}
The cross product associated to a $\Gt$-structure $\ph$ is the skew-symmetric bilinear map $\times \colon \mathfrak{X} \times \mathfrak{X} \to \mathfrak{X}$ given by $X \times Y = Y \hk X \hk \ph = \ph(X, Y, \cdot)$. The first identity in~\eqref{eq:contractions} can be written invariantly as
\begin{equation*}
    \langle X \times Y, Z \times W \rangle = \langle X \w Y, Z \w W \rangle - \ps(X, Y, Z, W). 
\end{equation*}
\end{remark}

Given a $\Gt$-structure $\ph$ on $M$, the exterior algebra bundle $\Lambda^{\bullet} (T^* M)$ on $M$ decomposes into an orthogonal direct sum of subbundles corresponding to the decomposition of $\Lambda^{\bullet} (\R^7)^*$ into irreducible representations of the Lie algebra $\mathfrak{g}_2$. Detailed discussion of these decompositions can be found in Dwivedi--Gianniotis--Karigiannis~\cite{DGK-flows2}. We describe these orthogonal direct sum bundle decompositions by equivalently describing the orthogonal direct sum decompositions of their associated spaces of smooth sections. We will content ourselves with the following summary given both invariantly and in terms of a local orthonormal frame.

First, the space of $2$-forms decomposes as $\Omega^2 = \Omega^2_7 \oplus \Omega^2_{14}$ where
\begin{align*}
    \Omega^2_7 & = \{ \beta \in \Omega^2 : \beta = X \hk \ph, X \in \mathfrak{X} \} & \Omega^2_{14} & = \{ \beta \in \Omega^2 : \beta \w \ps = 0 \} \\
    & = \{ \beta \in \Omega^2 : \star (\ph \w \beta) = - 2 \beta \} & & = \{ \beta \in \Omega^2 : \star (\ph \w \beta) = \beta \} \\
    & = \{ \beta \in \Omega^2 : \beta_{ij} \ps_{ijab} = - 4 \beta_{ab} \} & & = \{ \beta \in \Omega^2 : \beta_{ij} \ps_{ijab} = 2 \beta_{ab} \}.
\end{align*}
Note that $\Omega^2_7$ is isomorphic to $\mathfrak{X}$. It follows that we have an orthogonal decomposition
\begin{equation*}
    \cT^2 = \cS^2 \oplus \Omega^2 = \langle g \rangle \oplus \cS^2_0 \oplus \Omega^2_7 \oplus \Omega^2_{14}.
\end{equation*}

As in~\eqref{eq:diamond-defn}, consider the linear map $\cT^2 \to \Omega^3$ given by $A \mapsto A \diamond \ph$. In terms of a local orthonormal frame, this is
\begin{equation*}
    (A \diamond \ph)_{ijk} = A_{im} \ph_{mjk} + A_{jm} \ph_{imk} + A_{km} \ph_{ijm}.
\end{equation*}
One can show that the kernel of this map is $\Omega^2_{14}$. This is just an explicit demonstration that at any point $p \in M$, the space $\Lambda^2_{14} (T^*_p M)$ is identified with the Lie subalgebra $\mathfrak{g}_2$ under the identification of of $\Lambda^2 (T^*_p M)$ with $\mathfrak{so}(7)$ using the metric $g_p$. Moreover, on the complement $\langle g \rangle \oplus \cS^2_0 \oplus \Omega^2_7$, the map $A \mapsto A \diamond \ph$ is a linear isomorphism onto $\Omega^3$.

It follows that the space of $3$-forms decomposes as $\Omega^3 = \Omega^3_1 \oplus \Omega^3_{27} \oplus \Omega^3_7$ where
\begin{align*}
    \Omega^3_1 & = \{ A \diamond \ph : A \in \langle g \rangle \} & \Omega^3_{27} & = \{ A \diamond \ph : A \in \cS^2_0 \} & \Omega^3_7 & = \{ A \diamond \ph : A \in \Omega^2_7 \} \\
    & = \{ f \ph : f \in \Omega^0 \} & & = \{ \gamma \in \Omega^3 : \gamma_{ijk} \ph_{ijk} = 0, \gamma_{ijk} \ps_{ijk\l} = 0 \} & & = \{ (X \hk \ph) \diamond \ph : X \in \mathfrak{X} \}.
\end{align*}
Note that $\Omega^3_1$ is isomorphic to $\Omega^0$ and that $\Omega^3_7$ is isomorphic to $\mathfrak{X}$. In particular, any $3$-form corresponds uniquely to a pair $(h, X)\in \cS^2 \oplus \mathfrak{X}$.

One can also show using the contraction identities that $\Omega^3_7 = \{ Y \hk \ps : X \in \mathfrak{X} \}$, and in fact
\begin{equation} \label{eq:Lambda37}
    Y \hk \ps = - \frac{1}{3} (Y \hk \ph) \diamond \ph.
\end{equation}

\section{The torsion of a $\Gt$-structure} \label{sec:g2torsion}

Let $\ph$ be a $\Gt$-structure on $M$, which induces a metric $g$ and thus a Levi-Civita covariant derivative $\nabla$. For any vector field $X$, the covariant derivative $\nabla_X \ph$ is a $3$-form, so it must be of the form $- \frac{1}{3} \wh{T}(X) \diamond \ph$ for some linear map $\wh{T} \colon \mathfrak{X} \to \langle g \rangle \oplus \cS^2_0 \oplus \Omega^2_7$. (The factor of $- \frac{1}{3}$ is for convenience below). In fact, we claim that $\wh{T}(X) \in \Omega^2_7$ for all $X \in \mathfrak{X}$. To see this, we differentiate the second identity in~\eqref{eq:contractions} to obtain
\begin{equation*}
    \nabla_{\l} \ph_{ijk} \ph_{ijm} = - \ph_{ijk} \nabla_{\l} \ph_{ijm}.
\end{equation*}
Computing the inner product $\langle \nabla_{\l} \ph, A \diamond \ph \rangle$, and using the above, we have
\begin{align*}
    \langle \nabla_{\l} \ph, A \diamond \ph \rangle & = \nabla_{\l} \ph_{ijk} (A \diamond \ph)_{ijk} \\
    & = \nabla_{\l} \ph_{ijk} (A_{im} \ph_{mjk} + A_{jm} \ph_{imk} + A_{km} \ph_{ijm}) \\
    & = 3 \nabla_{\l} \ph_{ijk} A_{km} \ph_{ijm} \\
    & = - 3 \nabla_{\l} \ph_{ijm} A_{km} \ph_{ijk} \\
    & = - \langle \nabla_{\l} \ph, A^T \diamond \ph \rangle,
\end{align*}
and thus $\nabla_i \ph$ is orthogonal to $A \diamond \ph$ if $A$ is symmetric, as claimed.

\begin{definition}
The tensor $\wh{T} \in \Gamma(T^* M \otimes \Lambda^2_7 (T^* M)$ given by $- \frac{1}{3} \wh{T}(X) \diamond \ph = \nabla_X \ph$ is called the \emph{torsion} of the $\Gt$-structure. If $\wh{T} = 0$, then $\ph$ is called \emph{torsion-free}, and it follows from the standard theory of connections that the holonomy of the Riemannian metric $g$ of $\ph$ is contained in the subgroup $\Gt$ of $\SO(7)$. In this case, we call the pair $(M, \ph)$ a \emph{torsion-free} $\Gt$-manifold. (In much of the older literature, these are simply called $\Gt$-manifolds, but it is actually preferable to use the term $\Gt$-manifold for a manifold equipped with a $\Gt$-structure, which in general may have torsion).
\end{definition}

Note that due to the isomorphism $\Omega^2_7 \cong \Omega^1$, we can equivalently express the torsion $\wh{T}$ as a $2$-tensor $T \in \cT^2$ by $T(X) \hk \ps = \nabla_X \ph = - \frac{1}{3} (\wh{T} (X)) \diamond \ph$. That is, putting $Y = T(X)$ in~\eqref{eq:Lambda37}, we have
\begin{equation*}
    T(X) \hk \ph = \wh{T}(X) \in \Omega^2_7 \quad \text{for all $X \in \mathfrak{X}$}.
\end{equation*}
The following are results and remarks about the torsion of a $\Gt$-structure $\ph$, some of which are classical. Detailed proofs and some historical discussions can be found in Karigiannis~\cite{K-flows1} and Dwivedi--Gianniotis--Karigiannis~\cite{DGK-flows2}.
\begin{itemize}
\item The $\Gt$-structure $\ph$ is torsion-free if and only if $d \ph = 0$ and $d \ps = 0$. Note that the first equation is linear, but the second equation is nonlinear because $\ps = \star_{\ph} \ph$ depends nonlinearly on $\ph$. The essential reason for this fact is the following. First, the two conditions $d \ph = 0$ and $d \ps = 0$ have some overlap. The condition $d\ph = 0$ is an equation in $\Omega^3 = \Omega^3_1 \oplus \Omega^3_{27} \oplus \Omega^3_7$, so it has components $1 + 27 + 7$. The condition $* d \ps = 0$ is an equation in $\Omega^2 = \Omega^2_7 \oplus \Omega^2_{14}$, so it has components in $7 + 14$. The two $7$ components are multiples of each other, and up to constants the $1+27+7+14$ components of $d\ph, \star d \ps$ are the $1+27+7+14$ components of the torsion $T$. In particular, the torsion $T$ of a $\Gt$-structure has four independent components, so there are $2^4 = 16$ ``classes'' of $\Gt$-structures. 

\item If $d \ph = 0$ only, then we say $\ph$ is a \emph{closed} $\Gt$-structure. If $d \ps = 0$ only, then we say $\ph$ is a \emph{coclosed} $\Gt$-structure.

\item There exists a very important relation between the covariant derivative $\nabla T$ of the torsion of $\ph$ and the Riemann curvature of the induced metric, given in terms of a local orthonormal frame by
\begin{equation} \label{eq:g2bianchi}
    \nabla_i T_{jk} - \nabla_j T_{ik} = T_{ip} T_{jq} \ph_{pqk} + \frac{1}{2} R_{ijpq} \ph_{pqk}
\end{equation}
This is called the ``$\Gt$-Bianchi identity'', since it can be proved in a similar way to the classical Riemannian Bianchi identities by considering the infinitesimal characterization of diffeomorphism invariance of the torsion. That is, by differentiating $T_{F_t^* \ph} = F_t^* T_{\ph}$ for $F_t$ the flow of a vector field. It can also be proved by directly expanding the Ricci identity $\nabla^2_{X, Y} \ph - \nabla^2_{Y, X} \ph = \tRm(X, Y) \ph$ using $\nabla_X \ph = T(X) \hk \ps$. The $\Gt$-Bianchi identity~\eqref{eq:g2bianchi} is fundamental for understanding the independent second order differential invariants of a $\Gt$-structure, as we discuss in Section~\ref{sec:flows2}.

\item One can show from~\eqref{eq:g2bianchi} that the \emph{Ricci curvature} can be expressed completely in terms of the torsion $T$ and its covariant derivative $\nabla T$, in a nonlinear way. Explicitly, we have
\begin{equation} \label{eq:Ricci-ph}
\begin{aligned}
    R_{ij} & = - \tfrac{1}{2} (\nabla_p T_{iq} \ph_{pjq} + \nabla_p T_{jq} \ph_{piq}) - \tfrac{1}{2} (\nabla_i T_{pq} \ph_{jpq} + \nabla_j T_{pq} \ph_{ipq}) \\
    & \qquad {} - \tfrac{1}{2} (T_{im} T_{pq} \ps_{pqmj} + T_{jm} T_{pq} \ps_{pqmi}) + \tfrac{1}{2} (\tr T) (T_{ij} + T_{ji}) - \tfrac{1}{2} (T_{im} T_{mj} + T_{jm} T_{mi}).
\end{aligned}
\end{equation}
It follows that the metric of a torsion-free $\Gt$-structure is \emph{Ricci-flat}. Another interesting class occurs when $T_{ij} = \lambda g_{ij}$ for some constant $\lambda \neq 0$. In this case, $d \ph = 4 \lambda \ps$ and $d \ps = 0$. These are called \emph{nearly parallel} $\Gt$-structures, and their Ricci tensors satisfy $\tRc = 6 \lambda^2 g$, so the metric is \emph{positive Einstein}.
\end{itemize}

\section{The $\Gt$ Laplacian flow and its motivation} \label{sec:Lap-flow}

Now that we know a bit about $\Gt$-structures, it is natural to ask if we can start with some $\Gt$-structure $\ph_0$, and \emph{flow} it to a ``better'' $\Gt$-structure. Ideally, perhaps we would like to flow to a torsion-free $\Gt$-structure, but let's keep the discussion deliberately vague.

The most well-studied and successful flow of $\Gt$-structures so far is the $\Gt$ Laplacian flow. Before we define this, let us consider some motivation, which are two important foundational theorems by Joyce and Hitchin. We state them here somewhat imprecisely.

\begin{theorem}[Joyce~\cite{Joyce}] \label{thm:Joyce}
Let $\ph_0$ be a \emph{closed} $\Gt$-structure on a \emph{compact} $M$. Suppose that $\ph$ is ``almost'' torsion-free in a very precise sense. (There should exist a closed $4$-form $\xi$ that is sufficiently close to $\star_{\ph_0} \ph_0$ in the $C^0$, $L^2$, and $L^{14}_1$ norms.) Then there exists a torsion-free $\Gt$-structure $\ph$ closed to $\ph_0$ and in the same cohomology class $[\ph_0] \in H^3 (M, \R)$.
\end{theorem}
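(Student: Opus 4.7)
\textbf{Proof proposal for Theorem~\ref{thm:Joyce}.}

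The plan is to find the desired torsion-free $\Gt$-structure in the form $\ph = \ph_0 + d \eta$ for some $2$-form $\eta \in \Omega^2$. Writing $\ph$ this way automatically guarantees that $d \ph = 0$ and that $[\ph] = [\ph_0]$ in $H^3(M, \R)$. Moreover, since being a $\Gt$-structure is an open condition, as long as $\|d\eta\|_{C^0}$ is small enough, $\ph$ will indeed be a $\Gt$-structure. Thus the only remaining condition to impose is that $d \star_{\ph} \ph = 0$, which is a nonlinear PDE for $\eta$ because $\star_\ph$ depends nonlinearly on $\ph$.

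The key step is to expand this nonlinear condition. Writing $\ps = \star_{\ph_0} \ph_0$ and using the pointwise decomposition $\Omega^3 = \Omega^3_1 \oplus \Omega^3_7 \oplus \Omega^3_{27}$ with respect to $\ph_0$, one can compute that
\begin{equation*}
    \star_{\ph_0 + d\eta} (\ph_0 + d \eta) - \ps = \tfrac{4}{3} \star_{\ph_0} \pi_1 (d \eta) + \star_{\ph_0} \pi_7 (d \eta) - \star_{\ph_0} \pi_{27} (d \eta) + Q(d\eta),
\end{equation*}
where $Q$ is a smooth nonlinear function of $d\eta$ vanishing to second order at $0$. Imposing $d \star_\ph \ph = 0$ and rearranging yields a schematic equation of the form
\begin{equation*}
    L \eta = - d \ps - d Q(d\eta),
\end{equation*}
where $L$ is a second-order linear operator in $\eta$ built from $d$, $\star_{\ph_0}$, and the projections $\pi_k$. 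Because $L$ has the same principal symbol (up to a constant scale on each summand) as the Hodge Laplacian $\Delta_d$ on closed/coclosed parts, it is elliptic modulo the kernel of $d$, and Hodge theory provides an inverse on the appropriate orthogonal complement.

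The remainder of the argument is an iteration scheme: set $\eta_0 = 0$ and inductively define $\eta_{j+1}$ by solving the linear equation $L(\eta_{j+1} - \eta_j) = -d \star_{\ph_j} \ph_j$, where $\ph_j = \ph_0 + d \eta_j$. Joyce's quantitative smallness hypotheses on $\ph_0$ in $C^0$, $L^2$, and $L^{14}_1$ norms (the latter being natural in dimension $7$ since $L^{14}_1 \hookrightarrow C^0$) are exactly what is needed to ensure: (i) each $\ph_j$ remains a $\Gt$-structure, (ii) the nonlinear error $d Q(d\eta_j)$ is controlled quadratically in terms of the previous torsion, and (iii) the sequence $\{\eta_j\}$ converges in a suitable Banach space to a limit $\eta$ with $d\star_\ph \ph = 0$. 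Finally elliptic regularity bootstraps the limit to be smooth. The hard part, and the most delicate portion of the proof, is precisely establishing the a priori Sobolev estimates that make the iteration Cauchy: one must carefully exploit the quadratic nature of $Q$ against the linear improvement provided by $L^{-1}$, and keep the size of $d\eta_j$ small enough at every step so that $\ph_j$ does not leave the open locus of $\Gt$-structures. These delicate estimates are the core technical content of Joyce's existence theorem.
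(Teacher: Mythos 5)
The first thing to say is that these notes do not prove Theorem~\ref{thm:Joyce} at all: it is stated (deliberately imprecisely) as a quoted result of Joyce, with the reference~\cite{Joyce} standing in for the proof. So the only meaningful comparison is with Joyce's own argument. Your outline does track that argument at the level of strategy: the ansatz $\ph = \ph_0 + d\eta$, the expansion of $\ph \mapsto \star_\ph \ph$ with the coefficients $\tfrac43, 1, -1$ on the $\Omega^3_1, \Omega^3_7, \Omega^3_{27}$ components, and an iteration controlled in the $C^0$, $L^2$, and $L^{14}_1$ norms are all genuinely the ingredients of Joyce's proof.

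However, as a proof your proposal has a real gap exactly at the decisive step. You assert that the linear operator $L\eta = d\,\star_{\ph_0}\!\big(\tfrac43\pi_1 + \pi_7 - \pi_{27}\big)d\eta$ ``has the same principal symbol as $\Delta_d$ up to a constant scale on each summand'' and is therefore invertible by Hodge theory. This is not right as stated: because of the minus sign on the $\Omega^3_{27}$ summand, the associated quadratic form $\int_M \big\langle \big(\tfrac43\pi_1 + \pi_7 - \pi_{27}\big)d\eta,\, d\eta \big\rangle \, \mathrm{vol}$ is indefinite, and in addition $L$ annihilates every closed $2$-form, so invertibility on any natural complement is not something the Hodge theorem hands you; positivity of the symbol fails in the $27$ direction, which is precisely where the subtlety of the theorem lies. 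Joyce does not invert this operator. Instead, exploiting that $d\eta$ is exact, that $\ph_0$ is closed, and that the hypothesis provides a closed $4$-form $\xi$ so that $\psi := \star_{\ph_0}(\star_{\ph_0}\ph_0 - \xi)$ is a small $3$-form with $d\star_{\ph_0}\ph_0 = d\star_{\ph_0}\psi$, he reformulates the torsion-free condition as a fixed-point problem for the genuine Hodge Laplacian, schematically $\Delta_d \eta = d^*\psi + d^* F(d\eta)$ with $F$ quadratic, together with a separate argument that solving this reformulated equation really does force $d\star_\ph\ph = 0$. Relatedly, the smallness hypothesis is on the torsion of $\ph_0$ (through $\xi$), not on $\ph_0$ itself, and the inductive estimates on $\|d\eta_j\|_{L^2}$, $\|d\eta_j\|_{C^0}$, $\|\nabla d\eta_j\|_{L^{14}}$ that make the iteration converge and keep each $\ph_j$ inside the open cone of positive $3$-forms --- which you explicitly defer --- are not routine bookkeeping but the actual content of the theorem. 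So what you have is a correct roadmap of Joyce's approach; the two points you wave at (solvability of the linearized equation and the quantitative estimates) are exactly where the proof lives.
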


\begin{remark} \label{rmk:Joyce}
In fact Theorem~\ref{thm:Joyce} is the only existence theorem known for compact torsion-free $\Gt$-manifolds, and all known such constructions use this result in various ways.
\end{remark}

\begin{theorem}[Hitchin~\cite{Hitchin}] \label{thm:Hitchin}
Let $\ph_0$ be a \emph{closed} $\Gt$-structure on a \emph{compact} $M$. Consider the space $\cC$ of $\Gt$-structures in the cohomology class $[\ph_0] \in H^3 (M, \R)$, and define a functional $\cF \colon \cC \to \R$ by $\cF(\ph) = \int_M \vol_{\ph} = \mathrm{Vol}(M, \ph)$, the volume of $M$ with respect to the metric induced by $\ph$. Then the critical points of $\cF$ are precisely the torsion-free $\Gt$-structures in the class $[\ph_0]$, and the functional has a local maximum at such critical points.
\end{theorem}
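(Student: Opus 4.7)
The plan is to exploit the affine structure on $\cC$: tangent vectors at $\ph \in \cC$ are exact $3$-forms $d\eta$ with $\eta \in \Omega^2$. The computation of the variation rests on Hitchin's pointwise identity
\begin{equation*}
    \delta \vol_\ph = \tfrac{1}{3}\, (\delta \ph) \wedge \ps_\ph,
\end{equation*}
which I would establish by combining two observations. First, from $g_{ij} = (\det B)^{-1/9} B_{ij}$ with $B$ cubic in $\ph$, the map $\ph \mapsto \vol_\ph$ is homogeneous of degree $7/3$, so testing both sides against the radial direction $\delta\ph = \epsilon\ph$ together with the normalization $\ph \wedge \ps = 7\vol_\ph$ fixes the constant. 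Second, the remaining check that both sides agree on the $\Omega^3_7$ and $\Omega^3_{27}$ summands of $\Omega^3$ is a direct pointwise computation using the contraction identities~\eqref{eq:contractions}.

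With this identity in hand, the first variation along $\delta\ph = d\eta$ reduces via Stokes' theorem on the closed manifold $M$ to
\begin{equation*}
    (\delta \cF)_\ph (d\eta) = \tfrac{1}{3} \int_M d\eta \wedge \ps_\ph = -\tfrac{1}{3} \int_M \eta \wedge d\ps_\ph,
\end{equation*}
which vanishes for every $\eta \in \Omega^2$ if and only if $d\ps_\ph = 0$. Combined with the standing condition $d\ph = 0$ on $\cC$, this is exactly the torsion-free condition, completing the first half of the theorem.

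For the local maximum claim, I would compute the Hessian by differentiating $\ps_\ph$ as a function of $\ph$. Decomposing $\delta \ph$ along the pointwise splitting $\Omega^3 = \Omega^3_1 \oplus \Omega^3_{27} \oplus \Omega^3_7$, the standard formula obtained from $\delta(\star_\ph \ph)$ expresses $\delta \ps_\ph$ as a linear function of $\delta \ph$ with distinct coefficients on each summand (in particular, a factor of $\tfrac{4}{3}$ on the scalar part, and opposite signs on the $\Omega^4_{27}$ and $\Omega^4_7$ parts). Substituting into $\tfrac{1}{3}\int_M (\delta\ph) \wedge (\delta \ps)$ and expanding along the type decomposition of $d\eta$ yields a symmetric bilinear form on the space of exact $3$-forms.

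The main obstacle is to show that this Hessian is negative semi-definite with the correct kernel. Because $\cF$ is diffeomorphism invariant and $d\ph = 0$, Cartan's formula gives $\cL_X \ph = d(X \hk \ph)$, so any $\eta$ of the form $X \hk \ph \in \Omega^2_7$ produces a gauge direction $d\eta$ on which the Hessian vanishes; the difficulty is verifying strict negativity in directions transverse to this gauge. I would reduce this to a pointwise algebraic problem by representing each exact $3$-form by its Hodge-theoretic minimizer (so that the chosen $\eta$ is $d^*$-closed), then confirm that the scalar and $\Omega^3_{27}$ components of $d\eta$ contribute pointwise negatively, while the $\Omega^3_7$ component vanishes once restricted to the transverse complement. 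Getting the signs right across all three type components, and matching the kernel exactly with the infinitesimal diffeomorphism orbit, is the delicate algebraic core of the proof.
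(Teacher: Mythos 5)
The paper does not actually prove Theorem~\ref{thm:Hitchin}; it is quoted from Hitchin~\cite{Hitchin} without proof, so there is no internal argument to compare against. Your first-variation half is correct and is the standard route: the pointwise identity $\delta\vol_\ph = \tfrac13(\delta\ph)\wedge\ps_\ph$ (fixed on the radial direction by homogeneity of degree $7/3$ together with $\ph\wedge\ps=7\vol_\ph$, and checked to vanish on the $\Omega^3_7\oplus\Omega^3_{27}$ summands), followed by Stokes, gives criticality within $[\ph_0]$ if and only if $d\ps_\ph=0$, which with $d\ph=0$ is exactly torsion-freeness.

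The gap is in the local-maximum half, and it starts with a sign error that undermines your strategy. With $\delta\ps = \tfrac43\star\gamma_1 + \star\gamma_7 - \star\gamma_{27}$ for $\gamma = d\eta = \gamma_1+\gamma_7+\gamma_{27}$, the Hessian at a torsion-free point is $\tfrac13\int_M\bigl(\tfrac43|\gamma_1|^2 + |\gamma_7|^2 - |\gamma_{27}|^2\bigr)\vol$: the scalar and $\Omega^3_7$ parts contribute \emph{positively} pointwise and only the $\Omega^3_{27}$ part negatively, contrary to your claim that the scalar and $27$ components are pointwise negative while the $7$ component vanishes transversally. Because the integrand has indefinite pointwise signature $(1+7,27)$, no pointwise algebraic argument can establish negative semidefiniteness; the content of the theorem is a genuinely global estimate. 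Moreover, taking $\eta$ coexact is not the right gauge fixing: since $d\ph=0$, the gauge directions are $\cL_X\ph = d(X\hk\ph)$, i.e., they arise from $\eta\in\Omega^2_7$, so the natural transversal is $\eta\in\Omega^2_{14}$. In that gauge $\eta\wedge\ps=0$ and $d\ps=0$ give $\pi_1(d\eta)=0$ pointwise, which does kill the scalar term, but the positive term $|\pi_7(d\eta)|^2$ survives (up to a constant it is $|d^*\eta|^2$, since $\ph\wedge\eta=\star\eta$ for $\eta\in\Omega^2_{14}$), and it must be dominated in $L^2$ by the negative $|\pi_{27}(d\eta)|^2$ term. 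That comparison requires Hodge theory on the compact torsion-free (Ricci-flat, reduced holonomy) manifold, e.g.\ the compatibility of $\Delta_d$ with the type decomposition and integration by parts, and it is precisely what produces the Morse--Bott statement recorded in Remark~\ref{rmk:Hitchin}. Your sketch does not supply this global estimate, so the local-maximum claim remains unproved as written.
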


\begin{remark} \label{rmk:Hitchin}    
In fact, Hitchin showed that the functional $\cF$ is a \emph{Morse--Bott} functional, in the sense that its critical points are nondegenerate in directions transverse to the action of the diffeomorphism group. It is also worth pointing out that there may not exist \emph{any} critical points in $\cC$, and that it is still unknown whether a given cohomology class can admit more than one torsion-free $\Gt$-structure. Theorem~\ref{thm:Hitchin} only gives \emph{local uniqueness} of torsion-free $\Gt$-structures in a given cohomology class.
\end{remark}

The above two theorems provide strong motivation to consider the following situation. If we \emph{start} with a closed $\Gt$-structure $\ph_0$, we may be able to define a flow $\ph(t)$ within its cohomology class $[\ph_0] \in H^3 (M, \R)$ in such a way that the volume $\mathrm{Vol}(M, \ph(t))$ is increasing, hopefully to a local max and thus a torsion-free $\Gt$-structure. In fact, this is indeed a great idea. It was first considered by Bryant~\cite{Bryant-some-remarks} before either of the two above theorems appeared. (Although it was not realized that Bryant's flow was the gradient flow of $\cF$ until later.)

More precisely, define a flow of $\Gt$-structures to be the (positive) gradient flow of the Hitchin volume functional $\cF$, thought of as a function on the space of exact $3$-forms, by $\cF(d \eta) = \mathrm{Vol}(M, \ph_0 + d \eta)$. One can show that this flow is
\begin{equation} \label{eq:Lap-flow}
    \partial_t \ph = \Delta_d \ph, \qquad \text{$\ph(0) = \ph_0$, with $d \ph_0 = 0$},
\end{equation}
where $\Delta_d = d d^* + d^* d$ is the Hodge Laplacian with respect to the metric $g(t)$ of $\ph(t)$, so this flow is indeed a \emph{nonlinear} but quasilinear second order flow. The flow~\eqref{eq:Lap-flow} is called the $\Gt$ Laplacian flow, although perhaps it should be called the Bryant--Hitchin flow. 

Note that by examining equations~\eqref{eq:BW} and~\eqref{eq:AL}, it would \emph{seem} at first glance that the flow~\eqref{eq:Lap-flow} has the wrong sign to exhibit heat-like (diffusive) behaviour. However, something somewhat mysterious happens due to the fact that the Hodge star $\star_{\ph}$ depends nonlinearly on $\ph$, which ends up making the equation behave well, in the following sense.
\begin{theorem}[Bryant--Xu~\cite{Bryant-Xu}] \label{thm:Bryant-Xu}
The $\Gt$ Laplacian flow~\eqref{eq:Lap-flow} starting from a closed $\Gt$-structure $\ph_0$ has short time existence and uniqueness. Moreover under this flow, the $\Gt$-structure $\ph(t)$ stays in the cohomology class $[\ph_0]$ and in particular remains closed.
\end{theorem}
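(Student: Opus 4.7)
The proof splits naturally into two parts: (i) preservation of closedness and of the cohomology class along any smooth solution, which is essentially algebraic, and (ii) short time existence and uniqueness, which requires a DeTurck-style gauge-fixing argument in the spirit of Section~\ref{sec:DeTurck}. For (i), whenever $\ph$ is closed one has $\Delta_d \ph = dd^* \ph + d^*d \ph = d(d^* \ph)$, which is exact; hence $\partial_t \ph$ is exact and $\ph(t) - \ph_0$ stays in $d \Omega^2$, preserving $[\ph_0]$. Rather than verify closedness post hoc (which would require a uniqueness argument for a backward-heat-type equation in $d\ph$), I would work in the ansatz $\ph(t) = \ph_0 + d\eta(t)$ with $\eta(0) = 0$, so that closedness and cohomology preservation are built in by construction.

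For (ii), I would first linearize $\ph \mapsto \Delta_d \ph$ at a closed $\ph$ and compute the second-order principal symbol $B(\xi) := \sigma_2(D_\ph \Delta_d)(\xi)$ acting on $\Lambda^3 T^*_p M$. Because $\star = \star_\ph$ and $d^* = \pm \star d \star$ depend algebraically on $g(\ph)$, and hence on $\ph$, the variation of $\star$ with respect to $\ph$ contributes an algebraic (zero-order-in-$\dot\ph$) piece $\dot\psi := D_\ph(\star \ph)(\dot\ph)$ which enters the linearization through the leading term $-d \star d\, \dot\psi$ and decisively modifies the naive principal symbol $\sigma_2(\Delta_d)(\xi) = -|\xi|^2 I$ (which by itself would produce a backward heat equation). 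The diffeomorphism invariance $\Delta_d(F^* \ph) = F^*(\Delta_d \ph)$ linearizes to $B(\xi) A(\xi) X = 0$, where $A(\xi) X := \sigma_1(\mathcal{L}_\cdot \ph)(\xi) X = \xi \wedge (X \hk \ph)$ is the symbol of the infinitesimal Lie action (using $d\ph = 0$). Using the $\mathfrak{g}_2$-irreducible decomposition $\Omega^3 = \Omega^3_1 \oplus \Omega^3_{27} \oplus \Omega^3_7$ together with the contraction identities~\eqref{eq:contractions}, I would verify that $\ker B(\xi) = \im A(\xi)$ exactly, and that the self-adjoint part of $B(\xi)$ is uniformly positive definite on $(\im A(\xi))^\perp$, in close parallel with Propositions~\ref{prop:image-divstar}--\ref{prop:symb-perp} for Ricci flow.

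Having isolated the gauge direction, I would introduce a DeTurck vector field $V(\ph)$, in the spirit of $\W(g)$ from~\eqref{eq:DeTurck-W} but built from a first-order contraction of $\ph$ against a fixed reference $\Gt$-structure (for instance the initial datum $\ph_0$), such that the modified flow
\begin{equation*}
\partial_t \ph = \Delta_d \ph + \mathcal{L}_{V(\ph)} \ph
\end{equation*}
acquires positive definite self-adjoint principal symbol on all of $\Omega^3$ and is therefore strongly parabolic in the sense of Definition~\ref{defn:QL-parabolic}. Theorem~\ref{thm:parabolic-STE} then yields STE and uniqueness for the modified flow, and pulling back by the flow of $-V(\ph(t))$ on $M$, exactly as in~\eqref{eq:DeTurck-equivalence}, produces a solution of the original Laplacian flow; uniqueness is transported back along the same correspondence via a harmonic-map-type argument as in~\eqref{eq:RF-HMHF}. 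The main obstacle is the symbol computation: one must show that the algebraic contributions from $\dot\psi$ combine with the naive $-|\xi|^2 I$ to yield kernel \emph{exactly} $\im A(\xi)$ and \emph{positive} (rather than negative) definiteness on the complement. Without the nonlinearity contributed by $\star = \star_\ph$ the Laplacian flow would read as a backward heat equation; it is precisely the $\mathfrak{g}_2$-geometry encoded in~\eqref{eq:contractions} that renders the flow forward parabolic after gauge-fixing.
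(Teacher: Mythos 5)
Your part (i) — building closedness and the cohomology class into the unknown via the ansatz $\ph(t)=\ph_0+d\eta(t)$ rather than trying to prove preservation a posteriori — is exactly the formulation that Bryant--Xu actually use, namely~\eqref{eq:BX-actual}, and your parenthetical remark about why the post hoc route is problematic matches the discussion in the text. The genuine gap is in part (ii). Your key claim is that, at a closed background, $\ker\sigma_2(D_\ph\Delta_d)(\xi)=\im A(\xi)$ with $A(\xi)X=\xi\w(X\hk\ph)$, and that a DeTurck vector field $V(\ph)$ then makes $\partial_t\ph=\Delta_d\ph+\cL_{V(\ph)}\ph$ strongly parabolic on \emph{all} of $\Lambda^3(T_p^*M)$, so that Theorem~\ref{thm:parabolic-STE} applies. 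This is false: the degeneracy of the symbol of the Laplacian flow is strictly larger than the $7$-dimensional gauge subspace $\im A(\xi)$. Even after the DeTurck modification, positivity of (the self-adjoint part of) the symbol holds only on the directions tangent to the set of closed forms — at the symbol level, the $15$-dimensional subspace $\xi\w\Lambda^2(T_p^*M)$ — and not on a complement of $\im A(\xi)$ inside all of $\Lambda^3$; this is precisely the sense in which the flow is ``parabolic only in the direction of closed forms.'' If your claim were correct, the standard DeTurck argument would yield short time existence \emph{and uniqueness among all solutions of~\eqref{eq:Lap-flow}} with closed initial data, with preservation of closedness then provable separately; but the text is explicit that exactly this is not known (see the discussion preceding Question~\ref{quest:Lap}, and the remark in Section~\ref{sec:flows2} that the Laplacian flow does not satisfy the sufficient conditions~\eqref{eq:DGK-inequalities} for a DeTurck-type trick). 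Your correct intuition that the variation of $\star_\ph$ rescues the naive wrong-signed symbol only goes as far as the closed directions; it does not produce full strong parabolicity modulo diffeomorphisms.

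Consequently the final step of your argument cannot be completed as written: once you (correctly) work within the ansatz $\ph=\ph_0+d\eta$, the evolution is an equation for the potential, the space of exact $3$-forms is not the space of sections of a vector bundle, and the operator $\eta\mapsto dd^{\star_\ph}(\ph_0+d\eta)$ is degenerate because of the large kernel of $d$ on $\Omega^2$; Theorem~\ref{thm:parabolic-STE} therefore cannot be quoted, even after adding $\cL_{V(\ph)}\ph$. This is why the Bryant--Xu proof is described in the text as ``somewhat non-standard'': it combines DeTurck-type gauge fixing with machinery (Hamilton's Nash--Moser framework for flows whose failure of parabolicity comes from an integrable degeneracy) designed precisely to handle parabolicity only in the closed directions, rather than a direct application of standard quasilinear parabolic existence theory followed by pull-back along diffeomorphisms as in~\eqref{eq:DeTurck-equivalence} and~\eqref{eq:RF-HMHF}. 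The uniqueness one obtains this way is likewise uniqueness within the class of solutions of the form~\eqref{eq:BX-actual}, which is the careful reading of the theorem that the text emphasizes.
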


However, as stated, Theorem~\ref{thm:Bryant-Xu} is somewhat misleading. What Bryant--Xu \emph{actually proved} is the following. Let $\ph_0$ be a closed $\Gt$-structure, and let $\ph_t = \ph_0 + d \sigma_t$ for some family of $2$-forms $\sigma_t$ on $M$. Consider the flow
\begin{equation} \label{eq:BX-actual}
    \partial_t \ph_t = \partial_t (d \sigma_t) = d d^{\star_{\ph_t}} (\ph_0 + d \sigma_t), \qquad \sigma_0 = 0. 
\end{equation}
Bryant--Xu proved that the flow~\eqref{eq:BX-actual} has short time existence and uniqueness. This flow clearly implies that $\ph_t = \ph_0 + d \sigma_t$ evolves by the $\Gt$ Laplacian flow $\partial_t \ph_t = \Delta_t \ph_t$ and that $d \ph_t = 0$ for all $t$ such that the flow exists.

However (although it is probably unlikely), we \emph{do not know for sure} that the $\Gt$-Laplacian flow~\eqref{eq:Lap-flow} with initial condition $\ph_0$ which is closed does not have any other solutions which do not stay in the cohomology class $[\ph_0]$. In other words, the formulation~\eqref{eq:BX-actual} actually by construction forces the flow to preserved the closed condition, and in this setting the solution exists (for short time) and is unique. Moreover, by forcing the ``closedness'' to be preserved in this way, we actually have a flow of \emph{exact $3$-forms}, and their argument for establishing STE is somewhat non-standard.

To get a better sense of how this situation differs from similar results in other contexts, we note that \emph{usually}, one proves that a certain property is \emph{preserved} along a geometric flow by using the \emph{maximum principle}. For example:
\begin{itemize}
    \item That positivity of scalar curvature is preserved under the Ricci flow is proved using the maximum principle. Note that STE and uniqueness for Ricci flow is known without any a priori conditions on the initial metric. See Chow--Knopf~\cite{Chow-Knopf} for details.
    \item In a K\"ahler--Einstein manifold, the condition of a submanifold being \emph{Lagrangian} is preserved under the mean curvature flow, and this is proved using the maximum principle. Note that STE and uniqueness for the mean curvature flow is known without any a priori conditions on the initial submanifold. See Smoczyk~\cite{Smoczyk} for details.
    \item If the initial metric $g_0$ is a K\"ahler metric, then under the Ricci flow the metric \emph{stays} K\"ahler for all time for which the flow exists. There exists a direct proof of this using the maximum principle. There is also a proof similar in spirit to the Bryant--Xu proof of Theorem~\ref{thm:Bryant-Xu}. More precisely, let $g_0$ be a K\"ahler metric on the complex manifold $(M^{2m}, J)$ with associated K\"ahler form $\omega_0$. Let $\omega_t = \omega_0 + \sqrt{-1} \partial \bar\partial f_t$ for some family of functions $f_t$ on $M$. Consider the flow
\begin{equation} \label{eq:Cao-KRF}
    \partial_t \omega_t = \partial_t \sqrt{-1} \partial \bar \partial f_t = - 2 \rho_{\omega_t},
\end{equation}
    where $\rho_{\omega_t}$ is the Ricci form of the K\"ahler metric $g_t (\cdot, \cdot) = \omega_t( \cdot, J \cdot)$. Cao~\cite{Cao} proved that the flow~\eqref{eq:Cao-KRF} has STE and uniqueness. In this case, as in~\eqref{eq:BX-actual}, the preservation of K\"ahlerity was forced by the formulation of the flow. However, in this case, this argument leaves no possible gap, for the following reason. We \emph{already know} that the Ricci flow always has STE and uniqueness. The solution of~\eqref{eq:Cao-KRF} also solves the ordinary Ricci flow with the same initial condition, so it \emph{must be} the unique short time solution which we know exists. There cannot exist other solutions to the Ricci flow with this initial condition which do not stay K\"ahler.
\end{itemize}
Thus, in order for the Bryant--Xu approach to short time existence of the $\Gt$ Laplacian flow to be on the same level of completeness as the Cao approach to the K\"ahler-Ricci flow described above, we would need to know that the flow $\partial_t \ph = \Delta_d \ph$ \emph{always} has STE and uniqueness without any particular conditions on the initial $\Gt$-structure. It would then follow by uniqueness and the Bryant--Xu result that the condition of being closed is preserved by the $\Gt$ Laplacian flow. (Although if one could prove STE in general for the $\Gt$ Laplacian flow with no assumption on the initial torsion, then it is likely that one could also prove preservation of the closed condition using the maximum principle, again in analogy with the Ricci flow starting with a K\"ahler metric.) This has not been done (and may not even be true). More realistic is the following open question.

\begin{question} \label{quest:Lap}
    Does there exist some flow of $\Gt$-structures $\partial_t \ph = P \ph$ which \emph{always} has STE and uniqueness without any assumotion on the initial torsion, such that (i) if $\ph_0$ is closed, then $\ph_t$ is closed for all $t$ for which the flow exists, and (ii) when $\ph_0$ is closed, the flow reduces to the $\Gt$ Laplacian flow $\partial_t \ph = \Delta_d \ph$.
\end{question}

Nevertheless, even with this slightly unsatisfying issue regarding STE for the $\Gt$ Laplacian flow, it remains the most well studied flow, and it exhibits many very nice properties. Many foundational results were proved in a series of papers~\cite{LW1, LW2, LW3} by Lotay--Wei, including:
\begin{itemize}
    \item Characterization of the blow-up time. The $\Gt$ Laplacian flow exists for all time such that $|\tRm|^2 + |\nabla T|^2$ remains bounded.
    \item Stability. If $\ph_0$ is sufficiently close to a torsion-free $\Gt$-structure $\wt{\ph}$ in the cohomology class $[\ph_0]$, then the $\Gt$ Laplacian flow has \emph{long time existence} and \emph{converges} to a torsion-free $\Gt$-structure that is in the diffeomorphism orbit of $\wt{\ph}$. (Note that by Joyce's Theorem~\ref{thm:Joyce}, if $\ph_0$ is closed and ``close'' to being torsion-free, then there exists a nearby torsion-free $\Gt$-structure $\wt{\ph}$ in the class $[\ph_0]$, but one should be able to prove this independently using the $\Gt$ Laplacian flow.)
    \item For all $t$ for which the solution to the $\Gt$ Laplacian flow exists, the solution $\ph_t$ is \emph{real analytic}.
\end{itemize}

Some work has also been done for \emph{dimensional reduction} of the $\Gt$ Laplacian flow:
\begin{itemize}
    \item Consider the case where $M^7 = T^3 \times X^4$ where $X^4$ is equipped with a \emph{hypersymplectic structure}. This means that $X^4$ is a compact oriented $4$-manifold equipped with a triple of symplectic forms $\omega_1, \omega_2, \omega_3$ such that the symmetric $3 \times 3$ matrix $Q$ defined by $2 Q_{ij} \mu = \omega_i \w \omega_j$ for any volume form $\mu$ is \emph{positive definite}. Then one defines a $T^3$-invariant closed $\Gt$-structure $\ph_0$ on $M$ by
    \begin{equation*}
    \ph_0 = dt^1 \w dt^2 \w dt^3 - dt^1 \w \omega_1 - dt^2 \w \omega_2 - dt^3 \w \omega_3,
    \end{equation*}
    where $t^1, t^2, t^3$ are the standard (periodic) coordinates on $T^3$. Fine--Yao studied the $\Gt$ Laplacian flow in this setting, in the papers~\cite{Fine-Yao, Fine-Yao-survey}. The flow induces a ``hypersymplectic flow'' of the triple $\omega_1, \omega_2, \omega_3$ on $X^4$, and moreover in this case the flow exists for as long as the scalar curvature remains bounded.
    \item Lambert--Lotay~\cite{Lambert-Lotay} related the $\Gt$ Laplacian flow on $M^7 = B^3 \times T^4$, where $B^3 \subseteq \R^3$ is simply connected, and $\ph_0$ is a $T^4$-invariant closed $\Gt$-structure, to \emph{spacelike mean curvature flow} of $B$ in $\R^{3,3}$.
    \item Picard--Suan~\cite{Picard-Suan} consider the case where $M^7$ is a product of a flat torus with a K\"ahler $2$-fold or $3$-fold $X$ which has vanishing first Chern class. This admits a canonical closed $\Gt$-structure, and they show that the $\Gt$ Laplacian flow in this context reduces to a ``Monge--Amp\`ere type'' flow of $\SU(2)$- or $\SU(3)$-structures on $X$ which have long time existence and convergence to a limit in the diffeomorphism orbit of the unique Ricci-flat K\"ahler metric known to exist by Yau's Theorem.
\end{itemize}

\section{Other geometric flows of $\Gt$-structures} \label{sec:other-flows-G2}

In this section we give a brief survey, with many references, of several other geometric flows of $\Gt$-structures. An excellent survey on the state of knowledge of geometric flows of $\Gt$-structures circa 2017 is Lotay~\cite{Lotay-survey}.

\textbf{The coflow and modified coflow.} Motivated by the success of the $\Gt$ Laplacian flow, it is natural to consider the ``dual'' setting. That is, start with a \emph{coclosed} $\Gt$-structure, represented by a closed positive $4$-form $\ps_0$ (a $4$-form is positive if it is the Hodge star of a $\Gt$-structure) and evolve it by
\begin{equation*}
    \partial_t \ps = \Delta_d \ps_t, 
\end{equation*}
the Hodge Laplacian flow of the $4$-form. It is reasonable to hope that this ``Laplacian coflow'' might have short time existence and uniqueness, and moreover preserve the coclosed condition by staying in the cohomology class $[\ps_0] \in H^4 (M, \R)$. This flow was introduced (albeit with the opposite sign) in Karigiannis--McKay--Tsui~\cite{KMT-coflow} and was dubbed the \emph{coflow} of $\Gt$-structures, but no analytic properties of this flow were addressed in that paper.

Later, Grigorian~\cite{Grigorian-modified} showed that the Bryant--Xu approach to the coflow \emph{does not work}. Indeed it is still unknown whether the coflow (with initial closed $4$-form) has STE in general or not.

\begin{remark} \label{rmk:4form-existence}
 This phenomenon seems to be related to the following curious fact: attempting to prove an analogue of Joyce's existence Theorem~\ref{thm:Joyce} for $4$-forms appears to be much more difficult. That is, if we assume that $\ps_0$ is a closed positive $4$-form on a compact $M$, and suppose that it is sufficiently close to torsion-free in some precise analytic sense, then can we always find a torsion-free $\Gt$-structure $\ph$ whose dual $4$-form $\ps$ is close to $\ps_0$ and in the same cohomology class $[\ps_0] \in H^4 (M, \R)$? This problem is more difficult, because we are modifying our initial $4$-form by an exact $4$-form, and this space is more complicated than the space of exact $3$-forms. The author is currently investigating this question with Shubham Dwivedi.
\end{remark}

In the same paper~\cite{Grigorian-modified}, Grigorian introduced the \emph{modified coflow}, defined to be
\begin{equation*}
    \partial \ps_t = \Delta_d \ps_t + 2 d \big( (C - \tr_{g_t} T_t) \star_t \ps_t \big), \qquad d \ps_0 = 0,
\end{equation*}
where $C$ is a constant. Grigorian proved that the Bryant--Xu appraoch works in this case, so the modified coflow has STE and uniqueness, and stays in the cohomology class $[\ps_0]$. It is not clear what the fixed points of this flow are, but this is currently being studied in both this case and another flow called the $\Gt$ \emph{anomaly flow} (which we do not discuss here) by the author with Sebastien Picard and Caleb Suan. See also the survey~\cite{Grigorian-survey-cc} by Grigorian for a general discussion of the state of knowledge of flows of coclosed $\Gt$-structures circa 2017.

\textbf{The Dirichlet energy gradient flow.} Because the ``best'' class of $\Gt$-structures are those with zero torsion, it is a natural idea to consider the \emph{Dirichlet energy functional} $\ph \mapsto \int_M |T_{\ph}|^2 \vol_{\ph}$, which is the (square of) the $L^2$ norm of the torsion, and to consider its \emph{negative gradient flow}. This was studied by Weiss--Witt in~\cite{WW1, WW2}, where they proved that this flow does indeed have short time existence and uniqueness with no assumption on the initial torsion. (See also Section~\ref{sec:flows2}, particularly Theorem~\ref{thm:DGK} for a generalization of this result.) These ideas were later extended to a ``spinorial energy flow'' on spin manifolds by Ammann--Weiss--Witt~\cite{AWW}.

\textbf{The isometric flow of $\Gt$-structures.} The nonlinear map $\ph \mapsto g_{\ph}$ that sends a $\Gt$-structure to its induced Riemannian metric is \emph{not} injective. In fact, if $g$ is a metric on a manifold $M^7$ that admits $\Gt$-structures, then at any given point $p \in M$, the space of positive $3$-forms in $\Lambda^3 (T_p^* M)$ inducing $g_p$ is an $\R \mathbb{P}^7$. This was first explained by Bryant~\cite{Bryant-some-remarks}. It is natural to ask if there is a ``best'' $\Gt$-structure inducing a given metric $g_0$, and furthermore a natural way to define ``best'' here is to attempt to minimize the Dirichlet energy functional \emph{restricted to the set of $\Gt$-structures inducing $g_0$.} This leads to the \emph{isometric flow} or $\Div T$ flow of $\Gt$-structures, which is
\begin{equation} \label{eq:isometric-flow}
    \partial_t \ph_t = (\Div T_t) \hk \ps_t,
\end{equation}
first introduced by Grigorian~\cite{Grigorian-isometric}. Short time existence and uniqueness for this flow is easy to prove, because this flow is actually \emph{strongly parabolic}. (It is easy to see that it cannot be invariant under the full diffeomorphism group because the metric $g_0$ is held fixed.) See Bagaglini~\cite{Bagaglini} for an early proof.

A great many analytic results for the isometric flow have been proved, by Grigorian~\cite{Grigorian-isometric} and Dwivedi--Gianniotis--Karigiannis~\cite{DGK-isometric}. These include: derivative estimates, characterization of the singular time, compactness of the solution space, almost monotonicity of a localized energy, an $\eps$-regularity theorem, LTE and convergence given small initial entropy, and structure of the singular set. See also the survey~\cite{Grigorian-survey-isometric} by Grigorian on the isometric flow.

The isometric flow has been generalized to other $G$-structures by Loubeau--S\'a Earp~\cite{LS1}, where it is called the \emph{harmonic flow of $G$-structures}, as it is shown that in many ways it is quite similar to the harmonic map heat flow. This has been applied to $\Spin(7)$-structures by Dwivedi--Loubeau--S\`a Earp~\cite{DLS} and to $\Sp(2)$-structures by Loubeau--Moreno--S\'a Earp--Saavedra~\cite{LMSS}. In a recent paper Fadel--Loubeau--Moreno--S\'a Earp~\cite{FLMS} prove more analytic results about the harmonic flow of $G$-structures.

Of course, the isometric flow cannot be used to flow to \emph{torsion-free} $\Gt$-structures, because those have Ricci-flat metrics, and the metric is fixed in the isometric flow. However, in Section~\ref{sec:flows2} we discuss a ``coupling'' of Ricci-flow of metrics with the isometric flow of $\Gt$-structures which is a very promising candidate for a natural geometric flow of $\Gt$-structures.

\begin{remark} \label{rmk:Amanda}
The author's current PhD student Amanda Petcu is working on problems related to flows of $\Gt$-structures. Given a compact oriented $4$-manifold $X^4$ equipped with a hypersymplectic triple $\omega_1, \omega_2, \omega_3$, then we can define a $T^3$-invariant \emph{coclosed} $\Gt$-structure on $T^3 \times X^4$ by
\begin{equation*}
    \ps_0 = \vol_X - dt^2 \w dt^3 \w \omega_1 - dt^3 \w dt^1 \w \omega_2 - dt^1 \w dt^w \w \omega_3,
\end{equation*}
 where $\vol_X$ is the Riemannian volume form on $X$ associated to the hypersymplectic metric. Petcu has shown that in this setting, the original coflow and the modified coflow \emph{agree}, so they both have STE, and in fact in this case the induced flow on the triple $\omega_1, \omega_2, \omega_3$ is again the \emph{hypersymplectic flow} of Fine--Yao~\cite{Fine-Yao, Fine-Yao-survey}. Petcu is also removing the assumption that the $\omega_i$ are closed and studying the isometric flow in this context.
\end{remark}

\section{Some recent results on geometric flows of $\Gt$-structures} \label{sec:flows2}

In this section we survey some of the main results of the recent preprint~\cite{DGK-flows2} entitled ``\emph{Flows of $\Gt$-structures, II: Curvature, torsion, symbols, and functionals}'' by the author in collaboration with Shubham Dwivedi and Panagiotis Gianniotis. This paper is a sequel to the earlier paper~\cite{K-flows1} by the author which initiated the study of general flows of $\Gt$-structures.

Is the $\Gt$ Laplacian flow the best flow to evolve a $\Gt$-structure towards a torsion-free $\Gt$-structure?
\begin{itemize}  \setlength\itemsep{-1mm}
\item Recall that the known proof of STE in this case is unsatisfying, given that the preservation of closedness is built-in from the outset.
\item Moreover, it is not clear if \emph{starting closed and preserving the cohomology class} is the right thing to do. We don't know if there is at most one torsion-free $\Gt$-structure in a given cohomology class.
\end{itemize}

\begin{remark} \label{rmk:compare-CT}
Compare this situation with K\"ahler/Calabi--Yau geometry: Yau's theorem says we can start with a closed $\omega$ and find a (globally unique!) Ricci-flat K\"ahler form $\tilde \omega$ in the cohomology class $[\omega] \in H^2 (M, \R)$. There is also a parabolic proof by Cao~\cite{Cao} using K\"ahler--Ricci flow. Both approaches very heavily rely on the $\del \bar{\del}$-lemma of K\"ahler geometry. There is no such result in $\Gt$-geometry. The problem is that while the complex and symplectic geometry of a K\"ahler manifold decouple, no such decoupling happens in $\Gt$-geometry.
\end{remark}

What would be a ``good'' geometric flow of $\Gt$-structures?
\begin{itemize} \setlength\itemsep{-1mm}
    \item Any reasonable geometric flow of $\Gt$-structures should of course have STE and uniqueness.
    \item Ideally (in analogy with Ricci flow), it should be amenable to a DeTurck type trick yielding equivalence with a strongly parabolic (heat-like) flow.
    \item It should be of the form $\partial_t \ph_t = P \ph_t$, where $\ph \mapsto P \ph$ is some \emph{second order quasilinear differential invariant} of $\Gt$-structures.
\end{itemize}

Thus, in order to identify all such possible heat-like flows of $\Gt$-structures, we need to find all the (independent) second order differential invariants of a $\Gt$-structure, \emph{which are $3$-forms}. Recall from Section~\ref{sec:g2-basics} that the space of $3$-forms on $M$ decomposes as
\begin{equation*}
    (\Omega^1 \oplus \Omega^3_{27}) \oplus \Omega^3_7 \cong \cS^2 \oplus \mathfrak{X}.
\end{equation*}
That is, every $3$-form corresponds uniquely to a pair $(h, X)$ where $h$ is a symmetric $2$-tensor and $X$ is a vector field. So we need to identify all the (independent) second order differential invariants of a $\Gt$-structure which are either symmetric $2$-tensors or vector fields.

Recall that the torsion $2$-tensor $T$ of a $\Gt$-structure $\ph$ decomposes as
\begin{equation*}
T = \underset{\text{symmetric}}{(T_1 + T_{27})} + \underset{\text{skew-symm}}{(T_7 + T_{14})}
\end{equation*}
where
\begin{align*}
    T_1 & = \tfrac{1}{7} (\tr T) g, & T_{27} & \in \mathcal{S}^2_0, & T_7 & = (\vec{T} \hk \ph) \in \Omega^2_7, & T_{14} & \in \Omega^2_{14}.
\end{align*}
here $\vec{T}$ is the vector field corresponding to the section $T_7 \in \Omega^2_7$.

We can similarly decompose the Riemann curvature tensor $\tRm$ into irreducible $\Gt$-representations. Recall that a Riemann curvature tensor in any dimension $n$ \emph{orthogonally} decomposes as $\tRm = a_n R g \owedge g + b_n g \owedge \tRc^0 + W$ where $a_n, b_n$ are dimensional constants, $\owedge$ denotes the Kulkarni-Nomizu product, $\tRc^0 = \tRc - \frac{1}{n} R g$ is the trace-free part of the Ricci tensor, and $W$ is the Weyl curvature tensor. In the presence of a $\Gt$-structure, the Weyl curvature tensor decomposes into three independent pieces. A decscription of this using abstract representation theory is given by Cleyton--Ivanov~\cite{Cleyton-Ivanov}, and an explicit computational approach using the $\Gt$-structure contraction identities is given by Dwivedi--Gianniotis--Karigiannis~\cite{DGK-flows2}. The result is that the Riemann curvature tensor $\tRm$ of $g_{\ph}$ decomposes as
\begin{equation*}
    \tRm = \underset{\text{Ricci curvature}}{( \tfrac{1}{84} R g \owedge g + \tfrac{1}{5} \tRc^0 \owedge g )} + \underset{\text{Weyl curvature}}{( W_{27} + W_{64} + W_{77} )}
\end{equation*}
where $(W_{27})_{ijkl}$ is a curvature tensor which can be expressed in terms of $\ph$, $g$, and a symmetric traceless $2$-tensor $(W_{27})_{ab}$ extracted from the Weyl curvature using $\ph$. The important thing is that $W_{27}$ \emph{is a type of $3$-form}.

Thus there are three independent symmetric $2$-tensors $Rg$, $\tRc^0$, $W_{27}$ coming from the Riemann curvature tensor, and there are no vector fields.

It is computationally convenient to introduce the symmetric $2$-tensor $F_{ij}$ given by
\begin{equation*}
    F_{ij} = R_{abcd} \ph_{abi} \ph_{cdj}.
\end{equation*} 
With some work one can show that
\begin{equation*}
    W_{27} = F + \tfrac{2}{7} R g - \tfrac{4}{5} \tRc^0.
\end{equation*} 
We therefore can and will consider the simpler basis $\{ R g, \tRc, F \}$ for the span of $\{ R_g, \tRc^0, W_{27} \}$.

However, in addition to the Riemann curvature tensor, there is another second order differential invariant of a $\Gt$-structure, namely the covariant derivative $\nabla T$ of the torsion tensor. Note that
\begin{equation*}
    \nabla T \in \Gamma( T^* M \otimes T^* M \otimes T^* M),
\end{equation*}
so to decompose $\nabla T$ into components belonging to orthogonal irreducible $\Gt$-representations, we need to know the splitting of $T^* M \otimes T^* M \otimes T^* M = T^* M \otimes (T^* M \otimes T^* M)$, which we write symbolically as 
\begin{equation*}
    7 \otimes (1 \oplus 27 \oplus 7 \oplus 14).
\end{equation*}
It turns out that
\begin{equation} \label{eq:rep-th}
\begin{aligned}
    7 \otimes 1 & \cong 7, & \qquad 7 \otimes 27 & \cong 77^* \oplus 7 \oplus 64 \oplus 27 \oplus 14, \\
    7 \otimes 7 & \cong 1 \oplus 27 \oplus 7 \oplus 14, & \qquad 7 \otimes 14 & \cong 7 \oplus 27 \oplus 64.
\end{aligned}
\end{equation}
Here $64$ and $77^*$ are irreducible $\Gt$-representation spaces of those dimensions. (The $64$ is the same space that $W_{64}$ lies in, while the $77^*$ is a \emph{different} $77$-dimensional representation from the one that $W_{77}$ lies in.) The upshot is that we get \emph{a lot} of components of $\nabla T$ which can be identified with $3$-forms. Namely,
\begin{equation*}
    \text{$\nabla T$ contains four vector fields, three traceless symmetric $2$-tensors, and one function.}
\end{equation*}
We can explicitly identify these tensors (always up to lower order terms of the form $T \ast T$, which we are ignoring for the purposes of this discussion), as follows:
\begin{itemize} \setlength\itemsep{-1mm}
    \item The four vector fields contained in $\nabla T$ are: $\Div T$, $\Div T^t$, $\nabla(\tr T)$, and $(\nabla T) \hk \ps$. We can write these in a local orthonormal frame as $\nabla_i T_{ip}$, $\nabla_i T_{pi}$, $\nabla_p T_{ii}$, and $\nabla_i T_{jk} \ps_{ijkp}$.
    \item The three traceless symmetric $2$-tensors contained in $\nabla T$ are the trace-free parts of the symmetrizations of $(\nabla_p T_{jk}) \ph_{qjk}$, $(\nabla_i T_{pk}) \ph_{iqk}$$, $$(\nabla_i T_{jp}) \ph_{ijq}$. (These are all of the possible contractions of $\nabla T$ with $\ph$ on two pairs of the three pairs of indices.) Moreover, again up to lower order terms, it can shown that the first of these three is also be expressible as the trace-free part of $\cL_{\vec{T}} g$.
    \item The function contained in $\nabla T$ is the trace of any of the above tensors, equivalently $\langle \nabla T, \ph \rangle = \nabla_i T_{jk} \ph_{ijk}$.
\end{itemize}

So far we have assembled a veritable zoo of second order differential invariants of $\ph$ that can be identified with $3$-forms, and thus can be used as ingredients to construct a heat-like flow of $\Gt$-structures. However, these second order invariants \emph{are not all independent}. Recall that the $\Gt$-Bianchi identity~\eqref{eq:g2bianchi} gives a nontrivial relation between $\tRm$ and $\nabla T$. We repeat it here:
\begin{equation*}
\nabla_i T_{jl} - \nabla_j T_{il} = T_{ia} T_{jb} \ph_{abl} + \tfrac{1}{2} R_{ijab} \ph_{abl}. \end{equation*}
The important thing to note is that the $\Gt$-Bianchi identity is skew in $i, j$, so it is a section of $\Gamma(T^* M \otimes \Lambda^2 (T^* M))$. This means that in terms of irreducible $\Gt$-representations, it follows from~\eqref{eq:rep-th} that the $\Gt$-Bianchi identity is a relation of the form
\begin{equation*}
    7 \otimes (7 \oplus 14) = (1 \oplus 27 \oplus 7 \oplus 14) \oplus (7 \oplus 27 \oplus 64).
\end{equation*}
The above decomposition can be carefully analyzed to reveal:
\begin{itemize}
    \item There is one relation between the functions contained in $\tRm$ and $\nabla T$. Explicitly, it says that, up to lower order terms and overall scalar factors, $\langle \nabla T, \ph \rangle$ is the same as the scalar curvature $R$.
    \item There are two relations between the vector fields contained in $\nabla T$. Explicitly, it turns out that $(\nabla T) \hk \ps$ is entirely lower order (hence can be ignored for the purposes of constructing the leading order behaviour of the flow), and that, up to lower order terms and overall scalar factors, $\nabla (\tr T)$ is the same as $\Div T^t$.
    \item There are two relations between the traceless symmetric $2$-tensors contained in $\tRm$ and $\nabla T$. Explicitly, it turns out that, again up to lower order terms, we can express the last two of the symmetric $2$-tensors coming from $\nabla T$ in terms of the symmetric $2$-tensors $\tRc$, $F$, and $\cL_{\vec{T}} g$.
\end{itemize}
This discussion can be summarized in the following theorem.

\begin{theorem}[Dwivedi--Giannotis--K., 2023]
A basis for the \emph{independent} second order differential invariants of a $\Gt$-structure $\ph$ are:
\begin{equation*}
\underset{\textrm{symmetric $2$-tensors}}{Rg, \, \, \tRc, \, \, F, \, \, \cL_{\vec{T}} g}, \qquad \underset{\textrm{vector fields}}{\Div T, \, \, \Div T^t}.
\end{equation*}
\end{theorem}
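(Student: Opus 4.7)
The plan is to identify all second order differential invariants of a $\Gt$-structure $\ph$ modulo lower order terms (expressions that are algebraic in $T$ and $g$), then to organize them according to their $\Gt$-type, and finally to use the $\Gt$-Bianchi identity~\eqref{eq:g2bianchi} to extract a minimal set. Since a $\Gt$-structure is equivalent to the pair $(g, T)$ subject to compatibility, and since derivatives of $g$ of order $\leq 1$ are lower order terms in Riemannian normal coordinates, the only second order pieces come from $\tRm$ (curvature of $g_{\ph}$) and $\nabla T$. The output we want is the subspace that can be realized as sections of $\Omega^3 \cong \cS^2 \oplus \mathfrak{X}$, so we are interested in symmetric $2$-tensor and vector field components only.

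First I would handle $\tRm$. Using the orthogonal splitting $\tRm = \tfrac{1}{84} R g \owedge g + \tfrac{1}{5} \tRc^0 \owedge g + W_{27} + W_{64} + W_{77}$ and recalling that $W_{64}$ and $W_{77}$ carry no symmetric $2$-tensor or vector field content (they live in the $64$ and $77$ irreducible $\Gt$-representations, neither of which appears in $\cS^2 \oplus \mathfrak{X}$), the only second order symmetric $2$-tensors extractable from $\tRm$ are spanned by $Rg$, $\tRc^0$, and $W_{27}$. Replacing $W_{27}$ by the explicit tensor $F_{ij} = R_{abcd} \ph_{abi} \ph_{cdj}$, for which the paper has already indicated $W_{27} = F + \tfrac{2}{7} R g - \tfrac{4}{5} \tRc^0$, gives an equivalent spanning set $\{ Rg, \tRc, F \}$. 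There are no vector field components of $\tRm$, by the fact that $\Lambda^{2}(\R^7)^{\otimes 2}$ decomposes under $\Gt$ without producing a copy of $\R^7$ in the appropriate Bianchi-symmetric subspace.

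Next I would decompose $\nabla T \in \Gamma( T^* M \otimes T^* M \otimes T^* M)$ using the representation identities~\eqref{eq:rep-th}. Extracting only the $\R^7$ and $\R \oplus \cS^2_0$ isotypic components, one gets at most four vector fields, three traceless symmetric $2$-tensors, and one function. Concretely I would list: the vector fields $\Div T$, $\Div T^t$, $\nabla (\tr T)$, and $(\nabla T) \hk \ps$; the three traceless symmetric tensors obtained by contracting one index of $\nabla T$ with $\ph$ and tracing on the remaining pair (pointing out that the first, modulo $T \ast T$, equals the trace-free part of $\cL_{\vec T} g$); and the function $\langle \nabla T, \ph \rangle$. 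This step is mostly a bookkeeping exercise in representation theory once the decompositions~\eqref{eq:rep-th} are in hand.

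The main obstacle, and the crucial step, is to extract the precise relations imposed by the $\Gt$-Bianchi identity~\eqref{eq:g2bianchi}. I would view~\eqref{eq:g2bianchi} as an identity in $\Gamma(T^* M \otimes \Lambda^2 T^* M) \cong \Gamma(T^* M \otimes (\Omega^2_7 \oplus \Omega^2_{14}))$, which decomposes into $(1 \oplus 27 \oplus 7 \oplus 14) \oplus (7 \oplus 27 \oplus 64)$. Projecting the identity onto each irreducible isotypic piece and carefully using the contraction formulae~\eqref{eq:contractions} yields: in the $1$-piece, a linear relation between $\langle \nabla T, \ph \rangle$ and $R$; in the two $7$-pieces, relations showing that $(\nabla T) \hk \ps$ is entirely lower order and that $\nabla (\tr T)$ is a multiple of $\Div T^t$ modulo lower order; and in the two $27$-pieces, relations expressing the remaining two symmetric $2$-tensors from $\nabla T$ in terms of $\tRc$, $F$, and $\cL_{\vec T} g$. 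Each such projection is a direct, if tedious, computation involving multiple uses of the first identity in~\eqref{eq:contractions}; the hardest part is keeping track of the numerical constants and the $T\ast T$ remainders, and verifying that the two $27$-projections genuinely produce independent relations so that exactly two of the three candidate trace-free symmetric tensors from $\nabla T$ are killed modulo $\{Rg, \tRc, F, \cL_{\vec T} g\}$. Tallying the results, the second order invariants valued in $\cS^2 \oplus \mathfrak{X}$ are spanned by $\{Rg, \tRc, F, \cL_{\vec T} g\} \cup \{\Div T, \Div T^t\}$, and a final dimension count (for example, by evaluating each at a carefully chosen point on a suitable example such as a nearly parallel or warped-product $\Gt$-structure) confirms linear independence.
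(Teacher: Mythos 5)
Your proposal follows essentially the same route as the paper: decompose $\tRm$ and $\nabla T$ into irreducible $\Gt$-representations, keep only the pieces valued in $\cS^2 \oplus \mathfrak{X}$, and then use the $\Gt$-Bianchi identity~\eqref{eq:g2bianchi}, projected onto its $1$, $7$, and $27$ isotypic components, to eliminate exactly the redundant invariants, leaving $\{Rg, \tRc, F, \cL_{\vec{T}} g\}$ and $\{\Div T, \Div T^t\}$. This matches the argument given in the text (with full details in~\cite{DGK-flows2}); your closing independence check via examples is a harmless addition.
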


This theorem shows that, up to lower order terms which can be shown to all be schematically of the form $T \ast T$, any second order nonlinear but quasilinear flow of $\Gt$-structures can be expressed as
\begin{equation} \label{eq:general-flow2}
    \partial_t \ph = (c_1 R g + c_2 \tRc + c_3 F + c_4 \cL_{\vec{T}} g) \diamond \ph + (c_5 \Div T + c_6 \Div T^t) \hk \ps, 
\end{equation}
for some choice of real constants $c_1, \ldots, c_6$.

We want to determine sufficient conditions on these coefficients so that the flow~\eqref{eq:general-flow2} admits a DeTurck type trick which can then be used to prove short time existence and uniqueness by establishing a correspondence with a strongly parabolic flow.

One can show that if $\ph_t$ evolves by~\eqref{eq:general-flow2}, then its induced metric $g_t$ evolves by
\begin{equation*}
    \partial_t g = 2 (c_1 R g + c_2 \tRc + c_3 F + c_4 \cL_{\vec{T}} g).
\end{equation*}
Thus, since the Ricci flow is very well behaved, it makes sense for us to fix $c_2 = -1$, so that the induced flow of metrics is a perturbation of the Ricci flow. For simplicity (although it is not necessary), let us also choose $c_1 = 0$. Relabelling the coefficients to match~\cite{DGK-flows2}, we are considering the flow
\begin{equation} \label{eq:general-flow3}
    \partial_t \ph = (- \tRc + \lambda F + a \cL_{\vec{T}} g) \diamond \ph + (b_1 \Div T + b_2 \Div T^t) \hk \ps + \l ot,
\end{equation}
for $a, \lambda, b_1, b_2 \in \R$. Then we have the following result.

\begin{theorem}[Dwivedi--Gianniotis--Karigiannis~\cite{DGK-flows2}] \label{thm:DGK}
Suppose that
\begin{equation} \label{eq:DGK-inequalities}
0 \leq b_1 - a-1 < 4, \quad b_1 + b_2 \geq 1, \quad \text{and} \quad |\lambda| < \tfrac{1}{4} (1 - \tfrac{1}{4} (b_1 - a-1)).
\end{equation}
Then the flow~\eqref{eq:general-flow3} has short time existence and uniqueness, \emph{by a slight modification of the DeTurck trick}.
\end{theorem}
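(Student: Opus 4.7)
The plan is to carry out a variant of the DeTurck trick from Section~\ref{sec:DeTurck} in the $\Gt$ setting. First I would produce a vector field $\W(\ph)$ depending on $\ph$ (and a fixed reference $\Gt$-structure $\ph_0$) such that the modified flow
\begin{equation*}
\partial_t \ph = (-\tRc + \lambda F + a \cL_{\vec{T}} g) \diamond \ph + (b_1 \Div T + b_2 \Div T^t) \hk \ps + \cL_{\W(\ph)} \ph + \lot
\end{equation*}
is strongly parabolic in the sense of Definition~\ref{defn:QL-parabolic}, so that Theorem~\ref{thm:parabolic-STE} gives a unique short-time solution. Existence and uniqueness for~\eqref{eq:general-flow3} would then follow by transferring back via the harmonic map heat flow, exactly as in the proof of Theorem~\ref{thm:RF-STE}.

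The first technical step is to compute the principal symbol $B := \sigma_2(D_\ph P)(\xi)$ of the linearisation of the right-hand side of~\eqref{eq:general-flow3}. Under the identification $\Omega^3 \cong \cS^2 \oplus \mathfrak{X}$, this is a $2 \times 2$ block operator on pairs $(h, X)$: the $-\tRc$ contribution lands in the $\cS^2$-block and is exactly the symbol $B$ of~\eqref{eq:Bsymb}, while the contributions from $\lambda F$, $a \cL_{\vec{T}} g$, $b_1 \Div T$, $b_2 \Div T^t$ can be read off using the contraction identities~\eqref{eq:contractions} and produce off-diagonal coupling between the two blocks. Since all six invariants are diffeomorphism-equivariant, the Section~\ref{sec:diff-inv} composition argument applies verbatim to give $B \circ \sigma_1(\Phi)(\xi) = 0$, where $\Phi(Y) = \cL_Y \ph$, and an argument in the spirit of Proposition~\ref{prop:kernel-DRc} would show that $\ker B$ is precisely the $7$-dimensional image of $\sigma_1(\Phi)(\xi)$, matching $\dim M$.

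The appropriate DeTurck vector field is the direct analogue of~\eqref{eq:DeTurck-W}, namely $\W(\ph) = g^{ij}((\Gamma^{g_\ph})^k_{ij} - (\Gamma^{g_0})^k_{ij}) \partial_k$, where $g = g_\ph$ and $g_0 = g_{\ph_0}$. Adding $\cL_{\W(\ph)} \ph$ contributes a gauge-breaking symbol that kills the diffeomorphism kernel on both blocks simultaneously, because a Lie derivative of a $3$-form couples to both $\Omega^3_1 \oplus \Omega^3_{27}$ and $\Omega^3_7$. The heart of the proof, and where I expect the main obstacle to lie, is showing that the resulting modified symbol $\wt B$ is positive definite on all of $\Omega^3$ \emph{precisely under the hypotheses}~\eqref{eq:DGK-inequalities}. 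Decomposing $h = \tfrac{1}{7}(\tr h) g + h^0$ and splitting $h^0(\xi)$ into parts parallel and orthogonal to $\xi$, the pairing $\langle \wt B(h,X), (h,X)\rangle$ should, after a sharp Cauchy-Schwarz estimate as in Proposition~\ref{prop:symb-perp} and repeated use of~\eqref{eq:contractions}, collapse to a quadratic form in a handful of scalar quantities whose coefficients are polynomials in $a, \lambda, b_1, b_2$. Positivity of this quadratic form is equivalent to positive definiteness of a small symmetric matrix in the style of Proposition~\ref{prop:RB-DeTurck}, and the three inequalities in~\eqref{eq:DGK-inequalities} should emerge as its Sylvester conditions, with $b_1 - a - 1$ controlling the $\mathfrak{X}$-block contributed by $b_1 \Div T + b_2 \Div T^t$ and $a\cL_{\vec T} g$, $b_1 + b_2$ controlling the $\tr h$ direction, and $|\lambda|$ bounding the coupling produced by $F$ between the trace-free symmetric block and the rest.

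Once $\wt B$ is shown to be uniformly positive definite, Theorem~\ref{thm:parabolic-STE} yields short-time existence and uniqueness for the modified flow. The transfer to~\eqref{eq:general-flow3} then proceeds along the lines of~\eqref{eq:DeTurck-equivalence}: defining $F_t$ as the flow of the non-autonomous vector field $-\W(\ph(t))$ on the compact manifold $M$ and setting $\wt\ph(t) = F_t^* \ph(t)$, diffeomorphism equivariance of all six invariants in~\eqref{eq:general-flow3} shows that $\wt\ph$ solves the original flow. For uniqueness, two solutions of~\eqref{eq:general-flow3} with the same initial data pull back, via the corresponding harmonic map heat flows as in~\eqref{eq:RF-HMHF} (with domain metric $g_{\wt\ph(t)}$ time-dependent but still strictly parabolic), to two solutions of the modified flow with the same initial data, which must coincide by Theorem~\ref{thm:parabolic-STE}; pushing forward again gives $\wt\ph_1 = \wt\ph_2$ on the common interval of existence.
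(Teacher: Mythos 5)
Your overall architecture (gauge-fix, apply Theorem~\ref{thm:parabolic-STE}, transfer back via a harmonic-map-heat-flow argument) matches the paper, but there is a concrete gap at the crucial step: your choice of DeTurck vector field. You take the unmodified field $\W(\ph)^k = g^{ij}((\Gamma^{g_\ph})^k_{ij} - (\Gamma^{g_0})^k_{ij})$, whereas the "slight modification" advertised in the statement is precisely that the paper uses
\begin{equation*}
(\W(\ph))^k = g^{ij}\big( \Gamma^k_{ij} - (\Gamma^0)^k_{ij} \big) - 2a\,\vec{T}^k .
\end{equation*}
The extra term is not cosmetic. The summand $a\,\cL_{\vec{T}} g \diamond \ph$ in~\eqref{eq:general-flow3} is genuinely second order in $\ph$ (since $\vec T$ is first order), and its principal symbol is of the form $a(\xi \otimes V + V \otimes \xi)\diamond\ph$ with $V$ linear in the variation; paired against $h$ this produces an indefinite cross term $2a\,h(\xi,V)$ that the standard gauge term (which, depending only on $g_\ph$, sees only the $\cS^2$-part of the variation) does not remove. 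The $-2a\vec{T}$ addition is engineered exactly so that the second-order part of $-2a\cL_{\vec T}\ph$ cancels $a(\cL_{\vec T}g)\diamond\ph$ and shifts the remainder into the $\Omega^3_7$ block, which is why the combination $b_1 - a - 1$ appears in~\eqref{eq:DGK-inequalities}. With your unmodified field the positivity analysis is different, and there is no reason the stated inequalities would be the outcome; your claim that the three conditions "should emerge as Sylvester conditions" is exactly the part that is left unproven, and it is also where the paper needs the $\Gt$-specific operator $B_2 X = \xi_a X_b \ph_{abk}$ alongside the usual Bianchi operator $B_1$, neither of which appears in your argument.

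A secondary consequence: because the paper's gauge field contains the torsion term, the diffeomorphisms $F_t$ generated by $-\W(\ph(t))$ do \emph{not} immediately satisfy the harmonic map heat flow~\eqref{eq:RF-HMHF}, and the uniqueness argument requires an additional step compared with the Ricci-flow case; your transfer-back paragraph, modeled verbatim on Theorem~\ref{thm:RF-STE}, silently assumes this issue is absent. (It would be absent for your choice of $\W$, but only because you have not fixed the parabolicity problem that forces the modified choice in the first place.) To repair the proposal, replace the gauge field by the modified one, redo the symbol computation with the $a$-dependent cancellation, and add the extra step relating the resulting diffeomorphism flow to a solvable parabolic system before invoking uniqueness.
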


There are various interesting remarks we can make about this result.
\begin{itemize} \setlength\itemsep{-1mm}
    \item First, we note that we do \emph{not} make any assumptions on the torsion of the intial $\Gt$-structure $\ph_0$. For example, we do not assume $\ph_0$ is closed or coclosed.
    \item If $a = -\frac{1}{2}, \lambda = 0, b_1 = 1, b_2 = 0$, then we get the negative gradient flow of the Dirichlet energy, which was previous studied by Weiss--Witt~\cite{WW1, WW2} and shown to have STE/uniqueness.
    \item If we choose $a = \lambda = b_1 = b_2 = 0$, which would just give ``Ricci flow'' of $\ph$, then the inequalities~\eqref{eq:DGK-inequalities} are \emph{not} satisfied, so it seems that this ``pure Ricci flow'' of $\Gt$-structures is not well behaved. (Although the inequalities in Theorem~\ref{thm:DGK} are sufficient but may not be necessary.)
    \item By contrast, if we choose If $a = \lambda = b_2 = 0$ and $b_1 = 1$, the inequalities~\eqref{eq:DGK-inequalities} are satisfied and we have STE/uniqueness. This choice gives
    \begin{equation} \label{eq:Ricci-isometric}
        \partial_t \ph_t = - 2 \tRc \diamond \ph + (\Div T) \hk \ps + \l ot,
    \end{equation}
    which is some sort of coupling of Ricci flow with the isometric flow~\eqref{eq:isometric-flow}. This observation suggest that the ``best'' way to try to get to torsion-free might be to evolve the $\Gt$-structure $\ph$ in such a way that the induced metric $g$ evolves by Ricci flow, while the component of the evolution of $\ph$ which does not affect the metric is precisely that which would decrease the Dirichlet energy as quickly as possible if the metric were held fixed.
    \item Gao Chen~\cite{GaoChen} defines a flow of $\Gt$-structures to be a \emph{reasonable flow} if it admits short-time existence and uniqueness, and if it has $\lambda = a = 0$ in~\eqref{eq:general-flow3}, with $b_1, b_2$ arbitrary. Chen proves general Shi-type derivative estimates for such flows. It would be interesting to see if these arguments could be extended to all the flows satisfying the inequalities~\eqref{eq:DGK-inequalities} of Theorem~\ref{thm:DGK}.
    \item It can be shown that the $\Gt$ Laplacian flow $\partial_t \ph = \Delta_d \ph$, with no assumption on the initial torsion, does \emph{not} satisfy the inequalities~\eqref{eq:DGK-inequalities}. (But recall again that these conditions are sufficient but not necessary.) Thus Theorem~\ref{thm:DGK} cannot be directly applied to establish STE for the $\Gt$ Laplacian flow \emph{in general} as discussed just before Question~\ref{quest:Lap}. However, one could attempt to use Theorem~\ref{thm:DGK} to \emph{answer} Question~\ref{quest:Lap}. That is, we can look for a flow that \emph{does} satisfy~\eqref{eq:DGK-inequalities} and which reduces to the $\Gt$ Laplacian flow if $d \ph = 0$. (One would then also have to prove that the closed condition is preserved along such a flow, likely using the maximum principle.)
\end{itemize}

\begin{proof}[Brief sketch of the main ideas for the proof of Theorem~\ref{thm:DGK}.]
By explicitly computing the principal symbols of the operators $\tRc$, $\cL_{\vec{T}} g$, $F$, $\Div T$, and $\Div T^t$, we show that if the inequalities~\eqref{eq:DGK-inequalities} are satisfied, then the failure of strong parabolicity of~\eqref{eq:general-flow3} is \emph{precisely due} to the diffeomorphism invariance. To show this, we analyze the usual \emph{Bianchi operator} $B_1 \colon \cS^2 \to \Omega^1$ given by $(B_1 h)_k = \xi_a h_{ak} - \frac{1}{2 }\xi_k (\tr h)$ as in Ricci flow, \emph{as well as} a $\Gt$-specific operator $B_2 \colon \Omega^1 \to \Omega^1$ given by $(B_2 X)_k = \xi_a X_b \ph_{abk}$.

We then perform a slight variation of the DeTurck trick. We modify the right hand side of the flow~\eqref{eq:general-flow3} by adding a term $\cL_{\W(\ph)} \ph$ with $\W(\ph) \in \Omega^1$ given 
by
\begin{equation*}
(\W(\ph))^k = g^{ij} ( \Gamma^k_{ij} - (\Gamma^0)^k_{ij} ) - 2 a \vec{T}^k.
\end{equation*}
Note that this is a modification of the usual DeTurck vector field $\W(g)$ by the addition of the term $- 2 a \vec{T}$. This yields a strongly parabolic flow ``equivalent'' to the original flow as in the classical DeTurck trick.

The proof of uniqueness is similar to the classical case, with one additional step, because we don't immediately get the harmonic map heat flow for the diffeomorphisms, since the vector field $\W(\ph)$ is different.
\end{proof}

While Theorem~\ref{thm:DGK} shows that there are many flows of $\Gt$-structures which are amenable to a DeTurck type trick to establish STE/uniqueness, this question is completely unaffected by the nature of the lower order terms, which are all schematically of the form $T \ast T$. (Slightly more explicitly, they are sums of various contractions of two components of the torsion tensor with various combinations of $\ph, \ps, g$.)

Although STE/uniqueness only depends on the second order terms, many properties of a geometric flow (such as the characterization of fixed points) are \emph{very sensitive to the lower order terms}. 

\begin{question} \label{qu:coupling}
For the coupling of Ricci flow with isometric flow given by~\eqref{eq:Ricci-isometric}, is it possible to choose the lower order terms so that the fixed points of the flow are torsion-free? This would be some sort of \emph{preferred} coupling of Ricci flow with isometric flow.
\end{question} 

\begin{question} \label{qu:lot}
If the flow induces an evolution of the pointwise norm of the torsion which is of the form of a heat equation
\begin{equation*}
    \frac{\partial}{\partial t} |T|^2 = \Delta |T|^2 + \l ot,
\end{equation*}
with ``good'' signs on the lower order terms, then one can use the maximum principle to show that nice things happen. Is it possible to choose the lower order terms in~\eqref{eq:Ricci-isometric}, or more generally in~\eqref{eq:general-flow3}, to achieve this?
\end{question}

Of course, given a particular choice of geometric flow of $\Gt$-structures, such as one of the flows in Theorem~\ref{thm:DGK}, then one can ask all the usual questions:
\begin{itemize} \setlength\itemsep{-1mm}
    \item What is the characterization of the singular time?
    \item What types of singularities can occur? Are they related to solitons? If so, what kinds of solitons can exist?
    \item When do we get long time existence? Convergence? Stability?
    \item Are there any nice \emph{monotonicity formulas} for some kind of entropy?
\end{itemize}

We note that a recent paper of Dwivedi~\cite{Dwivedi-Spin7} obtains partial results for flows of $\Spin(7)$-structures similar to those obtained in~\cite{DGK-flows2} for flows of $\Gt$-structures.

\section{Exercises} \label{sec:G2}

\begin{exercise}
Let $(M, \ph)$ be a manifold with $\Gt$-structure. Using the contraction identities~\eqref{eq:contractions}, verify that:
    \begin{enumerate}[{$[$}a{$]$}]
    \item the space $\Omega^2_{14}$ is the kernel of the map $\cT^2 \to \Omega^3$ given by $A \mapsto A \diamond \ph$,
    \item for any $Y \in \mathfrak{X}$, we have $Y \hk \ps = - \frac{1}{3} (Y \hk \ph) \diamond \ph$.
    \end{enumerate}
\end{exercise}

\begin{exercise}
By covariantly differentiating the characterization of the torsion tensor $T$ of a $\Gt$-structure $\ph$ as a $2$-tensor, namely $\nabla_i \ph_{jkl} = T_{ip} \ps_{pjkl}$, and using the Ricci identity, verify the ``$\Gt$-Bianchi identity''
    \begin{equation} \label{eq:G2-Bianchi-ex}
        \nabla_i T_{jk} - \nabla_j T_{ik} = T_{ip} T_{jq} \ph_{pqk} + \frac{1}{2} R_{ijpq} \ph_{pqk}
    \end{equation}
    which relates the Riemann curvature tensor with the torsion and its covariant derivative $\nabla T$.
\end{exercise}

\begin{exercise}
By contracting the $\Gt$-Bianchi identity~\eqref{eq:G2-Bianchi-ex} appropriately and using the contraction identities~\eqref{eq:contractions}, derive the formula~\eqref{eq:Ricci-ph} for the Ricci tensor of $g_{\ph}$ in terms of the torsion. Using this formula, verify that if $d \ph = 4 \lambda \ps$ and $d \ps = 0$, then $\tRc = 6 \lambda^2 g$. This shows that \emph{nearly parallel $\Gt$-structures} are always \emph{positive Einstein}.
\end{exercise}

\hskip -0.25in \textbf{Warped product $\Gt$-structures.} The remaining exercises in this chapter concern ``warped product'' $\Gt$-structures on $N^6 \times L^1$, where $N^6$ is equipped with an $\SU(3)$-structure. A good reference for the reader is Karigiannis--McKay-Tsui~\cite{KMT-coflow}. An $\SU(3)$-structure on a $6$-manifold $N$ is determined by an almost complex structure $J$, a $J$-compatible Riemannian metric $h$, and a nonvanishing smooth complex-valued $(3,0)$-form $\Upsilon$, such that the associated K\"ahler form $\omega (X, Y) = h (JX, Y)$ satisfies
\begin{equation*}
    \vol_N = \tfrac{1}{3!} \omega^3 = \tfrac{i}{8} \Upsilon \w \ol{\Upsilon} = \tfrac{1}{4} \real (\Upsilon) \w \imag (\Upsilon).
\end{equation*}

\begin{exercise}
Let $M^7 = N^6 \times L^1$, where $L^1$ is $\R$ or $S^1$, and denote by $r$ the standard coordinate on $L^1$. Consider a smooth, nowhere vanishing complex-valued function $F(r)$ and a smooth, everywhere positive function $G(r)$ on $L^1$.  
\begin{enumerate}[{$[$}a{$]$}]
    \item Verify that the following $3$-form defines a $\Gt$-structure on $M$:
    \begin{equation} \label{eq: warped_G2_struct}
    \ph = \real (F^3 \Upsilon) - G |F|^2 dr \w \omega,
    \end{equation}
and that $\ph$ induces the metric $g = G^2 dr^2 + |F|^2 h$ and the volume form $\vol_g = G |F|^6 dr \w \vol_h$.
    \item Show that the Hodge star operator induced by~\eqref{eq: warped_G2_struct} satisfies 
    \begin{equation*}
    \star_g \alpha = (-1)^k|F|^{6-2k} Gdr \w \star_h \alpha, \qquad \star_g (dr \w \alpha) = |F|^{6-2k} G^{-1} \star_h \alpha, \qforq \alpha \in \Omega^k (N).
    \end{equation*}
    In particular, the dual $4$-form $\ps = \star_g \ph$ is
    \begin{equation*}
    \ps = - G dr \w \imag (F^3 \Upsilon) - \tfrac{1}{2} |F|^4 \omega.
    \end{equation*}
    \end{enumerate}
\end{exercise}

\begin{exercise}
The $\SU(3)$-structure $(N^6, J, g, \omega, \Upsilon)$ is called \emph{Calabi-Yau} (CY) if $d\omega = 0$ and $d \Upsilon = 0$; and \emph{nearly K\"ahler} (NK) if
    \begin{equation*}
    d \omega = - 3 \real (\Upsilon), \qquad d \imag (\Upsilon) = 2 \omega^2.
    \end{equation*}
    \begin{enumerate}
    \item Check that
    \begin{align*}
    d \ph  & = - \left(\frac{(F^3)'}{2} \Upsilon + \frac{(\ol{F}^3)'}{2} \ol{\Upsilon} \right) \w dr + G |F|^2 dr \w d \omega + \frac{F^3}{2} d \Upsilon + \frac{\ol{F}^3}{2} d \ol{\Upsilon} \\
    d \ps & = - \frac{iGF^3}{2} dr \w d \Upsilon + \frac{i G \ol{F}^3}{2} dr \w d \ol{\Upsilon} - (|F|^4)' dr \w\frac{\omega^2}{2} -|F|^4 d \left(\frac{\omega^2}{2} \right).
    \end{align*}
            
    \item Show that~\eqref{eq: warped_G2_struct} is torsion-free iff $F$ is constant when $N^6$ is CY, and iff $F(r) = r$ when $N^6$ is NK. What can you say about $G(r)$?

    \item Writing $F(r)= \ell(r) e^{i\theta(r)}$ for some $\ell, \theta \in C^{\infty}(L^1)$, show that the torsion forms of~\eqref{eq: warped_G2_struct} are:
    \begin{align*}
    \text{when $N$ is CY:} & \qquad
    \tau_0 = \frac{12}{7G} \theta', \qquad \tau_1 = d (\log \ell), \qquad \tau_2 = 0; \\
    \text{when $N$ is NK:} & \qquad 
    \tau_0 = \frac{12}{7} \left(\frac{\theta'}{G} + \frac{2\sin(3\theta)}{\ell} \right), \qquad \tau_1 = \left(\frac{\ell' - G\cos(3\theta)}{\ell} \right) dr, \qquad \tau_2 = 0.
    \end{align*}
    We also have $\tau_3 = \star_g (d\ph) - \tau_0 \ph - 3 \star_g (\tau_1 \w \ph)$.
    \end{enumerate}
\end{exercise}

\begin{exercise}
A $1$-parameter family of $\Gt$-structures $\{\ph(t)\}_{[0,T)}$ on $M^7$ solves the~\emph{Laplacian coflow} if
    \begin{equation} \label{eq: Laplacian coflow}
    \frac{\partial}{\partial t} \ps = \Delta_d \ps, \qquad \text{for all $t \in [0,T)$},
    \end{equation}
    where $\ps = \star_{\ph(t)}\ph(t)$ and $\Delta_d = d d^* + d^* d$ is the Hodge Laplacian of  $g(t) := g_{\ph(t)}$.
    \begin{enumerate}
    \item Assuming that~\eqref{eq: warped_G2_struct} is coclosed, verify that
    \begin{align*}
    \text{when $N$ is CY:} \qquad \Delta_d \ps = & - \left( \frac{i(F^3)'}{2G} \right)' dr \w \Upsilon + \left( \frac{i(\ol{F}^3)'}{2G} \right)'dr \w \ol{\Upsilon}; \\
    \text{when $N$ is NK:} \qquad \Delta_d \ps = & - A dr\w \Upsilon - \ol{A} dr\w \ol{\Upsilon} - B \frac{\omega^2}{2},
    \end{align*}
    where
    \begin{equation*}
    A = \left(\frac{i(F^3)'}{2G} - \frac{3i \ell^2}{2} \right)' + 6 G \ell \sin(3\theta) \qquad \text{and} \qquad B = \left( -\frac{4}{G} (\ell^3 \cos(3\theta))' + 12 \ell^2 \right).
    \end{equation*}

    \item Under the flow Laplacian coflow~\eqref{eq: Laplacian coflow}, show that $F := \ell e^{i\theta}$ and $G$ satisfy the evolution equations:
    \begin{align*}
    \text{when $N$ is CY:}\qquad \ell = 1 &, \qquad \frac{\partial\theta}{\partial t} = - \Delta \theta, \qquad \frac{\partial G}{\partial t} = 9G |\nabla \theta|^2; \\
    \text{when $N$ is NK:} \qquad \frac{\partial \ell }{\partial t} = & -\Delta \ell + \frac{3 (1+|\nabla \ell |^2)}{\ell}, \qquad \frac{\partial\theta}{\partial t} = - \Delta \theta + \frac{\sin(6\theta)}{\ell^2}, \\ \frac{\partial G}{\partial t} = & \left(9 |\nabla \theta|^2 + \frac{3 |\sin(3\theta)|^2}{\ell^2} \right) G.
    \end{align*}
    \emph{Hint:} The rough Laplacian $\Delta=\nabla^*\nabla$ on functions is just $- \Delta_d$. It follows that for $f \in C^{\infty}(L^1)$, we have
    \begin{equation*}
    \Delta f = \frac{f''}{G^2} + \frac{6 \ell'f'}{\ell G^2} - \frac{f'G'}{G^3} \qquad \text{and} \qquad |\nabla f|^2 = \frac{(f')^2}{G^2}.
    \end{equation*}
    \end{enumerate}
\end{exercise}

\begin{exercise}
Let $(M^7, \ph)$ be a manifold with coclosed $\Gt$-structure. The $3$-form $\ph$ is called a \emph{soliton of the Laplacian coflow} if its dual $\ps = \star \ph$ satisfies the stationary equation
    \begin{equation}
    \label{eq: soliton equation}
    \Delta_d \ps = \cL_X \ps + \lambda \ps \qforq \lambda \in \R \qandq X \in \mathfrak{X}(M).
    \end{equation}
    The soliton is \emph{expanding}, \emph{steady}, or \emph{shrinking} if $\lambda>0$, $\lambda=0$ or $\lambda<0$, respectively.
    \begin{enumerate}
    \item Show that there are no compact shrinking solitons of the Laplacian coflow, and that the only compact steady solitons of~\eqref{eq: Laplacian coflow} are given by torsion-free $\Gt$-structures. 
    
    \item Given $X = s(r) \frac{\partial}{\partial r}$, for $s \in C^{\infty}(L^1)$, show that the \emph{coclosed} solitons of the form~\eqref{eq: warped_G2_struct} are precisely given by:
    \begin{align*}
    \text{when $N$ is CY:} & \qquad \lambda =0, \qquad \ell = 1, \qquad \theta = \frac{2}{3} \arctan \left( ce^{br} \right), \qquad s = b \left( \frac{1 - c^2 e^{2br}}{1 + c^2 e^{2br}} \right); \\
    \text{when $N$ is NK:}& \qquad \begin{cases} & \ell' = \cos(3\theta), \\
    & 0 = (\ell^3 \sin(3\theta))'' - 12 \ell \sin(3\theta) - \lambda \ell^3 \sin(3\theta) - (k' \ell ^3 \sin(3\theta))', \\
    & 0 = (\ell^3 \cos(3\theta))' - 3 \ell^2 - \frac{1}{4} \lambda \ell^4 - k' \ell^3 \cos(3\theta), \end{cases}
    \end{align*}
    where $k(r) = \int_{r_0}^r s(u) du$.
    \end{enumerate} 
\end{exercise}

\addcontentsline{toc}{chapter}{References}
\bibliographystyle{plain}
\bibliography{bridges-karigiannis.bib}

\begin{thebibliography}{10}

\bibitem{AWW}
Bernd Ammann, Hartmut Weiss, and Frederik Witt.
\newblock A spinorial energy functional: critical points and gradient flow.
\newblock {\em Math. Ann.}, 365(3-4):1559--1602, 2016.

\bibitem{Aubin}
Thierry Aubin.
\newblock {\em Some nonlinear problems in {R}iemannian geometry}.
\newblock Springer Monographs in Mathematics. Springer-Verlag, Berlin, 1998.

\bibitem{Bagaglini}
Leonardo Bagaglini.
\newblock The energy functional of {$G_2$}-structures compatible with a
  background metric.
\newblock {\em J. Geom. Anal.}, 31(1):346--365, 2021.

\bibitem{Bamler}
Richard~H. Bamler.
\newblock Some recent developments in {R}icci flow.
\newblock In {\em I{CM}---{I}nternational {C}ongress of {M}athematicians.
  {V}ol. 4. {S}ections 5--8}, pages 2432--2455. EMS Press, Berlin, 2023.

\bibitem{Besse}
Arthur~L. Besse.
\newblock {\em Einstein manifolds}.
\newblock Classics in Mathematics. Springer-Verlag, Berlin, 2008.
\newblock Reprint of the 1987 edition.

\bibitem{Bohm-Wilking}
Christoph B\"{o}hm and Burkhard Wilking.
\newblock Manifolds with positive curvature operators are space forms.
\newblock {\em Ann. of Math. (2)}, 167(3):1079--1097, 2008.

\bibitem{Bryant-some-remarks}
Robert~L. Bryant.
\newblock Some remarks on {$G_2$}-structures.
\newblock In {\em Proceedings of {G}\"{o}kova {G}eometry-{T}opology
  {C}onference 2005}, pages 75--109. G\"{o}kova Geometry/Topology Conference
  (GGT), G\"{o}kova, 2006.

\bibitem{Bryant-Xu}
Robert~L. Bryant and Feng Xu.
\newblock Laplacian flow for closed $\mathrm{G}_2$-structures: Short time
  behavior.
\newblock {\em arXiv:1101.2004}, 2011.

\bibitem{Cao}
Huai~Dong Cao.
\newblock Deformation of {K}\"{a}hler metrics to {K}\"{a}hler-{E}instein
  metrics on compact {K}\"{a}hler manifolds.
\newblock {\em Invent. Math.}, 81(2):359--372, 1985.

\bibitem{CCZ}
Huai-Dong Cao, Bing-Long Chen, and Xi-Ping Zhu.
\newblock Recent developments on {H}amilton's {R}icci flow.
\newblock In {\em Surveys in differential geometry. {V}ol. {XII}. {G}eometric
  flows}, volume~12 of {\em Surv. Differ. Geom.}, pages 47--112. Int. Press,
  Somerville, MA, 2008.

\bibitem{CCDMM}
Giovanni Catino, Laura Cremaschi, Zindine Djadli, Carlo Mantegazza, and Lorenzo
  Mazzieri.
\newblock The {R}icci-{B}ourguignon flow.
\newblock {\em Pacific J. Math.}, 287(2):337--370, 2017.

\bibitem{GaoChen}
Gao Chen.
\newblock Shi-type estimates and finite-time singularities of flows of {${\rm
  G}_2$} structures.
\newblock {\em Q. J. Math.}, 69(3):779--797, 2018.

\bibitem{Chow-Knopf}
Bennett Chow and Dan Knopf.
\newblock {\em The {R}icci flow: an introduction}, volume 110 of {\em
  Mathematical Surveys and Monographs}.
\newblock American Mathematical Society, Providence, RI, 2004.

\bibitem{CLN}
Bennett Chow, Peng Lu, and Lei Ni.
\newblock {\em Hamilton's {R}icci flow}, volume~77 of {\em Graduate Studies in
  Mathematics}.
\newblock American Mathematical Society, Providence, RI; Science Press Beijing,
  New York, 2006.

\bibitem{Cleyton-Ivanov}
Richard Cleyton and Stefan Ivanov.
\newblock Curvature decomposition of {$G_2$}-manifolds.
\newblock {\em J. Geom. Phys.}, 58(10):1429--1449, 2008.

\bibitem{DeTurck}
Dennis~M. DeTurck.
\newblock Deforming metrics in the direction of their {R}icci tensors.
\newblock {\em J. Differential Geom.}, 18(1):157--162, 1983.

\bibitem{DK}
S.~K. Donaldson and P.~B. Kronheimer.
\newblock {\em The geometry of four-manifolds}.
\newblock Oxford Mathematical Monographs. The Clarendon Press, Oxford
  University Press, New York, 1990.
\newblock Oxford Science Publications.

\bibitem{Dwivedi-Spin7}
Shubham Dwivedi.
\newblock A gradient flow of $\mathrm{Spin}(7)$-structures.
\newblock {\em Q. J. Math.}, 2025.

\bibitem{DGK-isometric}
Shubham Dwivedi, Panagiotis Gianniotis, and Spiro Karigiannis.
\newblock A gradient flow of isometric {$G_2$}-structures.
\newblock {\em J. Geom. Anal.}, 31(2):1855--1933, 2021.

\bibitem{DGK-flows2}
Shubham Dwivedi, Panagiotis Gianniotis, and Spiro Karigiannis.
\newblock Flows of {${\rm G}_2$}-structures, {II}: curvature, torsion, symbols,
  and functionals.
\newblock {\em Pure Appl. Math. Q.}, 21(5):2035--2186, 2025.

\bibitem{DLS}
Shubham Dwivedi, Eric Loubeau, and Henrique S\'{a}~Earp.
\newblock Harmonic flow of {${\rm Spin}(7)$}-structures.
\newblock {\em Ann. Sc. Norm. Super. Pisa Cl. Sci. (5)}, 25(1):151--215, 2024.

\bibitem{Eells-Sampson}
James Eells, Jr. and J.~H. Sampson.
\newblock Harmonic mappings of {R}iemannian manifolds.
\newblock {\em Amer. J. Math.}, 86:109--160, 1964.

\bibitem{FLMS}
Daniel Fadel, Eric Loubeau, Andr\'es~J. Moreno, and Henrique~N. S\'a{}~Earp.
\newblock Flows of geometric structures.
\newblock {\em J. Reine Angew. Math.}, 817:67--152, 2024.

\bibitem{Fine-Yao}
Joel Fine and Chengjian Yao.
\newblock Hypersymplectic 4-manifolds, the {$G_2$}-{L}aplacian flow, and
  extension assuming bounded scalar curvature.
\newblock {\em Duke Math. J.}, 167(18):3533--3589, 2018.

\bibitem{Fine-Yao-survey}
Joel Fine and Chengjian Yao.
\newblock A report on the hypersymplectic flow.
\newblock {\em Pure Appl. Math. Q.}, 15(4):1219--1260, 2019.

\bibitem{FS}
Udhav Fowdar and Henrique~N. S\'a{}~Earp.
\newblock Flows of {$\rm SU(2)$}-structures.
\newblock {\em Math. Z.}, 311(1):Paper No. 14, 2025.

\bibitem{GMM}
Olga Gil-Medrano and Peter~W. Michor.
\newblock The {R}iemannian manifold of all {R}iemannian metrics.
\newblock {\em Quart. J. Math. Oxford Ser. (2)}, 42(166):183--202, 1991.

\bibitem{Grigorian-modified}
Sergey Grigorian.
\newblock Short-time behaviour of a modified {L}aplacian coflow of
  {$G_2$}-structures.
\newblock {\em Adv. Math.}, 248:378--415, 2013.

\bibitem{Grigorian-isometric}
Sergey Grigorian.
\newblock Estimates and monotonicity for a heat flow of isometric
  {$G_2$}-structures.
\newblock {\em Calc. Var. Partial Differential Equations}, 58(5):Paper No. 175,
  37, 2019.

\bibitem{Grigorian-survey-cc}
Sergey Grigorian.
\newblock Flows of co-closed {$G_2$}-structures.
\newblock In {\em Lectures and surveys on {$\mathrm{G}_2$}-manifolds and
  related topics}, volume~84 of {\em Fields Inst. Commun.}, pages 271--286.
  Springer, New York, 2020.

\bibitem{Grigorian-survey-isometric}
Sergey Grigorian.
\newblock Isometric flows of {$G_2$}-structures.
\newblock In {\em Current trends in analysis, its applications and
  computation}, Trends Math. Res. Perspect., pages 545--553.
  Birkh\"{a}user/Springer, Cham, 2022.

\bibitem{Hamilton-3}
Richard~S. Hamilton.
\newblock Three-manifolds with positive {R}icci curvature.
\newblock {\em J. Differential Geometry}, 17(2):255--306, 1982.

\bibitem{Hitchin}
Nigel Hitchin.
\newblock The geometry of three-forms in six and seven dimensions.
\newblock {\em arXiv:math/0010054}, 2000.

\bibitem{Huybrechts}
Daniel Huybrechts.
\newblock {\em Complex geometry}.
\newblock Universitext. Springer-Verlag, Berlin, 2005.
\newblock An introduction.

\bibitem{Joyce}
Dominic~D. Joyce.
\newblock {\em Compact manifolds with special holonomy}.
\newblock Oxford Mathematical Monographs. Oxford University Press, Oxford,
  2000.

\bibitem{K-flows1}
Spiro Karigiannis.
\newblock Flows of {$G_2$}-structures. {I}.
\newblock {\em Q. J. Math.}, 60(4):487--522, 2009.

\bibitem{K-intro}
Spiro Karigiannis.
\newblock Introduction to {$G_2$} geometry.
\newblock In {\em Lectures and surveys on {$\mathrm{G}_2$}-manifolds and
  related topics}, volume~84 of {\em Fields Inst. Commun.}, pages 3--50.
  Springer, New York, 2020.

\bibitem{KMT-coflow}
Spiro Karigiannis, Benjamin McKay, and Mao-Pei Tsui.
\newblock Soliton solutions for the {L}aplacian co-flow of some {$\rm
  G_2$}-structures with symmetry.
\newblock {\em Differential Geom. Appl.}, 30(4):318--333, 2012.

\bibitem{Lambert-Lotay}
Ben Lambert and Jason~D. Lotay.
\newblock Spacelike mean curvature flow.
\newblock {\em J. Geom. Anal.}, 31(2):1291--1359, 2021.

\bibitem{Lawson-Michelsohn}
H.~Blaine Lawson, Jr. and Marie-Louise Michelsohn.
\newblock {\em Spin geometry}, volume~38 of {\em Princeton Mathematical
  Series}.
\newblock Princeton University Press, Princeton, NJ, 1989.

\bibitem{LeBrun}
Claude LeBrun.
\newblock Orthogonal complex structures on {$S^6$}.
\newblock {\em Proc. Amer. Math. Soc.}, 101(1):136--138, 1987.

\bibitem{Lee-Parker}
John~M. Lee and Thomas~H. Parker.
\newblock The {Y}amabe problem.
\newblock {\em Bull. Amer. Math. Soc. (N.S.)}, 17(1):37--91, 1987.

\bibitem{Lotay-survey}
Jason~D. Lotay.
\newblock Geometric flows of {$\mathrm{G}_2$} structures.
\newblock In {\em Lectures and surveys on {$\mathrm{G}_2$}-manifolds and
  related topics}, volume~84 of {\em Fields Inst. Commun.}, pages 113--140.
  Springer, New York, 2020.

\bibitem{LW1}
Jason~D. Lotay and Yong Wei.
\newblock Laplacian flow for closed {$\mathrm{G}_2$} structures: {S}hi-type
  estimates, uniqueness and compactness.
\newblock {\em Geom. Funct. Anal.}, 27(1):165--233, 2017.

\bibitem{LW3}
Jason~D. Lotay and Yong Wei.
\newblock Laplacian flow for closed {$\mathrm{G}_2$} structures: real
  analyticity.
\newblock {\em Comm. Anal. Geom.}, 27(1):73--109, 2019.

\bibitem{LW2}
Jason~D. Lotay and Yong Wei.
\newblock Stability of torsion-free {$\mathrm{G}_2$} structures along the
  {L}aplacian flow.
\newblock {\em J. Differential Geom.}, 111(3):495--526, 2019.

\bibitem{LMSS}
Eric Loubeau, Andr\'{e}s~J. Moreno, Henrique~N. S\'{a}~Earp, and Julieth
  Saavedra.
\newblock Harmonic {${\rm Sp}(2)$}-invariant {${\rm G}_2$}-structures on the
  7-sphere.
\newblock {\em J. Geom. Anal.}, 32(9):Paper No. 240, 49, 2022.

\bibitem{LS1}
Eric Loubeau and Henrique~N. S\'{a}~Earp.
\newblock Harmonic flow of geometric structures.
\newblock {\em Ann. Global Anal. Geom.}, 64(4):Paper No. 23, 42, 2023.

\bibitem{MM}
Carlo Mantegazza and Luca Martinazzi.
\newblock A note on quasilinear parabolic equations on manifolds.
\newblock {\em Ann. Sc. Norm. Super. Pisa Cl. Sci. (5)}, 11(4):857--874, 2012.

\bibitem{Moroianu}
Andrei Moroianu.
\newblock {\em Lectures on {K}\"{a}hler geometry}, volume~69 of {\em London
  Mathematical Society Student Texts}.
\newblock Cambridge University Press, Cambridge, 2007.

\bibitem{Picard-Suan}
S\'ebastien Picard and Caleb Suan.
\newblock Flows of {$G_2$}-structures associated to {C}alabi-{Y}au manifolds.
\newblock {\em Math. Res. Lett.}, 31(6):1837--1877, 2024.

\bibitem{Rosenberg}
Steven Rosenberg.
\newblock {\em The {L}aplacian on a {R}iemannian manifold}, volume~31 of {\em
  London Mathematical Society Student Texts}.
\newblock Cambridge University Press, Cambridge, 1997.
\newblock An introduction to analysis on manifolds.

\bibitem{Sesum}
Natasa Sesum.
\newblock Curvature tensor under the {R}icci flow.
\newblock {\em Amer. J. Math.}, 127(6):1315--1324, 2005.

\bibitem{Shi}
Wan-Xiong Shi.
\newblock Deforming the metric on complete {R}iemannian manifolds.
\newblock {\em J. Differential Geom.}, 30(1):223--301, 1989.

\bibitem{Smoczyk}
Knut Smoczyk.
\newblock A canonical way to deform a lagrangian submanifold.
\newblock {\em arXiv:dg-ga/9605005}, 1996.

\bibitem{Topping}
Peter Topping.
\newblock {\em Lectures on the {R}icci flow}, volume 325 of {\em London
  Mathematical Society Lecture Note Series}.
\newblock Cambridge University Press, Cambridge, 2006.

\bibitem{Warner}
Frank~W. Warner.
\newblock {\em Foundations of differentiable manifolds and {L}ie groups},
  volume~94 of {\em Graduate Texts in Mathematics}.
\newblock Springer-Verlag, New York-Berlin, 1983.
\newblock Corrected reprint of the 1971 edition.

\bibitem{WW2}
Hartmut Weiss and Frederik Witt.
\newblock Energy functionals and soliton equations for {$G_2$}-forms.
\newblock {\em Ann. Global Anal. Geom.}, 42(4):585--610, 2012.

\bibitem{WW1}
Hartmut Weiss and Frederik Witt.
\newblock A heat flow for special metrics.
\newblock {\em Adv. Math.}, 231(6):3288--3322, 2012.

\bibitem{Yau}
Shing~Tung Yau.
\newblock On the {R}icci curvature of a compact {K}\"{a}hler manifold and the
  complex {M}onge-{A}mp\`ere equation. {I}.
\newblock {\em Comm. Pure Appl. Math.}, 31(3):339--411, 1978.

\bibitem{Zhu}
Xi-Ping Zhu.
\newblock {\em Lectures on mean curvature flows}, volume~32 of {\em AMS/IP
  Studies in Advanced Mathematics}.
\newblock American Mathematical Society, Providence, RI; International Press,
  Somerville, MA, 2002.

\end{thebibliography}


\end{document}